\definecolor{red-}{rgb}{1.0,0.0,0.0}
\definecolor{green-}{rgb}{0.0,0.4,0.0}
\definecolor{blue-}{rgb}{0.0,0.0,1.0}
\theoremstyle{theorem}
\newtheorem{theorem}{Theorem}[section]
\newtheorem{corollary}[theorem]{Corollary}
\newtheorem{lemma}[theorem]{Lemma}
\newtheorem{proposition}[theorem]{Proposition}
\theoremstyle{definition}
\newtheorem{remark}[theorem]{Remark}
\newtheorem{definition}[theorem]{Definition}
\newtheorem{example}[theorem]{Example}
\begin{document}

\title[Skew partial derivatives and Hermite interpolation]
{On skew partial derivatives and a Hermite-type interpolation problem}

\author[J.A. Briones and A.L. Tironi]{Jonathan Armando Briones Donoso and Andrea Luigi Tironi}

\date{\today}

\address{
Universidad de Concepci\'on,
Facultad de Ciencias F\'isicas y Matem\'aticas,
Departamento de Matem\'atica,
Casilla 160-C,
Concepci\'on, Chile}
\email{atironi2006@gmail.com}
\email{jonathanbriones@udec.cl}

\subjclass[2021]{Primary: 14J70, 11G25; Secondary: 05B25. Key words and phrases: Skew polynomials, Hermite interpolation, Division ring.}

\begin{abstract}
Let $\mathcal{R}:=\mathbb{F}[{\bf x};\sigma,\delta]$ be a multivariate skew polynomial ring over a division ring $\mathbb{F}$. In this paper, we introduce the notion of right and left $(\sigma,\delta)$-partial derivatives of polynomials in $\mathcal{R}$ and we prove some of their main properties. As an application of these results, we solve in $\mathcal{R}$ a Hermite-type multivariate skew polynomial interpolation problem. The main technical tools and results used here are of constructive type, showing methods and algorithms to construct a polynomial in $\mathcal{R}$ which satisfies the above Hermite-type interpolation problem and its relative Lagrange-type version. 
\end{abstract}

\maketitle

\section*{Introduction}\label{intro}
In \cite{Ore}, O. Ore introduced univariate skew polynomial rings and principally developed the theory of these non-commutative polynomial rings over a division ring (skew field), also called in literature Ore extensions. After this work, many other authors studied the main properties of these rings and produced new results with them in many other areas, principally in abstract algebra, quantum groups (it has been observed that most of these special Hopf algebras can be expressed in the form of a skew polynomial ring), coding theory and cryptography (e.g. \cite{Jaco}, \cite{Jate1}, \cite{Maj}, \cite{BGU}, \cite{BGGRU}), extending the corresponding classical ones. 

Successively, an extension of these skew polynomial rings to the multivariate case was quite natural and a first intent was given by considering the so-called iterated skew polynomial rings (see e.g. \cite[$\S 8.8$]{CohnFree} and \cite{GU}). Unfortunately, unique remainder algorithms do not hold in general for iterated skew polynomials because they do not satisfy Jategaonkar's condition for many variables (\cite{CohnFree}, \cite{Jate2}, \cite{Len}).

Recently, in \cite{evaluationandinterpolation} the authors proposed an alternative construction by defining free multivariate skew polynomial rings and following Ore's idea, that is, the product of two monomials consists in appending them and the degree of a product of two skew polynomials is the
sum of their degrees. In particular, over fields and adding commutativity between constants and variables, the free multivariate skew polynomial rings yield the conventional free algebra as a special case. Moreover, they showed that with this new definition one can define the evaluation of any (free) skew polynomial $F:=F(x_1,\dots,x_n)$ at any point $(a_1,\dots,a_n)$ as the unique remainder of the Euclidean division of the polynomial $F$ by the skew polynomials $x_1-a_1, \dots, x_n-a_n$ on the right. Thanks to this, they were able to describe under some finiteness conditions the functions obtained by evaluating multivariate skew polynomials over division rings, a problem closely related to that of Lagrange-type interpolation which was studied previously in the univariate case in \cite{Er1, Lam, LL, LMK, Zhang}. 

\smallskip

Inspired principally by \cite{Er1}, and then by \cite{zerosmartinez}, the main purpose here is to introduce the notion of right and left $(\sigma,\delta)$-partial derivatives of multivariate skew polynomials and to solve a Hermite-type interpolation problem in the free multivariate skew polynomial rings, extending some previous results for the univariate case and for the skew Lagrange-type interpolation problem (Theorem \ref{theorem 3.1.7} and Corollary \ref{Tlagrange}).

\smallskip

The paper is organized as follows. In $\S 1$, we recall some basic notation and definitions, the main properties of the free multivariate skew polynomial ring $\mathbb{F}[{\bf{x}};\sigma,\delta]$ (Definition \ref{defR}) and we give some preliminary results. In $\S 2$, we introduce the notion of right and left $(\sigma,\delta)$-partial derivatives for free multivariate skew polynomials (Definitions \ref{Der1.1} and \ref{Der1.2.}) and we give some of their basic properties (Lemmas \ref{Der1.5} and \ref{Der1.7}). Finally, in $\S 3$, after some technical and algorithmic tools, we prove the main result of this paper, that is, a Hermite-type interpolation problem in $\mathbb{F}[{\bf{x}};\sigma,\delta]$ and its associated Lagrange-type interpolation problem as a special case. Finally, these results are specialized to the univariate case and skew Vandermonde matrices are considered to solve with a different approach the above interpolation problems in the multivariate case.

\section{Background material}\label{Sect1}

Let $\mathbb{F}$ be a division ring, that is, a unitary ring (not necessarily
commutative) in which every non-zero element is invertible in $\mathbb{F}$.  For positive integers $m$ and $n$, $\mathbb{F}^{m \times n}$ will denote the set of $m\times n$ matrices over $\mathbb{F}$. In particular,  $\mathbb{F}^{m\times 1}$ denotes the set of column vectors of length $m$ and $\mathbb{F}^{n}:=\mathbb{F}^{1\times n}$ the set of row vectors of length $n$ over $\mathbb{F}$.

\smallskip

Following \cite{evaluationandinterpolation}, we begin by recalling the free multivariate skew polynomial ring which corresponds to a multivariate generalization of the ring of skew polynomials defined by Ore in \cite{Ore}. To do this, we first need to recall the concept of $\sigma$-vector derivation.

\begin{definition}\label{def 1.2.1}{\normalfont \cite[Definition 1]{evaluationandinterpolation}} 
Given a ring homomorphism $$\sigma:\mathbb{F}\to \mathbb{F}^{n \times n},\;a \mapsto \left(
\begin{matrix}
\sigma_{1,1}(a) & \sigma_{1,2}(a) & \cdots & \sigma_{1,n}(a)\\
\sigma_{2,1}(a) & \sigma_{2,2}(a) & \cdots & \sigma_{2,n}(a) \\
\vdots & \vdots & \ddots & \vdots \\
\sigma_{n,1}(a) & \sigma_{n,2}(a) & \cdots & \sigma_{n,n}(a) 
\end{matrix}
\right), $$ we say that
$$\delta: \mathbb{F} \to \mathbb{F}^{n \times 1}\;,\;a \mapsto \left(
\begin{matrix}
\delta_{1}(a)\\
\delta_{2}(a)\\
\vdots\\
\delta_{n}(a) 
\end{matrix}
\right)$$
is a $\sigma$-\textit{vector derivation} (over $\mathbb{F}$) if it is an additive group homomorphism and satisfies $\delta(ab)=\sigma(a)\delta(b)+\delta(a)b$ for all $a, b \in \mathbb{F}$, where the maps  $\sigma_{i,j}:\mathbb{F}\to \mathbb{F}$ and $\delta_i:\mathbb{F}\to \mathbb{F}$ for $i,j\in\{1,\dots,n \}$ are the component functions of $\sigma$ and $\delta$, respectively. The ring
homomorphism $\sigma : \mathbb{F}\to \mathbb{F}^{n\times n}$
will be called a \textit{matrix morphism}.
\end{definition}

\begin{example}\label{ex 1.2.2}{\normalfont \cite[Example 4]{evaluationandinterpolation}}
Let $\sigma : \mathbb{F}\to \mathbb{F}^{n\times n}
$ be a matrix morphism and let ${\bf{v}} \in \mathbb{F}^{n\times 1}$. The map
$\delta_{{\bf{v}}}: \mathbb{F} \to \mathbb{F}^{n}
$ defined by
$\delta_{{\bf{v}}}(a) := \sigma(a){{\bf{v}}} - {{\bf{v}}} a$
for all $a \in \mathbb{F}$ is a $\sigma$-vector derivation called inner derivation. 
\end{example}

Let us give now the main basic definitions of this paper.

\begin{definition}\label{defR}
Let $x_1, x_2, \dots , x_n$ be $n$ pair-wise distinct letters, called \textit{variables},
and denote by $\mathcal{M}$ the set of all finite strings using these variables, that is, the free (non-commutative) monoid with (left) basis $x_1, x_2, \dots , x_n$. The empty string will be denoted by $o$, an element $m \in \mathcal{M}$ will be called a \textit{monomial} and its \textit{degree}, denoted by $\deg(m)$, is its length as a string. The \textit{free multivariate skew polynomial ring} over $\mathbb{F}$ in the variables $x_1, x_2, \dots , x_n$ with matrix morphism $\sigma$ and $\sigma$-vector derivation $\delta$, denoted by $$\mathcal{R}:=\mathbb{F}[\textbf{x}; \sigma, \delta]\ ,$$ 
is the free left $\mathbb{F}$-module with basis $\mathcal{M}$ and product given by appending monomials using the rule
\begin{eqnarray}
{}^t\textbf{x}\cdot a=\sigma(a){}^t\textbf{x}+\delta(a) \label{1.4}
\end{eqnarray}
for all $a \in \mathbb{F}$, where $\textbf{x}:=(
x_1,x_2,\cdots, x_n
)\in \mathcal{M}^{n}$ and ${}^t\textbf{x}$ is the transpose of $\textbf{x}$. Each
element $F(\textbf{x}) \in \mathcal{R}$ is called a \textit{free multivariate skew polynomial}, or simply skew polynomial, and can be expressed uniquely as a left linear combination 
$$F(\textbf{x})= \sum_{m \in \mathcal{M}} F_mm,$$
where $F_m \in \mathbb{F}$ are all zero except for a finite
number of monomials $m \in \mathcal{M}$.
\end{definition}

\begin{remark}
The expression \eqref{1.4} is a short form of writing $x_ia= \sum_{j=1}^{n} \sigma_{i,j}(a)x_j+\delta_i(a),$
for every $i = 1, \dots ,n$. 
\end{remark}

\begin{remark}
Consider the functions $\sigma : \mathbb{F}\to \mathbb{F}^{n\times n}
$ and $\delta : \mathbb{F}\to \mathbb{F}^{n\times 1}$ defined as $\sigma(a)=aI$ and $\delta (a)=\vec{0}$ for every $a\in\mathbb{F}$, respectively, where $I$ is the $n\times n$ identity matrix and $\vec{0}$ is the null column vector. In this case, we will write simply $\sigma = Id$ and $\delta =0$. Then, $\sigma$ is a matrix morphism, $\delta$ is a $\sigma$-vector derivation and the ring $\mathcal{R}=\mathbb{F}[\textbf{x}; Id, 0]$ is the free conventional polynomial ring in
the variables $x_1, \dots , x_n$ (see \cite[Sec. 0.11]{CohnFree}) which do not commute with each other, but commute with constants.
\end{remark}

Thanks to the lack of relations among the variables, it was proven in \cite[Lemma 5]{evaluationandinterpolation}
that for any $a_1,a_2, \dots ,a_n \in \mathbb{F}$ and every $F(\textbf{x})\in \mathcal{R}$ there exist unique $G_1(\textbf{x})$, $G_2(\textbf{x}), \dots ,$
$G_n(\textbf{x})\in \mathcal{R}$ and $b\in \mathbb{F}$ such that
\begin{equation}\label{for 1.6}
    F(\textbf{x})=\sum_{i=1}^{n}G_i(\textbf{x})(x_i-a_i)+b \ .
\end{equation}

\noindent The next result shows that one can obtain also a left-hand version of \cite[Lemma 5]{evaluationandinterpolation}.

\begin{lemma}\label{lema5bis}
Consider $a_1, \dots , a_n\in \mathbb{F}$. Then, for every {\em $F(\textbf{x}) \in \mathcal{R}$} there exist unique
{\em $G_{1,L}(\textbf{x}),\dots ,$ $G_{n,L}(\textbf{x}) \in \mathcal{R}$} and $b\in \mathbb{F}$ such that 
{\em
\begin{equation}\label{4}
    F(\textbf{x})=\sum_{i=1}^{n}(x_i-a_i)\cdot G_{i,L}(\textbf{x})+b
\end{equation}
}
if and only if the map $\varphi:\mathbb{F}^{n}\to \mathbb{F}^{n}$ defined by 
\begin{equation}\label{6}
    \varphi\left((\gamma_1,\gamma_2, \dots ,\gamma_n)\right):=\left(\sum_{j=1}^{n}\sigma_{j,1}(\gamma_j),\sum_{j=1}^{n}\sigma_{j,2}(\gamma_j), \dots ,\sum_{j=1}^{n}\sigma_{j,n}(\gamma_j)\right) 
\end{equation} 
is an isomorphism of additive groups, where $\sigma_{i,j}:\mathbb{F}\to \mathbb{F}$ are the \textit{component functions} of $\sigma:\mathbb{F}\to \mathbb{F}^{n\times n}$.
\end{lemma}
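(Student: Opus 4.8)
The plan is to mimic the proof of the right-hand version \cite[Lemma 5]{evaluationandinterpolation}, but to track carefully where the non-commutativity forces a correction term, which is exactly what the map $\varphi$ will measure. First I would reduce to the case $b=0$ and, by additivity, to the case where $F(\textbf{x})=cm$ is a single left-scaled monomial $m=x_{i_1}x_{i_2}\cdots x_{i_d}$ with $c\in\mathbb{F}$. For such an $F$ I would try to write it as $\sum_{i=1}^n (x_i-a_i)\cdot H_i(\textbf{x}) + (\text{lower-degree remainder})$ and induct on $\deg(m)=d$. When $d=0$, i.e.\ $F=c\in\mathbb{F}$, take all $G_{i,L}=0$ and $b=c$; this forces the remainder constant to be $b=c$ in general, which already pins down $b$ uniquely once existence is established. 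For $d\ge 1$, write $m=x_{i_1}\cdot m'$ with $\deg(m')=d-1$, so $F = c\,x_{i_1}m' = (x_{i_1}-a_{i_1})\cdot(cm') + a_{i_1}(cm')$; the first summand is of the desired form with $G_{i_1,L}$ containing $cm'$, and $a_{i_1}(cm')$ has degree $d-1$, so one finishes by the inductive hypothesis. This shows \emph{existence} of a decomposition \eqref{4} for every $F$, with no hypothesis on $\varphi$ at all.

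The subtle point — and where $\varphi$ enters — is \emph{uniqueness}. Subtracting two decompositions, uniqueness of \eqref{4} is equivalent to the statement that the only way to write $0 = \sum_{i=1}^n (x_i-a_i)\cdot G_{i,L}(\textbf{x}) + b$ is with all $G_{i,L}=0$ and $b=0$. I would analyze this by looking at the top-degree part: if the $G_{i,L}$ are not all zero, let $d$ be the maximum of their degrees and examine the degree-$(d+1)$ homogeneous component of $\sum_i (x_i-a_i)\cdot G_{i,L}$. Using the multiplication rule \eqref{1.4}, moving $x_i$ past the left coefficients of $G_{i,L}$ contributes lower-degree terms via $\delta$, so the degree-$(d+1)$ component is simply $\sum_i x_i\cdot (\text{top part of }G_{i,L})$, and since the $x_i$ are free letters appended on the \emph{left}, this vanishes iff the top part of each $G_{i,L}$ vanishes — a contradiction. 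Hence in fact \eqref{4} is \emph{always} unique, independently of $\varphi$, so uniqueness cannot be the condition being characterized either.

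This forces the correct reading of the statement: the content is that \eqref{4} holds \emph{with $G_{i,L}\in\mathcal{R}$ and $b\in\mathbb{F}$} — i.e.\ that the constant remainder genuinely lies in $\mathbb{F}$ — for \emph{every} $F$, and the obstruction is precisely whether the ``left division by $x_i-a_i$'' operation can be iterated all the way down to a scalar. Tracking the recursion above, at each step peeling off $x_{i_1}$ from the left produces, via the rule $x_{i_1}a = \sum_j \sigma_{i_1,j}(a)x_j + \delta_{i_1}(a)$ used in the \emph{reverse} direction, a system whose solvability for the coefficients is governed exactly by the linear map sending $(\gamma_1,\dots,\gamma_n)\mapsto(\sum_j\sigma_{j,1}(\gamma_j),\dots,\sum_j\sigma_{j,n}(\gamma_j))$: to absorb a monomial $x_k$ appearing on the \emph{right} of a would-be coefficient back into the form $(x_i-a_i)\cdot(\cdots)$ one must solve $\sum_j\sigma_{j,k}(\gamma_j)=(\text{given})$ for each $k$, and the full passage from degree $d+1$ down to degree $d$ requires $\varphi$ to be bijective. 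So the proof of the lemma would be: (i) set up this reduction and show one reduction step is possible for all $F$ if and only if $\varphi$ is an additive-group isomorphism; (ii) if $\varphi$ is an isomorphism, iterate to get \eqref{4}; (iii) conversely, if $\varphi$ fails to be bijective, exhibit an explicit $F$ — e.g.\ a suitable degree-one polynomial $\sum_k c_k x_k$ with $(c_1,\dots,c_n)$ outside the image of $\varphi$, or in its kernel on the injectivity side — for which no decomposition \eqref{4} exists. The main obstacle will be step (i): making the bookkeeping of how right-hand occurrences of the $x_k$ get reabsorbed precise enough that the linear-algebraic condition ``$\varphi$ is an isomorphism'' emerges cleanly, rather than as an opaque consequence of a messy induction. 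Finally, uniqueness of $b$ and of the $G_{i,L}$ (given existence) follows from the top-degree argument sketched in the previous paragraph, completing the proof.
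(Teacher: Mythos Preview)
Your proposal contains two genuine errors, both stemming from the same oversight: you are treating $\mathcal{R}$ as if constants commute with variables, which is exactly what the skew rule \eqref{1.4} says is \emph{not} the case.

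\textbf{Existence.} Your identity $c\,x_{i_1}m' = (x_{i_1}-a_{i_1})\cdot(cm') + a_{i_1}(cm')$ is false in $\mathcal{R}$: the right-hand side expands as $x_{i_1}\cdot c\,m' - a_{i_1}cm' + a_{i_1}cm' = x_{i_1}c\,m'$, and by \eqref{1.4} we have $x_{i_1}c = \sum_{j}\sigma_{i_1,j}(c)x_j + \delta_{i_1}(c)$, which is \emph{not} $c\,x_{i_1}$ in general. So your claim that existence holds ``with no hypothesis on $\varphi$ at all'' is unfounded. The correct inductive step is to peel off the \emph{leftmost} variable: if you already know that every degree-one polynomial $\alpha x_{i_1}$ admits a decomposition $\alpha x_{i_1} = \sum_{j}(x_j-a_j)\gamma_j + \beta$ with $\gamma_j,\beta\in\mathbb{F}$, then $\alpha x_{i_1}m' = \sum_j(x_j-a_j)\gamma_jm' + \beta m'$ and you recurse on $\beta m'$. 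But solving $\alpha x_{i_1} = \sum_j(x_j-a_j)\gamma_j + \beta$ amounts, after applying \eqref{1.4}, exactly to $\varphi(\gamma_1,\dots,\gamma_n)=\alpha\vec{e}_{i_1}$: existence for all $\alpha$ and all $i_1$ is precisely surjectivity of $\varphi$.

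\textbf{Uniqueness.} Your top-degree argument asserts that the degree-$(d{+}1)$ part of $\sum_i x_i\cdot G_{i,L}$ is obtained by prepending the free letter $x_i$ to the top part of $G_{i,L}$. Again \eqref{1.4} says otherwise: if $g_{i,m}$ is the coefficient of a top-degree monomial $m$ in $G_{i,L}$, then $x_i\cdot g_{i,m}m$ contributes $\sum_j\sigma_{i,j}(g_{i,m})\,x_jm$ to degree $d{+}1$. Hence the coefficient of $x_jm$ in the top part of $\sum_i x_iG_{i,L}$ is $\sum_i\sigma_{i,j}(g_{i,m})$, i.e.\ the $j$-th component of $\varphi\big((g_{1,m},\dots,g_{n,m})\big)$. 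This vanishes for all $j$ iff $(g_{1,m},\dots,g_{n,m})\in\ker\varphi$. So uniqueness is \emph{not} automatic: it is exactly injectivity of $\varphi$.

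Putting these together, existence of \eqref{4} for every $F$ is equivalent to surjectivity of $\varphi$, and uniqueness is equivalent to injectivity --- which is the statement of the lemma. The paper establishes this cleanly by testing the decomposition on the monomials $\alpha_k x_k$, where the resulting linear system is literally $\varphi(\gamma)=\alpha_k\vec{e}_k$; the higher-degree case then follows by the induction sketched above. Your fourth paragraph gropes toward this, but because it follows two paragraphs asserting that both existence and uniqueness are unconditional, the proposal as written is internally inconsistent and does not constitute a proof.
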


\begin{proof}
First, note that $\varphi$ is an additive group homomorphism because all the $\sigma_{i,j}$'s are additive group homomorphisms. Moreover, since $F(\textbf{x})$ is a sum of monomials, to prove the statement will be sufficient to consider only monomials of the form  $\alpha_{k}x_k$ $(\alpha_k \in \mathbb{F})$ for $k=1,2,..,n$. 

\noindent Let $\alpha:=(\alpha_1,\alpha_2, \dots ,\alpha_n)\in \mathbb{F}^{n}$ and write 
$$\alpha_kx_k=\sum_{j=1}^{n}(x_j-a_j)G_{j,k}+b_k\;\;\;(*)$$
for some $G_{j,k},b_k\in \mathbb{F}$. By \eqref{1.4}, for every $k=1, \dots ,n$, we have 
$$\alpha_k x_k=\sum_{i=1}^{n}\left(\sum_{j=1}^{n}\sigma_{j,i}(G_{j,k}) \right)x_i-\sum_{j=1}^{n}a_jG_{j,k}+\sum_{j=1}^{n}\delta_{j}(G_{j,k})+b_k\ ,$$
i.e. 
$$\alpha_k=\displaystyle{\sum_{j=1}^{n}\sigma_{j,k}(G_{j,k})}, b_k=\displaystyle{\left(\sum_{j=1}^{n}a_jG_{j,k}-\sum_{i=1}^{n}\delta_{j}(G_{j,k})\right)},\ \ \displaystyle{\sum_{j=1}^{n}\sigma_{j,i}(G_{j,k})=0}\ \ \forall i\neq k.\;(**)$$
Thus, we get
$$
\begin{array}{rcl}
 \varphi((G_{1,1},G_{2,1}, \dots ,G_{n,1})) & = & (\alpha_1,0, \dots ,0)
  \\ \varphi((G_{1,2},G_{2,2}, \dots ,G_{n,2})) & = & (0,\alpha_2, \dots ,0)
  \\ \vdots & = & \vdots
  \\ \varphi((G_{1,n},G_{2,n}, \dots ,G_{n,n})) & = & (0,0, \dots ,\alpha_n) \ .
\end{array}
$$ 
Therefore, we have $\varphi((\beta_1,\beta_2, \dots ,\beta_n))=\alpha$ for $(\beta_1,\beta_2, \dots ,\beta_n)\in \mathbb{F}^{n}$ with $\beta_i:=\sum_{j=1}^{n}G_{i,j}$ for $i=1, \dots ,n$. This proves that $\varphi$ is a surjective homomorphism. On the other hand, by the uniqueness of the $G_{j,k}$'s for all $k=1, \dots ,n$, it follows that $\varphi$ is injective and therefore an isomorphism of additive groups.
Conversely, if $\varphi$ is an isomorphism of additive groups, then given $(\alpha_1,0, \dots ,0),(0,\alpha_2, \dots ,0), \dots ,(0, \dots ,0,\alpha_n)\in\mathbb{F}^{n}$ there exist unique $G_{1,k},G_{2,k}, \dots ,G_{n,k}, b_k\in \mathbb{F}$ for all $k=1, \dots ,n$ such that $(**)$ holds and $\alpha_k x_k$ can be written as in $(*)$ in a unique manner.
\end{proof}

\begin{remark}
If $\sigma=Id$, then $\varphi$ is the identity morphism, i.e $\varphi(\vec{\gamma})=\vec{\gamma}$ for any $\vec{\gamma}\in\mathbb{F}^n$. In particular, equation \eqref{4} always holds for any $F(\textbf{x})\in \mathbb{F}[\textbf{x};Id,0]$. 
\end{remark}

\begin{remark}
For $n=1$, the condition that $\varphi$ is a group isomorphism is equivalent to require that $\sigma$ is a (ring) automorphism of $\mathbb{F}$. On the other hand, in the special case when $\sigma:\mathbb{F}\to \mathbb{F}^{n\times n}$ is a diagonal homomorphism, that is, $\sigma_{i,j}(a)=0$ for all $i\neq j$, it follows that any skew polynomial in $\mathcal{R}$ can be written as in \eqref{4} if and only if the component functions $\sigma_{i,i}:\mathbb{F}\to \mathbb{F}$ are ring automorphisms. Moreover, in this situation we can give explicit formulas to write $ax_i$ as in \eqref{4}. Indeed, for any $i=1, \dots ,n$, we have $$ax_i=(x_i-a_i)\sigma_{i,i}^{-1}(a)+a_i\sigma_{i,i}^{-1}(a)-\delta_{i}(\sigma_{i,i}^{-1}(a)).$$
\end{remark}

\smallskip

\noindent {\bf Notation}. \textit{From now on, when we will need to use \eqref{4}, we will always assume that \eqref{6} holds, that is, $\varphi$ defined as in \eqref{6} is an isomorphism of additive groups.}

\begin{remark}\label{remark0}
With the same notation as in the proof of Lemma \ref{lema5bis}, for every $k=1,\dots,n$, we have
$$\alpha_kx_k=\sum_{j=1}^{n}(x_j-a_j)G_{j,k}+b_k=$$
$$=(\textbf{x}-\textbf{a})\ {}^t\varphi^{-1}(\alpha_k\vec{e}_k)+
\textbf{a}\ {}^t\varphi^{-1}(\alpha_k\vec{e}_k)-\psi\varphi^{-1}(\alpha_k\vec{e}_k)=\textbf{x}\ {}^t\varphi^{-1}(\alpha_k\vec{e}_k)-\psi\varphi^{-1}(\alpha_k\vec{e}_k)\ ,$$
i.e. $\alpha_kx_k=\textbf{x}\ {}^t\varphi^{-1}(\alpha_k\vec{e}_k)-\psi\varphi^{-1}(\alpha_k\vec{e}_k)$, 
where $\vec{e}_k\in\mathbb{F}^n$ is the $k$-th canonical vector and $\psi:\mathbb{F}^n\to\mathbb{F}$ is such that
$$
\psi (c_1,\dots ,c_n):= \sum_{i=1}^{n}\delta_i(c_i)\ \in\ \mathbb{F}\ .
$$
Moreover, if we define 
$$\hat{(f)}
\left( 
\begin{matrix}
{}^t\textbf{b}_1\ | \
\dots \ | \
{}^t\textbf{b}_n
\end{matrix}
\right):=\left(
\begin{matrix}
{}^tf(\textbf{b}_1)\ | \
\dots \ | \
{}^tf(\textbf{b}_n)
\end{matrix}\right)
$$
for any map $f:\mathbb{F}^n\to\mathbb{F}^h$ with $h\in\mathbb{Z}_{>0}$, we can rewrite for any $\lambda\in\mathbb{F}$ the above relations $\lambda x_k=\textbf{x}\ {}^t\varphi^{-1}(\lambda\vec{e}_k)-\psi\varphi^{-1}(\lambda\vec{e}_k)$, as follows:
$$\lambda\textbf{x}=\textbf{x}\ \hat{(\varphi^{-1})}(\lambda I)-\hat{\left(\psi\varphi^{-1}\right)}(\lambda I)\ ,$$ where $I$ is the $n\times n$ identity matrix. Therefore, if \eqref{6} holds, then for any $\lambda\in\mathbb{F}$ we can write
\begin{equation}\label{lambda x}
\lambda\textbf{x}=\textbf{x}\ \tilde{\sigma}(\lambda)+\tilde{\delta}(\lambda)\ ,
\end{equation}
where
\begin{equation}\label{tilde functions}
\tilde{\sigma}(\lambda):=\hat{(\varphi^{-1})}(\lambda I)\ , \ \ \tilde{\delta}(\lambda):=-\hat{\left(\psi\varphi^{-1}\right)}(\lambda I)\ .
\end{equation}
Moreover, since for every $\lambda,\mu\in\mathbb{F}$ we have
$$\textbf{x}\tilde{\sigma}(\mu\lambda)+\tilde{\delta}(\mu\lambda)=(\mu\lambda)\textbf{x}=\mu\textbf{x}\tilde{\sigma}(\lambda)+\mu\tilde{\delta}(\lambda)=\textbf{x}\tilde{\sigma}(\mu)\tilde{\sigma}(\lambda)+\tilde{\delta}(\mu)\tilde{\sigma}(\lambda)+\mu\tilde{\delta}(\lambda)\ ,$$
we conclude that
\begin{equation}\label{tilde properties}
\tilde{\sigma}(\mu\lambda)=\tilde{\sigma}(\mu)\tilde{\sigma}(\lambda)\ , \quad \tilde{\delta}(\mu\lambda)=\tilde{\delta}(\mu)\tilde{\sigma}(\lambda)+\mu\tilde{\delta}(\lambda) \ .
\end{equation}
Finally, if $\sigma = Id$, $\delta=0$, then we deduce that $\tilde{\sigma}=Id$, $\tilde{\delta}=0$ and $\lambda\textbf{x}=\textbf{x}\lambda$ for any $\lambda\in\mathbb{F}$.
\end{remark}

\bigskip

We can now define the right (left) $(\sigma, \delta)$-evaluation of every $F(\textbf{x})\in\mathcal{R}$ at any $\textbf{a}\in\mathbb{F}^{n}$ as follows (see also \cite[Definition 9]{evaluationandinterpolation}).

\begin{definition}\label{def-eval}
Take $\textbf{a} = (a_1, a_2,\cdots, a_n) \in \mathbb{F}^n$ and consider any skew polynomial $F(\textbf{x})\in \mathcal{R}$. We define the \textit{right (left) $(\sigma, \delta)$-evaluation of} $F(\textbf{x})\in\mathcal{R}$ \textit{at} $\textbf{a}$, denoted by $F(\textbf{a})$ ($F_L(\textbf{a})$), as the unique constant $b\in \mathbb{F}$ of \eqref{for 1.6} (of \eqref{4} if $\varphi$, defined as in \eqref{6}, is an isomorphism of additive groups). In particular, we denote by $N_{m}({\bf a})\ \left( M_m({\bf a})\right)$ the right (left) $(\sigma,\delta)$-evaluation of any $m\in\mathcal{M}$.
\end{definition}

\begin{remark}
Consider $\lambda,\mu\in\mathbb{F}$ and $F(\textbf{x}), G(\textbf{x}),\in\mathcal{R}$. For any $\textbf{a}\in\mathbb{F}^n$, from Definition \ref{def-eval} and the above properties, it follows that
$$\left( \lambda F(\textbf{x})+\mu G(\textbf{x})\right)(\textbf{a})=\lambda F(\textbf{a})+\mu G(\textbf{a})\ , \ \left( F(\textbf{x})\lambda+ G(\textbf{x})\mu \right)_L(\textbf{a})= F_L(\textbf{a})\lambda+ G_L(\textbf{a})\mu\ .$$ 
\end{remark}

\begin{example}
Consider $\mathbb{F}[x_1,x_2;Id,0]$ with $\mathbb{F}$ a division ring such that $ba\neq ab$ for some $a,b\in\mathbb{F}$. Take $F(x_1,x_2):=(x_1-a)bx_2\in\mathbb{F}[x_1,x_2;Id,0]$. Note that we can also write $F(x_1,x_2)$ as
$$F(x_1,x_2)= (bx_1-ab)(x_2-1)+b(x_1-a)+(ba-ab) \ .$$
This gives $F_L(a,1)=0$ and $F(a,1)=ba-ab\neq 0$, that is, $F_L(a,1)\neq F(a,1)$.
\end{example}

The following result allows one to compute the right (left) $(\sigma, \delta)$-evaluation of any monomial $m\in \mathcal{M}$ at any point $\textbf{a}\in \mathbb{F}^n$ by using induction with the so-called fundamentals functions $N_{m}:\mathbb{F}^n \to \mathbb{F}$ ($M_{m}:\mathbb{F}^n \to \mathbb{F}$) for any $m\in \mathcal{M}$. 

\begin{theorem}
Consider a monomial $m\in\mathcal{M}$ and a point ${\bf{a}}\in \mathbb{F}^{n}$. Then the following properties hold:
\begin{enumerate}
\item[$1)$] {\em \cite[Theorem 2]{evaluationandinterpolation}}
it holds that
$$N_{{\bf{x}}m}({\bf{a}}):=
\left( 
\begin{matrix}
N_{x_1m}({\bf{a}})\\
N_{x_2m}({\bf{a}})\\
\vdots\\
N_{x_nm}({\bf{a}})
\end{matrix}
\right)=\sigma(N_m({\bf{a}}))\ {}^t{\bf{a}}+\delta(N_m({\bf{a}}))\ ;
$$
\item[$2)$] 
using the same notation as in {\em Remark \ref{remark0}}, it holds that
$$M_{m{\bf{x}}}({\bf{a}}):=
\left( 
\begin{matrix}
M_{mx_1}({\bf{a}}), 
M_{mx_2}({\bf{a}}), \dots ,
M_{mx_n}({\bf{a}})
\end{matrix}
\right)={\bf{a}}\ \tilde{\sigma}(M_m({\bf{a}}))+\tilde{\delta}(M_m({\bf{a}}))\ .$$
\end{enumerate}
\end{theorem}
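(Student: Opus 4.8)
The plan is to prove the two formulas by peeling off the outermost variable from the monomial and reducing to the already-established one-variable relations \eqref{1.4} and \eqref{lambda x}. For part $1)$, write $m = x_{i_1}x_{i_2}\cdots x_{i_r}$ and recall that, by definition, $N_m(\textbf{a})$ is the unique constant $b$ with $m = \sum_j G_j(\textbf{x})(x_j-a_j)+b$ under right division by the $x_j - a_j$. First I would observe that if $m = o$ is empty then $N_o(\textbf{a}) = 1$ and $N_{x_k}(\textbf{a}) = a_k$, so the claimed identity reads ${}^t\textbf{a} = \sigma(1)\,{}^t\textbf{a} + \delta(1) = {}^t\textbf{a}$, which holds. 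For the inductive step, multiply \eqref{for 1.6} (written for the monomial $m$, so $m = \sum_i G_i(\textbf{x})(x_i-a_i) + N_m(\textbf{a})$) on the left by ${}^t\textbf{x}$, obtaining
$${}^t\textbf{x}\,m = \sum_i \big({}^t\textbf{x}\,G_i(\textbf{x})\big)(x_i - a_i) + {}^t\textbf{x}\,N_m(\textbf{a}).$$
The first sum already has the correct shape (it lies in $\sum_i \mathcal{R}(x_i-a_i)$), so the right $(\sigma,\delta)$-evaluation of ${}^t\textbf{x}\,m$ — i.e.\ the column vector $N_{\textbf{x}m}(\textbf{a})$ — equals the right evaluation of the remaining term ${}^t\textbf{x}\,N_m(\textbf{a})$. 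But $N_m(\textbf{a}) \in \mathbb{F}$ is a constant, so by \eqref{1.4} we have ${}^t\textbf{x}\,N_m(\textbf{a}) = \sigma(N_m(\textbf{a}))\,{}^t\textbf{x} + \delta(N_m(\textbf{a}))$, whose right evaluation at $\textbf{a}$ is $\sigma(N_m(\textbf{a}))\,{}^t\textbf{a} + \delta(N_m(\textbf{a}))$ by the entrywise $\mathbb{F}$-linearity of right evaluation and the fact that $N_{x_k}(\textbf{a}) = a_k$. This gives exactly the asserted formula; the only subtle point is to justify that passing from a decomposition $F = \sum_i H_i(x_i-a_i) + b$ to $b$ is well-defined and additive/left-homogeneous entrywise, which is immediate from uniqueness in \eqref{for 1.6} and the linearity remark already recorded. (Strictly speaking this is essentially the content of \cite[Theorem 2]{evaluationandinterpolation}, so for part $1)$ one may simply cite it.)

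For part $2)$, the argument is the mirror image but now using the left-division decomposition \eqref{4} and the relation \eqref{lambda x}, $\lambda\textbf{x} = \textbf{x}\,\tilde\sigma(\lambda) + \tilde\delta(\lambda)$, in place of \eqref{1.4}. Here I would write $M_m(\textbf{a})$ for the unique constant $b$ with $m = \sum_i (x_i - a_i)\,G_{i,L}(\textbf{x}) + b$; note $M_o(\textbf{a}) = 1$ and $M_{x_k}(\textbf{a}) = a_k$. Multiply \eqref{4} for the monomial $m$ on the \emph{right} by $\textbf{x}$ (a row of variables), producing
$$m\,\textbf{x} = \sum_i (x_i - a_i)\,\big(G_{i,L}(\textbf{x})\,\textbf{x}\big) + M_m(\textbf{a})\,\textbf{x}.$$
The sum again lies in $\sum_i (x_i-a_i)\mathcal{R}$ (entrywise), so the left $(\sigma,\delta)$-evaluation row vector $M_{m\textbf{x}}(\textbf{a})$ coincides with the left evaluation of the constant-times-row term $M_m(\textbf{a})\,\textbf{x}$. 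Applying \eqref{lambda x} with $\lambda = M_m(\textbf{a})$ gives $M_m(\textbf{a})\,\textbf{x} = \textbf{x}\,\tilde\sigma(M_m(\textbf{a})) + \tilde\delta(M_m(\textbf{a}))$, and evaluating on the left at $\textbf{a}$ — using right-$\mathbb{F}$-linearity of left evaluation from the remark after Definition \ref{def-eval} and $M_{x_k}(\textbf{a}) = a_k$ — yields $\textbf{a}\,\tilde\sigma(M_m(\textbf{a})) + \tilde\delta(M_m(\textbf{a}))$, which is the claimed identity.

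The main thing to be careful about — and the only real obstacle — is the bookkeeping for the left-hand case: one must check that multiplying the identity \eqref{4} on the right by $\textbf{x}$ keeps each summand $(x_i-a_i)\,G_{i,L}(\textbf{x})\,\textbf{x}$ inside the left ideal generated by the $x_i-a_i$, so that these terms genuinely contribute $0$ to the left evaluation, and that \eqref{lambda x} is applicable (this is exactly where the standing assumption that $\varphi$ in \eqref{6} is an additive isomorphism is used, as recorded in the Notation preceding Remark \ref{remark0}). Once that is in place, the computation is a direct substitution using \eqref{lambda x}–\eqref{tilde functions}. I also want to state explicitly at the outset that left evaluation is additive and right-$\mathbb{F}$-linear componentwise, so that ``take the remainder'' commutes with the finite sums and scalar factors appearing above; this follows from the uniqueness in \eqref{4} together with the linearity remark already in the text. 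No induction on $\deg(m)$ is actually needed — a single peeling of the outer variable suffices for each part, with the empty-monomial case serving as the base verification.
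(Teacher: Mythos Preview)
Your proposal is correct and follows essentially the same approach as the paper: for part $2)$ (the only part the paper proves, since part $1)$ is cited) the paper also starts from the left decomposition $m=\sum_j (x_j-a_j)G_j(\textbf{x})+M_m(\textbf{a})$, multiplies on the right by $x_i$, and reduces the constant term via the identity from Remark~\ref{remark0}. The only cosmetic difference is that the paper works one coordinate $x_i$ at a time with the explicit formulas $\lambda x_i=\textbf{x}\,{}^t\varphi^{-1}(\lambda\vec{e}_i)-\psi\varphi^{-1}(\lambda\vec{e}_i)$ and then invokes Remark~\ref{remark0}, whereas you package the same computation row-wise using \eqref{lambda x} directly.
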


\begin{proof}
Assume we are in case $2)$. Write $m=\sum_{j=1}^{n}(x_j-a_j)G_j(\textbf{x})+M_{m}(\textbf{a})$. For every $i=1,\dots,n$, we have 
$$mx_i=\sum_{j=1}^{n}(x_j-a_j)G_j(\textbf{x})x_i+M_{m}(\textbf{a})x_i=$$
$$=\sum_{j=1}^{n}(x_j-a_j)G_j(\textbf{x})x_i+\textbf{x}\ {}^t\varphi^{-1}(M_{m}(\textbf{a})\vec{e}_i)-\psi\varphi^{-1}(M_{m}(\textbf{a})\vec{e}_i)=$$
$$=\sum_{j=1}^{n}(x_j-a_j)G_j(\textbf{x})x_i+(\textbf{x}-\textbf{a})\ {}^t\varphi^{-1}(M_{m}(\textbf{a})\vec{e}_i)+\textbf{a}\ {}^t\varphi^{-1}(M_{m}(\textbf{a})\vec{e}_i)-\psi\varphi^{-1}(M_{m}(\textbf{a})\vec{e}_i)\ ,$$
i.e. $M_{mx_i}(\textbf{a})=\textbf{a}\ {}^t\varphi^{-1}(M_{m}(\textbf{a})\vec{e}_i)-\psi\varphi^{-1}(M_{m}(\textbf{a})\vec{e}_i)$. Therefore the statement $2)$ follows from Remark \ref{remark0}.
\end{proof}

Now, to define properly the evaluation of a product of two skew polynomials, we first need the following notion. 

\begin{definition}\label{def 1.2.10}
Consider $\textbf{a} \in \mathbb{F}^{n}$ and $c \in \mathbb{F}^{*}$. Then
\begin{enumerate}
\item[$1)$] \normalfont{\cite[Definition 11]{evaluationandinterpolation}} the \textit{right $(\sigma,\delta)$-conjugate of $\bf{a}$ with respect to $c$} is
$${\textbf{a}}^c := {}^t\left(\sigma(c)\ {}^t\textbf{a}c^{-1} + \delta(c)c^{-1}\right)\in \mathbb{F}^{n}\ ;$$
\item[$2)$] the \textit{left $(\sigma,\delta)$-conjugate of $\bf{a}$ with respect to $c$} is
$${}^c \textbf{a}:=c^{-1}\textbf{a}\tilde{\sigma}(c)+c^{-1}\tilde{\delta}(c)\in \mathbb{F}^{n}\ .$$
\end{enumerate}
\end{definition}

The following properties extend the well-known ones for the case $n = 1$. 

\begin{lemma}\label{lemgeneralizado}
Given ${\bf{a}},{\bf{b}} \in \mathbb{F}^{n}$ and $c,d \in \mathbb{F}^{*}$, the following properties hold:
\begin{itemize}
    \item[$1)$] \cite[Lemma 12]{evaluationandinterpolation} ${\bf{a}}^1={\bf{a}}$, $({\bf{a}}^{c})^{d}={\bf{a}}^{dc}$ and the relation $\sim$ defined on $\mathbb{F}^{n}$ as
    
\smallskip    

 \centerline{${\bf{a}} \sim {\bf{b}}$ $\iff\ \exists\ e \in \mathbb{F}^{*}$ such that ${\bf{b}}={\bf{a}}^{e}$,} 

\smallskip    

\noindent is an equivalence relation on $\mathbb{F}^{n}$;
\item[$2)$] ${}^1{\bf{a}}={\bf{a}}$, ${}^d({}^c {\bf{a}})={}^{cd}{\bf{a}}$ and the relation $\sim_L$ defined on $\mathbb{F}^{n}$ as
    
\smallskip    

 \centerline{${\bf{a}} \sim_L {\bf{b}}$ $\iff\ \exists\ e \in \mathbb{F}^{*}$ such that ${\bf{b}}={}^{e}\bf{a}$,} 

\smallskip    

\noindent is an equivalence relation on $\mathbb{F}^{n}$.
\end{itemize}
\end{lemma}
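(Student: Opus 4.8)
The plan is to mirror, on the left side, the proof of part~$1)$, which is the corresponding statement for right $(\sigma,\delta)$-conjugates. First I would verify ${}^1\mathbf{a}=\mathbf{a}$ directly from the definition: substituting $c=1$ into ${}^c\mathbf{a}=c^{-1}\mathbf{a}\tilde\sigma(c)+c^{-1}\tilde\delta(c)$ and using that $\tilde\sigma(1)=I$ and $\tilde\delta(1)=0$ (both of which follow from the multiplicativity relations \eqref{tilde properties}, since $\tilde\sigma(1)=\tilde\sigma(1)\tilde\sigma(1)$ forces $\tilde\sigma(1)=I$ as $\tilde\sigma(1)$ must be invertible, and then $\tilde\delta(1)=\tilde\delta(1)\tilde\sigma(1)+1\cdot\tilde\delta(1)$ gives $\tilde\delta(1)=0$).

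Next I would establish the composition law ${}^d({}^c\mathbf{a})={}^{cd}\mathbf{a}$ by a direct computation. Set $\mathbf{b}:={}^c\mathbf{a}=c^{-1}\mathbf{a}\tilde\sigma(c)+c^{-1}\tilde\delta(c)$ and compute ${}^d\mathbf{b}=d^{-1}\mathbf{b}\tilde\sigma(d)+d^{-1}\tilde\delta(d)$. Expanding,
$$
{}^d\mathbf{b}=d^{-1}\bigl(c^{-1}\mathbf{a}\tilde\sigma(c)+c^{-1}\tilde\delta(c)\bigr)\tilde\sigma(d)+d^{-1}\tilde\delta(d)=(cd)^{-1}\mathbf{a}\tilde\sigma(c)\tilde\sigma(d)+(cd)^{-1}\tilde\delta(c)\tilde\sigma(d)+d^{-1}\tilde\delta(d)\ .
$$
Using \eqref{tilde properties} we have $\tilde\sigma(c)\tilde\sigma(d)=\tilde\sigma(cd)$ and $\tilde\delta(cd)=\tilde\delta(c)\tilde\sigma(d)+c\tilde\delta(d)$, so $\tilde\delta(c)\tilde\sigma(d)=\tilde\delta(cd)-c\tilde\delta(d)$; substituting gives
$$
{}^d\mathbf{b}=(cd)^{-1}\mathbf{a}\tilde\sigma(cd)+(cd)^{-1}\tilde\delta(cd)-(cd)^{-1}c\tilde\delta(d)+d^{-1}\tilde\delta(d)=(cd)^{-1}\mathbf{a}\tilde\sigma(cd)+(cd)^{-1}\tilde\delta(cd)\ ,
$$
since $(cd)^{-1}c=d^{-1}$. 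This is exactly ${}^{cd}\mathbf{a}$, as desired.

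Finally I would deduce that $\sim_L$ is an equivalence relation. Reflexivity follows from ${}^1\mathbf{a}=\mathbf{a}$. For transitivity, if $\mathbf{b}={}^c\mathbf{a}$ and $\mathbf{c}'={}^d\mathbf{b}$ then $\mathbf{c}'={}^d({}^c\mathbf{a})={}^{cd}\mathbf{a}$ with $cd\in\mathbb{F}^{*}$. For symmetry, if $\mathbf{b}={}^c\mathbf{a}$ then applying the composition law with $d=c^{-1}$ gives ${}^{c^{-1}}\mathbf{b}={}^{c^{-1}}({}^c\mathbf{a})={}^{cc^{-1}}\mathbf{a}={}^1\mathbf{a}=\mathbf{a}$, so $\mathbf{a}={}^{c^{-1}}\mathbf{b}$. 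The only mild subtlety — and the one place worth being careful — is the noncommutativity bookkeeping in the expansion above, in particular keeping the order $(cd)^{-1}=d^{-1}c^{-1}$ straight and correctly applying the left-hand Leibniz rule for $\tilde\delta$ from \eqref{tilde properties} rather than a commutative analogue; once the identity $\tilde\delta(c)\tilde\sigma(d)=\tilde\delta(cd)-c\tilde\delta(d)$ is in hand, everything collapses cleanly. Since $\tilde\sigma,\tilde\delta$ satisfy the same formal identities \eqref{tilde properties} as $\sigma,\delta$ do, the argument is the verbatim left-sided analogue of the proof of part~$1)$, so I would keep the write-up brief, noting the analogy and recording only the composition-law computation in detail.
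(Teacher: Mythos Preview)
Your proposal is correct and follows essentially the same route as the paper: the composition-law computation ${}^d({}^c\mathbf{a})={}^{cd}\mathbf{a}$ is line-for-line the paper's, and the equivalence-relation consequences are immediate. The one soft spot is your derivation of $\tilde\sigma(1)=I$: from $\tilde\sigma(1)^2=\tilde\sigma(1)$ you need $\tilde\sigma(1)$ invertible, which you assert without justification; the paper instead reads $\tilde\sigma(1)=I$ and $\tilde\delta(1)=0$ directly from the definitions by checking $\varphi(\vec e_i)=\vec e_i$ and $\psi(\vec e_i)=0$ (using $\sigma(1)=I$, $\delta_j(1)=0$), which is cleaner—alternatively, just compare coefficients in $1\cdot\mathbf{x}=\mathbf{x}\tilde\sigma(1)+\tilde\delta(1)$.
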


\begin{proof}
To prove $2)$, first note that $\varphi (\vec{e}_i)=\vec{e}_i$ and $\psi(\vec{e}_i)=0$ for every $i=1,\dots,n$, because $\sigma(1)=I, \sigma(0)=0$ and $\delta_j(1)=\delta_j(0)=0$ for any $j=1,\dots,n$. This shows that ${}^1{\bf{a}}={\bf{a}}$. Moreover, by Remark \ref{remark0} we obtain that
$${}^d({}^c {\bf{a}})={}^d( c^{-1}\textbf{a}\tilde{\sigma}(c)+c^{-1}\tilde{\delta}(c) )=d^{-1}( c^{-1}\textbf{a}\tilde{\sigma}(c)+c^{-1}\tilde{\delta}(c) )\tilde{\sigma}(d)+d^{-1}\tilde{\delta}(d)=$$
$$=d^{-1}c^{-1}\textbf{a}\tilde{\sigma}(c)\tilde{\sigma}(d)+d^{-1}c^{-1}\tilde{\delta}(c)\tilde{\sigma}(d)+d^{-1}\tilde{\delta}(d)=(cd)^{-1}\textbf{a}\tilde{\sigma}(cd)+(cd)^{-1}\tilde{\delta}(cd)\ , $$
i.e. ${}^d({}^c {\bf{a}})={}^{(cd)} {\bf{a}}$.
\end{proof}

\begin{definition}
Let ${\bf{a}}\in \mathbb{F}^{n}$. We define the \textit{right (left) $(\sigma,\delta)$-conjugacy class $[{\bf{a}}] \ \left([{\bf{a}}]_L\right)$ of ${\bf{a}}$} as the set
$$[{\bf{a}}]:=\{ {\bf{b}}\in\mathbb{F}^n\ | \ {\bf{b}}\sim {\bf{a}}\} \ \ \ \left( [{\bf{a}}]_L:=\{ {\bf{b}}\in\mathbb{F}^n\ | \ {\bf{b}}\sim_L {\bf{a}}\} \right)\ .$$
\end{definition}

\smallskip

Finally, using the notion of the right (left) $(\sigma,\delta)$-conjugation, we can give the next result that allows one to evaluate on the right (left) a product of two any skew polynomials.

\begin{theorem} \label{Productrule}
Consider two skew polynomials $F({\bf{x}}),G({\bf{x}}) \in \mathcal{R}$ and $\bf{a} \in \mathbb{F}^{n}$. 
\begin{enumerate}
\item[$1)$] \cite[Theorem 3]{evaluationandinterpolation} if $G({\bf{a}}) = 0$, then $(FG({\bf{x}}))({\bf{a}}) = 0$; if $G({\bf{a}}) \neq 0$ then $$(FG({\bf{x}}))({\bf{a}}) = F\left({\bf{a}}^{G({\bf{a}})}\right)\ G({\bf{a}})\ ;$$
\item[$2)$] if $G_L({\bf{a}}) = 0$, then ${\left((GF)({\bf{x}})\right)}_L({\bf{a}}) = 0$; if $G_L({\bf{a}}) \neq 0$ then $${\left((GF)({\bf{x}})\right)}_L({\bf{a}}) = G_L({\bf{a}})\ F_L\left( {}^{G_L({\bf{a}})}\bf{a}\right)\ .$$
\end{enumerate}
\end{theorem}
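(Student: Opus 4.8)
The plan is to prove part $2)$ and observe that part $1)$ is \cite[Theorem 3]{evaluationandinterpolation}. Throughout I will use the left-hand division \eqref{4}, which is legitimate under our standing assumption that $\varphi$ in \eqref{6} is an additive-group isomorphism, together with the basic linearity of the left evaluation $H \mapsto H_L(\mathbf{a})$ and the left-conjugation identities of Lemma \ref{lemgeneralizado}$(2)$ and Remark \ref{remark0}. The strategy mirrors the standard univariate argument: write $G(\mathbf{x})$ via its left division by the $x_i - a_i$, namely
\begin{equation}\label{eq:Gdecomp}
G(\mathbf{x}) = \sum_{i=1}^{n} (x_i - a_i)\, G_{i,L}(\mathbf{x}) + G_L(\mathbf{a}),
\end{equation}
then left-multiply \eqref{eq:Gdecomp} on the \emph{right} by $F(\mathbf{x})$ — wait; rather, since we need $(GF)(\mathbf{x})$, multiply \eqref{eq:Gdecomp} on the right by $F(\mathbf{x})$ to get $GF = \sum_i (x_i-a_i) G_{i,L}(\mathbf{x}) F(\mathbf{x}) + G_L(\mathbf{a}) F(\mathbf{x})$. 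The first sum lies in $\sum_i (x_i-a_i)\mathcal{R}$, hence its left evaluation at $\mathbf{a}$ is $0$ by uniqueness in Lemma \ref{lema5bis}. So by linearity of left evaluation, ${\left((GF)(\mathbf{x})\right)}_L(\mathbf{a}) = {\left(G_L(\mathbf{a})\, F(\mathbf{x})\right)}_L(\mathbf{a})$.

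This reduces everything to evaluating $c\, F(\mathbf{x})$ on the left at $\mathbf{a}$, where $c := G_L(\mathbf{a})$. If $c = 0$ the result is $0$, giving the first case. If $c \neq 0$, I would prove the key identity
\begin{equation}\label{eq:cF}
{\left(c\, F(\mathbf{x})\right)}_L(\mathbf{a}) = c\, F_L\!\left({}^{c}\mathbf{a}\right)
\end{equation}
by reducing to monomials (left evaluation being left-$\mathbb{F}$-linear in $F$) and then inducting on $\deg(m)$. The base case $m = o$ is immediate. For the inductive step, write $m = x_k m'$ and push $c$ past $x_k$ using \eqref{lambda x}: $c\, x_k = \bigl(\mathbf{x}\,\tilde\sigma(c)\bigr)_k + \tilde\delta(c)_k$, so that $c\, m = c\, x_k m' = \sum_j \bigl(\tilde\sigma(c)\bigr)_{jk} x_j m' + \bigl(\tilde\delta(c)\bigr)_k m'$; here one must be careful that \eqref{lambda x} was stated for the row vector $\lambda\mathbf{x}$, so $\lambda x_k$ is its $k$-th component. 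Iterating this rewriting (or arguing directly from \eqref{tilde properties}, which says $\tilde\sigma,\tilde\delta$ behave exactly like a matrix morphism and its vector derivation) lets one commute $c$ all the way to the right through the monomial, producing a skew polynomial whose left evaluation at $\mathbf{a}$ one computes, and \eqref{tilde properties} guarantees the $c^{-1}\mathbf{a}\tilde\sigma(c) + c^{-1}\tilde\delta(c)$ pattern — i.e. ${}^{c}\mathbf{a}$ — emerges. The cleaner route, which I would actually follow, is a direct computation: from \eqref{eq:Gdecomp} applied to $F$ at the point ${}^{c}\mathbf{a}$, and the left-conjugation relation $c\,(x_i - {}^{c}a_i)\cdot(\text{stuff}) = (\mathbf{x}-\mathbf{a})$-combination $\cdot c \cdot (\text{stuff})$ obtained by clearing denominators in Definition \ref{def 1.2.10}$(2)$, one sees $c\, F(\mathbf{x}) \in \sum_i (x_i-a_i)\mathcal{R} + c\, F_L({}^{c}\mathbf{a})$, whence \eqref{eq:cF} by uniqueness.

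The main obstacle is the bookkeeping in the non-commutative multivariate setting: unlike $n=1$, one cannot simply "substitute" $x_i - a_i$ by a scalar multiple of $x_i - {}^{c}a_i$, because pushing the constant $c$ through $x_i$ introduces \emph{all} the variables $x_1,\dots,x_n$ via $\tilde\sigma(c)$ and also a derivation term $\tilde\delta(c)$. The relations \eqref{tilde properties} are precisely what make the induction close — they are the multivariate analogue of $\sigma$ being an automorphism with its twisted Leibniz rule — so the real content is verifying that the left-conjugate ${}^{c}\mathbf{a}$ of Definition \ref{def 1.2.10}$(2)$ is exactly the point at which the residual constant $c\cdot(\text{left value of }F)$ is realized. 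I expect the write-up to consist of: (i) the reduction via \eqref{eq:Gdecomp}; (ii) the reduction to $c\,F(\mathbf{x})$ with $c\neq 0$; (iii) the identity \eqref{eq:cF}, proved either by the monomial induction using \eqref{lambda x}–\eqref{tilde properties} or by the "$c\,(x_i-{}^{c}a_i)$" manipulation from Definition \ref{def 1.2.10}$(2)$; and (iv) assembling the pieces. Part $1)$ needs only to be cited.
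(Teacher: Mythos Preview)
Your proposal is correct and the ``cleaner route'' you say you would actually follow---writing $F$ via left division at $\mathbf{b}:={}^{c}\mathbf{a}$ and then using the identity $c(\mathbf{x}-\mathbf{b})=(\mathbf{x}-\mathbf{a})\tilde\sigma(c)$ (which is exactly what ``clearing denominators'' in Definition~\ref{def 1.2.10}(2) gives, via \eqref{lambda x}) to push $c\sum_i(x_i-b_i)F_i$ into $\sum_j(x_j-a_j)\mathcal{R}$---is precisely the paper's argument. One small caveat on your alternative monomial route: left evaluation is \emph{right}-$\mathbb{F}$-linear, not left-linear, so the reduction to monomials works by additivity together with Lemma~\ref{lemgeneralizado}(2) (to merge ${}^{\lambda}({}^{c}\mathbf{a})={}^{c\lambda}\mathbf{a}$), not by the linearity you cite.
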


\begin{proof}
Suppose that $G_L({\bf{a}}) = 0$. Then $G(\textbf{x})=\sum_{i=1}^{n}(x_i-a_i)G_i(\textbf{x})$ and this gives $G(\textbf{x})F(\textbf{x})=\sum_{i=1}^{n}(x_i-a_i)\left(G_i(\textbf{x})F(\textbf{x})\right)$, that is, ${\left((GF)(\textbf{x})\right)}_L({\bf{a}}) =\left(G(\textbf{x})F(\textbf{x})\right)_L(\textbf{a})=0$. Assume now that $G_L({\bf{a}}) \neq 0$. Then we have
$$G(\textbf{x})F(\textbf{x}) = \left(\sum_{i=1}^{n}(x_i-a_i)G_i(\textbf{x})\right) F(\textbf{x}) + G_L(\textbf{a})F(\textbf{x})= $$
$$= \left(\sum_{i=1}^{n}(x_i-a_i)G_i(\textbf{x}) F(\textbf{x})\right) + G_L(\textbf{a}) \left(\sum_{i=1}^{n}(x_i-b_i)F_i(\textbf{x})\right)
+ G_L(\textbf{a})F_L(\bf{b})\ ,$$
where $\textbf{b}={}^{G_L({\bf{a}})}\bf{a}$ and $b_i:=\left( {}^{G_L({\bf{a}})}\bf{a}\right)_i$ for any $i=1,\dots,n$. By \eqref{lambda x}, we obtain that 
$$G_L({\bf{a}})\textbf{x}= \textbf{x}\tilde{\sigma}\left( G_L({\bf{a}}) \right)+\tilde{\delta}\left( G_L({\bf{a}}) \right)\ , \ \ G_L({\bf{a}}){}^{G_L({\bf{a}})}\textbf{a}=\textbf{a}\tilde{\sigma}\left( G_L({\bf{a}}) \right)+\tilde{\delta}\left( G_L({\bf{a}}) \right)\ .$$
Thus, we get $G_L({\bf{a}})(\textbf{x}-\textbf{a})=(\textbf{x}-\textbf{a})\tilde{\sigma}(G_L({\bf{a}}))$ and this gives 
$\left(G(\textbf{x})F(\textbf{x})\right)_L(\textbf{a})=G_L(\textbf{a})F_L({\bf{b}} )=G_L(\textbf{a})F_L( {}^{G_L({\bf{a}})}{\bf{a}} )$.
\end{proof}

\section{$(\sigma,\delta)$-Partial derivatives}\label{Sect2}

Based on properties of the ring $\mathcal{R}:=\mathbb{F}[\textbf{x};\sigma,\delta]$ and some results of the previous section, we can introduce the notion of right (left) $(\sigma,\delta)$-partial derivatives for multivariate skew polynomials that extend the right (left) univariate derivatives given in \cite[Definition 3.37]{resultantepaper}. Moreover, from now on, to deal with all the left division cases, we will always assume implicitly that $\varphi$, defined as in \eqref{6}, is an isomorphism of additive groups. 

\smallskip

Consider $F(\textbf{x})\in \mathcal{R}$ and fix, for instance, the graded lexicographic ordering $\prec$ ($\prec_L$) from right to left (from left to right) in the set $\mathcal{M}$ of monomials with $1\prec x_1\prec \dots \prec x_n$ ($1\prec_L x_1\prec_L \dots \prec_L x_n$). The existence of each $G_{i}(\textbf{x}) \ \left( G_{i,L}(\textbf{x})\right)$ as in \eqref{for 1.6} (\eqref{4}) is provided as usual  by a right (left) multivariate division algorithm. On the other hand, with respect to $\prec$ ($\prec_L$), by \cite[Lemma 5]{evaluationandinterpolation} (Lemma \ref{lema5bis}) such polynomials $G_{i}(\textbf{x}) \ \left( G_{i,L}(\textbf{x})\right)$ are unique for all $i=1, \dots ,n$.

\smallskip

This allows us to introduce the concept of right (left) $(\sigma,\delta)$-partial derivatives of any free multivariate skew polynomial which will be useful to prove the main results of the next section.

\begin{definition}\label{Der1.1}
Let $F(\textbf{x})\in\mathcal{R}$ and $\textbf{a}:=(a_1, \dots ,a_n)\in \mathbb{F}^{n}$. For all $i=1, \dots ,n$, we define the \textit{right (left) $(\sigma,\delta)$-partial derivative of $F({\bf{x}})$ at ${\bf{a}}$ with respect to the variable $x_i$}, denoted by $\Delta^{x_i}_{\textbf{a}}F(\textbf{a})\ \left(\ \left(\Delta^{x_i}_{\textbf{a},L}F\right)_L(\textbf{a})\ \right)$, as the right (left) evaluation at the point $\textbf{a}$ of the skew polynomial $\Delta^{x_i}_{\textbf{a}}F(\textbf{x})$ ($\Delta^{x_i}_{\textbf{a},L}F(\textbf{x})$) obtained by writing $$F=\sum_{i=1}^{n}\Delta^{x_i}_{\textbf{a}}F(\textbf{x})\cdot(x_i-a_i)+F(\textbf{a})\quad \left(F=\sum_{i=1}^{n}(x_i-a_i)\cdot\Delta^{x_i}_{\textbf{a},L}F(\textbf{x})+F_L(\textbf{a})\right)\ .$$
The skew polynomial $\Delta^{x_i}_{\textbf{a}}F(\textbf{x})\ \left( \Delta^{x_i}_{\textbf{a},L}F({\bf{x}})\right)$ in $\mathcal{R}$ will be called the \textit{right (left) $(\sigma,\delta)$-partial derivative polynomial of $F$ at $\bf{a}$ with respect to the variable $x_i$.} Moreover, we define $\Delta^{o}_{\textbf{a}}F(\textbf{x}):=F(\textbf{x})=:\Delta^{o}_{\textbf{a},L}F(\textbf{x})$ and $\Delta^{o}_{\textbf{a}}F(\textbf{a}):=F(\textbf{a})\ \left(\ \left(\Delta^{o}_{\textbf{a},L}F\right)_L(\textbf{a}):=F_L(\textbf{a})\ \right)$. 
\end{definition}

Furthermore, we can recursively define right (left)  $(\sigma,\delta)$-partial derivatives of higher order of a multivariate skew polynomial as follows.

\begin{definition}\label{Der1.2.}
Given $F(\textbf{x})\in \mathcal{R}$ and $\textbf{a} \in \mathbb{F}^{n}$, 
consider $m:=x_{i_1} x_{i_2} \cdots x_{i_s} \in \mathcal{M}$ with $i_l \in \{1, \dots ,n\}$ for all $l=1,\dots,s$.
We define recursively the \textit{right (left) $(\sigma,\delta)$-partial derivative of $F$ at $\bf{a}$ with respect to $m$}, denoted by $\Delta_{\textbf{a}}^{m}F(\textbf{a} )$ $\left(\ \left( \Delta_{\textbf{a},L}^{m}F\right)_L(\textbf{a})\ \right)$, as the right (left) evaluation at the point $\textbf{a}$ of the skew polynomial 
$$\Delta^{m}_{\textbf{a}}F(\textbf{x}):=\Delta^{x_{i_1}}_{\textbf{a}}\left( \Delta^{x_{i_2}}_{\textbf{a}} \left( \dots  (\Delta^{x_{i_s}}_{\textbf{a}}F ) \right) \right)(\textbf{x})$$
$$\left(\ \Delta^{m}_{\textbf{a},L}F(\textbf{x}):=\Delta^{x_{i_s}}_{\textbf{a},L}\left( \Delta^{x_{i_{s-1}}}_{\textbf{a},L} \left( \dots  (\Delta^{x_{i_1}}_{\textbf{a},L}F ) \right) \right)(\textbf{x})\ \right) \ ,$$
that is,
$$\Delta^{m}_{\textbf{a}}F(\textbf{a}):=\Delta^{x_{i_1}}_{\textbf{a}}\left( \Delta^{x_{i_2}}_{\textbf{a}} \left( \dots  (\Delta^{x_{i_s}}_{\textbf{a}}F ) \right) \right)(\textbf{a}) \ ,$$
$$ \left(\Delta^{m}_{\textbf{a},L}F\right)_L(\textbf{a}):=\left(\Delta^{x_{i_s}}_{\textbf{a},L}\left( \Delta^{x_{i_{s-1}}}_{\textbf{a},L} \left( \dots  (\Delta^{x_{i_1}}_{\textbf{a},L}F ) \right) \right)\right)_{L}(\textbf{a})\ .$$
\end{definition}

\begin{remark}

Let $\mathbb{F}$ be a division ring, $\textbf{a}:=(a_1,a_2, \dots ,a_n)\in \mathbb{F}^{n}$ and assume that $\sigma=Id$ and $\delta=0$. Given $F(\textbf{x})=x_1^2x_2\in \mathbb{F}[\textbf{x};{Id},0]$  and fixed the above graded lexicographic order $\prec$ ($\prec_L$) over $\mathbb{F}[\textbf{x};{Id},0]$, we have $$F(\textbf{x})=(a_2x_1+a_2a_1)(x_1-a_1)+x_1^2(x_2-a_2)+a_2a_1^2$$
$$\left(\ F(\textbf{x})=(x_1-a_1)(x_1x_2+a_1x_2)+(x_2-a_2)a_1^2+a_2a_1^2\ \right)\ .$$
Then, we obtain that
$$\Delta_{\textbf{a}}^{x_1}F(\textbf{x})=a_2x_1+a_2a_1 \;(\Delta_{\textbf{a},L}^{x_1}F(\textbf{x})=x_1x_2+a_1x_2)\;,\; \Delta_{\textbf{a}}^{x_2}F(\textbf{x})=x_1^2\; (\Delta_{\textbf{a},L}^{x_2}F(\textbf{x})=a_1^2).$$
Hence $\Delta_{\textbf{a}}^{x_1}F(\textbf{a})=(\Delta_{\textbf{a},L}^{x_1}F)_L(\textbf{a})=2a_2a_1$ and $\Delta_{\textbf{a}}^{x_2}F(\textbf{a})=(\Delta_{\textbf{a},L}^{x_2}F)_L(\textbf{a})=a_1^2$. Note that if $\mathbb{F}$ is a field, then we can write $\Delta_{\textbf{a}}^{x_1}F(\textbf{a})=(\Delta_{\textbf{a},L}^{x_1}F)_L(\textbf{a})=2a_1a_2$ and $\Delta_{\textbf{a}}^{x_2}F(\textbf{a})=(\Delta_{\textbf{a},L}^{x_2}F)_L(\textbf{a})=a_1^2$ , that is, the notion of right (left) $(Id,0)$-partial derivative is of classical type. On the other hand, we have  $\Delta_{\textbf{a}}^{x_1x_2}F(\textbf{a})=2a_1 \neq \Delta_{\textbf{a}}^{x_2x_1}F(\textbf{a})=0$ $\left(\ \left(\Delta_{\textbf{a},L}^{x_2x_1}F\right)_L(\textbf{a})=0 \neq \left(\Delta_{\textbf{a},L}^{x_1x_2}F\right)_L(\textbf{a})=2a_1\ \right)$. This shows that, unlike in the classical case, in general the mixed right (left) $(\sigma,\delta)$-partial derivatives of a multivariate skew polynomial are not equal.
\end{remark}

\begin{remark}
Every skew polynomial $F(\textbf{x})\in \mathcal{R}$ can be written in terms of its right and left $(\sigma,\delta)$-partial derivatives. More precisely, the definitions of right (left) partial derivatives and a recursive use of right (left) divisions allow us to obtain a right (left) skew multivariate Taylor-type expansion of every $F(\textbf{x})\in \mathcal{R}$ centered at any point ${\bf{a}}=(a_1,...,a_n)\in \mathbb{F}^{n}$ as follows. For any variables $x_i,x_j$ with $i,j\in \{1,2,...,n\}$, we have  
$$F({\bf{x}})=\sum_{i_1=1}^{n}\Delta^{x_{i_1}}_{\textbf{a}}F({\bf{x}})\ (x_{i_1}-a_{i_1})+F(\textbf{a})\ ,$$ 
$$\Delta^{x_{i_1}}_{\textbf{a}}F(x)=\sum_{i_2=1}^{n}\Delta^{x_{i_2}x_{i_1}}_{\textbf{a}}F({\bf x})\ (x_{i_2}-a_{i_2})+\Delta^{x_{i_1}}_{\textbf{a}}F(\textbf{a})\ .$$ Then, substituting $\Delta^{x_i}_{\textbf{a}}F(x)$ in $F(\textbf{x})$, it follows that 
$$F(\textbf{x})=\sum_{i_1=1}^{n}\left(\sum_{i_2=1}^{n}\Delta_{{\bf{a}}}^{x_{i_2}x_{i_1}}F({\bf{x}})(x_{i_2}-a_{i_2})(x_{i_1}-a_{i_1})\right)+\sum_{i_1=1}^{n}\Delta_{{\bf{a}}}^{x_{i_1}}F({\bf{a}})(x_{i_1}-a_{i_1})+F({\bf{a}})\ .$$
So, by a recursive argument, one can deduce the following identities:\begin{equation}\label{for2.5}
F({\bf{x}})=\sum_{k=1}^{\deg F}\left(\sum_{i_1,i_2,...,i_k=1}^{n}\Delta_{{\bf{a}}}^{x_{i_k}\cdots\ x_{i_2}x_{i_1}}F({\bf{a}})\ (x_{i_k}-a_{i_k})\cdots (x_{i_2}-a_{i_2})(x_{i_1}-a_{i_1}) \right)+F({\bf{a}})\ ,
\end{equation}
{\small
\begin{equation}\label{for2.5bis}
    F({\bf{x}})=\sum_{k=1}^{\deg F}\left(\sum_{i_1,i_2,...,i_k=1}^{n}(x_{i_1}-a_{i_1})(x_{i_2}-a_{i_2})\cdots (x_{i_k}-a_{i_k})\ \left(\Delta_{{\bf{a}},L}^{x_{i_1}x_{i_2}\cdots\ x_{i_k}}F\right)_L({\bf{a}}) \right)+F_L({\bf{a}})\ .
\end{equation}
}
For instance, some $F({\bf{x}}), G({\bf x})\in\mathcal{R}$ such that $$F({\bf{a}})=\alpha,\ \Delta_{{\bf{a}}}^{x_{1}}F({\bf{a}})=\beta,\ \Delta_{{\bf{a}}}^{x_{2}}F({\bf{a}})=\gamma,\ \Delta_{{\bf{a}}}^{x_{1}x_{1}}F({\bf{a}})=\delta,\ \Delta_{{\bf{a}}}^{x_{1}x_{2}}F({\bf{a}})=\epsilon\ ,$$ 
{\small
$${\tiny{G_L({\bf{a}})=\alpha,\ \left(\Delta_{{\bf{a}},L}^{x_{1}}G\right)_L({\bf{a}})=\beta,\ \left(\Delta_{{\bf{a}},L}^{x_{2}}G\right)_L({\bf{a}})=\gamma,\ \left(\Delta_{{\bf{a}},L}^{x_{1}x_{1}}G\right)_L({\bf{a}})=\delta,\ \left(\Delta_{{\bf{a}},L}^{x_{1}x_{2}}G\right)_L({\bf{a}})=\epsilon\ }},$$
}
with $\alpha,\beta,\gamma,\delta,\epsilon\in\mathbb{F}$, are given by
$$F({\bf{x}})=\epsilon (x_{1}-a_{1})(x_{2}-a_{2})+\delta (x_{1}-a_{1})^2+\gamma (x_{2}-a_{2})+\beta (x_{1}-a_{1})+\alpha\ ,$$
$$G({\bf{x}})=(x_{1}-a_{1})(x_{2}-a_{2})\epsilon+(x_{1}-a_{1})^2\delta+(x_{2}-a_{2})\gamma+ (x_{1}-a_{1})\beta+\alpha\ .$$
\end{remark}

\smallskip

The linearity of the right (left) $(\sigma,\delta)$-partial derivatives is shown in the next result.  
  
\begin{lemma}\label{Der1.5}
Let $F({\bf{x}}),G({\bf{x}}) \in \mathcal{R}$, ${\bf{a}}\in \mathbb{F}^{n}$, $\lambda \in \mathbb{F}$ and $m \in \mathcal{M}$. Then the following properties hold: 
\begin{itemize}
    \item[$1)$] $\Delta^{m}_{\bf{a}}\lambda=0$, \ $\Delta^{m}_{\bf{a}}(\lambda F({\bf{x}})+G({\bf{x}}))=\lambda(\Delta^{m}_{\bf{a}}F({\bf{x}}))+\Delta^{m}_{\bf{a}}G({\bf{x}})$;
    \item[$2)$] $\Delta^{m}_{{\bf{a}},L}\lambda=0$, \ $\Delta^{m}_{{\bf{a}},L}( F({\bf{x}})\lambda+G({\bf{x}}))=(\Delta^{m}_{{\bf{a}},L}F({\bf{x}}))\lambda+\Delta^{m}_{{\bf{a}},L}G({\bf{x}})$.
    \end{itemize}
\end{lemma}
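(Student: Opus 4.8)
The statement to prove is Lemma \ref{Der1.5}, the linearity of right (left) $(\sigma,\delta)$-partial derivatives. The plan is to reduce everything to the defining identity in Definition \ref{Der1.1} together with the uniqueness of the quotients $G_i({\bf x})$ (resp. $G_{i,L}({\bf x})$) provided by \cite[Lemma 5]{evaluationandinterpolation} (resp. Lemma \ref{lema5bis}), and then to bootstrap from the first-order case $m=x_i$ to arbitrary $m$ by an easy induction on $\deg(m)$.

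\textbf{First-order case.} First I would treat $m=x_i$. For the claim $\Delta^{x_i}_{\bf a}\lambda = 0$: the constant polynomial $\lambda$ can be written as $\sum_{i=1}^n 0\cdot(x_i-a_i)+\lambda$, so by uniqueness each quotient is $0$ and the remainder is $\lambda$; hence $\Delta^{x_i}_{\bf a}\lambda({\bf x})=0$. For additivity and left scalar homogeneity: write $F({\bf x})=\sum_{i=1}^n \Delta^{x_i}_{\bf a}F({\bf x})\,(x_i-a_i)+F({\bf a})$ and $G({\bf x})=\sum_{i=1}^n \Delta^{x_i}_{\bf a}G({\bf x})\,(x_i-a_i)+G({\bf a})$. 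Multiplying the first identity on the left by $\lambda\in\mathbb{F}$ (which is legitimate since $\mathcal{R}$ is a left $\mathbb{F}$-module and left multiplication by a constant does not interfere with the right factors $x_i-a_i$) and adding gives
$$\lambda F({\bf x})+G({\bf x})=\sum_{i=1}^n\bigl(\lambda\Delta^{x_i}_{\bf a}F({\bf x})+\Delta^{x_i}_{\bf a}G({\bf x})\bigr)(x_i-a_i)+\bigl(\lambda F({\bf a})+G({\bf a})\bigr)\ .$$
Because this is again an expression of the form \eqref{for 1.6} for the polynomial $\lambda F({\bf x})+G({\bf x})$, and such an expression is unique (with respect to the fixed ordering $\prec$), the first components must coincide with $\Delta^{x_i}_{\bf a}(\lambda F({\bf x})+G({\bf x}))$ for each $i$, which is exactly the asserted identity at the level of polynomials; evaluating at ${\bf a}$ then gives it at the level of scalars. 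Part $2)$ is the mirror image, using \eqref{4}, Lemma \ref{lema5bis} and the fact that right multiplication by $\lambda$ commutes past the left factors $x_i-a_i$.

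\textbf{Induction on $\deg(m)$.} For general $m=x_{i_1}x_{i_2}\cdots x_{i_s}$ I would induct on $s$. The base case $s=0$ ($m=o$) is immediate from $\Delta^o_{\bf a}F({\bf x})=F({\bf x})$, and $s=1$ is the case just handled. For the inductive step, write $m=x_{i_1}m'$ with $m'=x_{i_2}\cdots x_{i_s}$; by Definition \ref{Der1.2.}, $\Delta^m_{\bf a}F({\bf x})=\Delta^{x_{i_1}}_{\bf a}\bigl(\Delta^{m'}_{\bf a}F\bigr)({\bf x})$. Applying the induction hypothesis to $m'$ gives $\Delta^{m'}_{\bf a}(\lambda F+G)({\bf x})=\lambda\,\Delta^{m'}_{\bf a}F({\bf x})+\Delta^{m'}_{\bf a}G({\bf x})$ and $\Delta^{m'}_{\bf a}\lambda({\bf x})=0$; then applying the first-order case to the polynomials $\Delta^{m'}_{\bf a}F({\bf x})$ and $\Delta^{m'}_{\bf a}G({\bf x})$ yields $\Delta^{x_{i_1}}_{\bf a}\bigl(\lambda\Delta^{m'}_{\bf a}F+\Delta^{m'}_{\bf a}G\bigr)({\bf x})=\lambda\,\Delta^{x_{i_1}}_{\bf a}\bigl(\Delta^{m'}_{\bf a}F\bigr)({\bf x})+\Delta^{x_{i_1}}_{\bf a}\bigl(\Delta^{m'}_{\bf a}G\bigr)({\bf x})$, which is the desired identity for $m$; for the constant, $\Delta^m_{\bf a}\lambda({\bf x})=\Delta^{x_{i_1}}_{\bf a}(0)({\bf x})=0$. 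Part $2)$ follows the same scheme with $m$ decomposed as $m'x_{i_s}$ in accordance with the left convention of Definition \ref{Der1.2.}. The only point requiring any care — and thus the main (very mild) obstacle — is bookkeeping: making sure the left/right scalar passes to the correct side of each factor $x_i-a_i$ and that the fixed monomial orderings $\prec$, $\prec_L$ are what guarantee the uniqueness invoked at every step; once that is pinned down, the argument is entirely formal.
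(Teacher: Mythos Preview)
Your proposal is correct and follows essentially the same approach as the paper's proof: establish the first-order case $m=x_i$ by writing out the defining decompositions of $F$, $G$ and $\lambda$, appealing to uniqueness, and then pass to general $m$ by recursion on $\deg(m)$ via Definition \ref{Der1.2.}. The only difference is that you spell out the inductive step in more detail than the paper, which simply says ``by a recursive argument''.
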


\begin{proof}
We only prove $1)$, because the proofs of $2)$ are analogous, provided that $\varphi$ is an additive group isomorphism. Since $\lambda=\sum_{k=1}^{n}0\cdot(x_i-a_i)+\lambda$ it follows that $\Delta^{x_i}_{\textbf{a}}\lambda=0$ for all $i=1, \dots ,n$. Therefore, for any $m \in \mathcal{M}$, we have $\Delta^{m}_{\textbf{a}}\lambda=0$. 
Finally, for the second identity in $1)$, it is sufficient to show that it is true for any variable $x_i$ with $i=1,\dots,n$. Note that we can write
$$ \lambda F(\textbf{x})=\sum_{i=1}^{n}\lambda \Delta^{x_i}_{\textbf{a}}F(\textbf{x})\cdot(x_i-a_i)+\lambda F(\textbf{a}), \ G(\textbf{x})=\sum_{i=1}^{n}\Delta^{x_i}_{\textbf{a}}G(\textbf{x})\cdot(x_i-a_i)+G(\textbf{a})\ .
$$
Then, we get
$$\lambda F(\textbf{x})+G(\textbf{x})=\sum_{i=1}^{n}(\lambda \Delta^{x_i}_{\textbf{a}}F(\textbf{x})+\Delta^{x_i}_{\textbf{a}}G(\textbf{x}))\cdot(x_i-a_i)+\lambda F(\textbf{a})+G(\textbf{a})\ .$$ Thus, for each variable $x_i$, we have $\Delta^{x_i}_{\textbf{a}}(\lambda F(\textbf{x})+G(\textbf{x}))=\lambda \Delta^{x_i}_{\textbf{a}}F(\textbf{x})+\Delta^{x_i}_{\textbf{a}}G(\textbf{x})$ and, by a recursive argument, we obtain the second identity of $1)$ for any $m\in \mathcal{M}$.
\end{proof}

By using Lemma \ref{Der1.5}, we can obtain a useful formula to compute the right (left) $(\sigma,\delta)$-partial derivatives of a product of two multivariate skew polynomials in $\mathcal{R}$.

\begin{lemma}\label{Der1.7}
Given $F({\bf{x}}),G({\bf{x}}) \in \mathcal{R}$, ${\bf{a}}\in \mathbb{F}^{n}$, $\lambda \in \mathbb{F}$ and $m:=x_{i_1}\cdots x_{i_s} \in \mathcal{M}$ with $i_l\in \{1, \dots ,n\}$ for all $l=1, \dots ,n$, we have
\begin{eqnarray}\label{For2.1}
\Delta^{m}_{{\bf{a}}}(F\cdot G)({\bf{x}})= F({\bf{x}}) \cdot  \Delta^{m}_{{\bf{a}}}G({\bf{x}}) + \sum_{k=1}^{s}\Delta_{{\bf{a}}}^{x_{i_1}\cdots x_{i_k}}\left(F \cdot {\Delta_{{\bf{a}}}^{x_{i_{k+1}}\cdots x_{i_s}}G({\bf{a}})}\right)({\bf{x}})\;,\end{eqnarray}
\begin{eqnarray}\label{For2.2}
\qquad
\Delta^{m}_{{\bf{a}},L}(F \cdot G)({\bf{x}}) =   \Delta^{m}_{{\bf{a}},L}F({\bf{x}}) \cdot G({\bf{x}}) + \sum_{k=1}^{s}\Delta_{{\bf{a}},L}^{x_{i_1}\cdots x_{i_k}}\left(
{\left(\Delta_{{\bf{a}},L}^{x_{i_{k+1}}\cdots x_{i_s}}F\right)}_L({\bf{a}})\cdot G\right)({\bf{x}})\;.\end{eqnarray}
\end{lemma}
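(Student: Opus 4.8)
The plan is to prove \eqref{For2.1} first and then obtain \eqref{For2.2} by an entirely symmetric argument (interchanging left/right divisions, using Lemma \ref{lema5bis} in place of \cite[Lemma 5]{evaluationandinterpolation}, and the ``tilde'' functions $\tilde\sigma,\tilde\delta$ of Remark \ref{remark0} where needed), so the burden is to establish \eqref{For2.1}. I would proceed by induction on $s=\deg(m)$. For the base case $s=1$, $m=x_i$: write $G(\textbf{x})=\Delta^{x_i}_{\textbf{a}}G(\textbf{x})\cdot(x_i-a_i)+G(\textbf{a})+\sum_{j\ne i}\Delta^{x_j}_{\textbf{a}}G(\textbf{x})\cdot(x_j-a_j)$, multiply on the left by $F(\textbf{x})$, and read off the coefficient of $(x_i-a_i)$ in the resulting expansion of $F\cdot G$. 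One gets
$$\Delta^{x_i}_{\textbf{a}}(F\cdot G)(\textbf{x})=F(\textbf{x})\cdot\Delta^{x_i}_{\textbf{a}}G(\textbf{x})+\Delta^{x_i}_{\textbf{a}}\bigl(F\cdot G(\textbf{a})\bigr)(\textbf{x}),$$
which is exactly \eqref{For2.1} for $s=1$ (the term $F\cdot G(\textbf{a})$, a product of $F$ with a constant, is handled by Lemma \ref{Der1.5}, and the contributions of the $j\ne i$ summands vanish by uniqueness of the $G_i$'s with respect to $\prec$). The only subtlety here is that $F(\textbf{x})\cdot\Delta^{x_i}_{\textbf{a}}G(\textbf{x})$ need not be in ``reduced form'' for the division by $x_i-a_i$, but since the $(x_i-a_i)$-coefficient of a skew polynomial is well defined via \eqref{for 1.6} and uniqueness, additivity of $\Delta^{x_i}_{\textbf{a}}$ (Lemma \ref{Der1.5}) lets one split the product as written.

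For the inductive step, write $m=x_{i_1}m'$ with $m'=x_{i_2}\cdots x_{i_s}$, so that $\Delta^{m}_{\textbf{a}}(F\cdot G)(\textbf{x})=\Delta^{x_{i_1}}_{\textbf{a}}\bigl(\Delta^{m'}_{\textbf{a}}(F\cdot G)\bigr)(\textbf{x})$. Apply the induction hypothesis to $\Delta^{m'}_{\textbf{a}}(F\cdot G)(\textbf{x})$, obtaining $F\cdot\Delta^{m'}_{\textbf{a}}G+\sum_{k=2}^{s}\Delta^{x_{i_2}\cdots x_{i_k}}_{\textbf{a}}\bigl(F\cdot\Delta^{x_{i_{k+1}}\cdots x_{i_s}}_{\textbf{a}}G(\textbf{a})\bigr)$, then apply $\Delta^{x_{i_1}}_{\textbf{a}}$ to each summand. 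To the first summand $F\cdot\Delta^{m'}_{\textbf{a}}G(\textbf{x})$ one applies the $s=1$ case (with $\Delta^{m'}_{\textbf{a}}G$ playing the role of $G$), which produces $F\cdot\Delta^{x_{i_1}}_{\textbf{a}}(\Delta^{m'}_{\textbf{a}}G)=F\cdot\Delta^{m}_{\textbf{a}}G$ plus the term $\Delta^{x_{i_1}}_{\textbf{a}}(F\cdot\Delta^{x_{i_1}\cdots x_{i_s}\text{-tail}}\!)$; carefully, the $s=1$ case gives $F\cdot\Delta^{m}_{\textbf{a}}G+\Delta^{x_{i_1}}_{\textbf{a}}\bigl(F\cdot(\Delta^{m'}_{\textbf{a}}G)(\textbf{a})\bigr)$, and $(\Delta^{m'}_{\textbf{a}}G)(\textbf{a})=\Delta^{m'}_{\textbf{a}}G(\textbf{a})=\Delta^{x_{i_2}\cdots x_{i_s}}_{\textbf{a}}G(\textbf{a})$, which is precisely the $k=1$ term of the sum in \eqref{For2.1}. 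To each remaining summand $\Delta^{x_{i_2}\cdots x_{i_k}}_{\textbf{a}}\bigl(F\cdot\Delta^{x_{i_{k+1}}\cdots x_{i_s}}_{\textbf{a}}G(\textbf{a})\bigr)$ one simply prepends $\Delta^{x_{i_1}}_{\textbf{a}}$, recovering $\Delta^{x_{i_1}x_{i_2}\cdots x_{i_k}}_{\textbf{a}}\bigl(F\cdot\Delta^{x_{i_{k+1}}\cdots x_{i_s}}_{\textbf{a}}G(\textbf{a})\bigr)$, i.e.\ the $k$-th term of \eqref{For2.1} for $k=2,\dots,s$. Collecting everything yields \eqref{For2.1}.

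The main obstacle I anticipate is purely bookkeeping: keeping the nested indices $x_{i_1}\cdots x_{i_k}$ versus $x_{i_{k+1}}\cdots x_{i_s}$ straight when re-indexing the inductive sum, and verifying that the ``constant-multiple'' term produced by the $s=1$ case at each stage lands in exactly the $k=1$ slot while the inherited sum shifts into the $k\ge 2$ slots. A secondary point that must be checked—but is routine given the uniqueness of the decomposition \eqref{for 1.6} with respect to $\prec$ (resp.\ \eqref{4} with respect to $\prec_L$ for \eqref{For2.2})—is that $\Delta^{x_i}_{\textbf{a}}$ is genuinely additive and kills products $F\cdot c$ appropriately via Lemma \ref{Der1.5}, so that splitting $F\cdot G$ according to the $(x_j-a_j)$-expansion of $G$ is legitimate. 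For \eqref{For2.2}, the same induction runs on $s$ with $m$ read from the \emph{other} end ($m=m''x_{i_s}$), using that left division of $(x_i-a_i)\cdot\text{(anything)}$ behaves symmetrically under the standing assumption that $\varphi$ of \eqref{6} is an additive group isomorphism; no genuinely new idea is required.
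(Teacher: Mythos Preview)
Your proposal is correct and follows essentially the same route as the paper's own proof: induction on $s=\deg(m)$, with the base case $m=x_i$ obtained by expanding $G$ via its $(x_j-a_j)$-decomposition, multiplying by $F$, then expanding the constant tail $F\cdot G(\textbf{a})$ once more and invoking uniqueness of \eqref{for 1.6}; the inductive step peels off $x_{i_1}$, applies the hypothesis to $m'=x_{i_2}\cdots x_{i_s}$, and uses linearity (Lemma \ref{Der1.5}) together with the $s=1$ case on the summand $F\cdot\Delta^{m'}_{\textbf a}G$ to produce the $k=1$ term. The paper does the base case for all $i$ simultaneously rather than fixing one index, but this is only a cosmetic difference; your remark that \eqref{For2.2} requires peeling $m$ from the opposite end (because $\Delta^{m}_{\textbf a,L}=\Delta^{x_{i_s}}_{\textbf a,L}\circ\Delta^{x_{i_1}\cdots x_{i_{s-1}}}_{\textbf a,L}$) is also exactly what the symmetric argument needs.
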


\begin{proof}
Let us show that \eqref{For2.1} holds for any variable $x_i$ with $i=1, \dots ,n$. Indeed, we have 
{\small
 \begin{align*}
F(\textbf{x}) \cdot G(\textbf{x}) &= \sum_{i=1}^{n}F(\textbf{x})\cdot \Delta_{\textbf{a}}^{x_i}G(\textbf{x})\cdot (x_i-a_i)+F(\textbf{x})\cdot G(\textbf{a})\\
&= \sum_{i=1}^{n}F(\textbf{x})\cdot \Delta_{\textbf{a}}^{x_i}G(\textbf{x})\cdot (x_i-a_i)+\sum_{i=1}^{n}\Delta_{\textbf{a}}^{x_i}(F\cdot G(\textbf{a}))(\textbf{x})(x_i-a_i)+(F\cdot G(\textbf{a}))(\textbf{a})\\
&= \sum_{i=1}^{n}\left[F(\textbf{x})\cdot \Delta_{\textbf{a}}^{x_i}G(\textbf{x})+\Delta_{\textbf{a}}^{x_i}(F\cdot G(\textbf{a}))(\textbf{x})\right](x_i-a_i)+(F\cdot G(\textbf{a}))(\textbf{a})
\end{align*}
}
Thus, $
    \Delta_{\textbf{a}}^{x_i}(F \cdot G)(\textbf{x})=F(\textbf{x})\cdot \Delta_{\textbf{a}}^{x_i}G(\textbf{x})+\Delta_{\textbf{a}}^{x_i}(F\cdot G(\textbf{a}))(\textbf{x})$. Suppose \eqref{For2.1} holds for any monomial $m'\in \mathcal{M}$ such that $\deg(m')\leq s-1$ and $s\geq 2$. Then, we have 
{\small    
 \begin{align*}
 \Delta_{\textbf{a}}^{m}(F\cdot G)(\textbf{x}) &=\Delta_{\textbf{a}}^{x_{i_1}}\left(\Delta_{\textbf{a}}^{x_{i_2}\cdots x_{i_s}}(F\cdot G)\right)(\textbf{x})\\
 &=\Delta_{\textbf{a}}^{x_{i_1}}\left(F \cdot  \Delta^{x_{i_2}\cdots x_{i_s}}_{\textbf{a}}G + \sum_{k=2}^{s}\Delta_{\textbf{a}}^{x_{i_2}\cdots x_{i_k}}\left(F \cdot {\Delta_{\textbf{a}}^{x_{i_{k+1}}\cdots x_{i_s}}G(\textbf{a})}\right)\right)(\textbf{x})\\
  &=\Delta_{\textbf{a}}^{x_{i_1}}\left(F \cdot  \Delta^{x_{i_2}\cdots x_{i_s}}_{\textbf{a}}G\right)(\textbf{x}) + \Delta_{\textbf{a}}^{x_{i_1}}\left( \sum_{k=2}^{s}\Delta_{\textbf{a}}^{x_{i_2}\cdots x_{i_k}}\left(F \cdot \Delta_{\textbf{a}}^{x_{i_{k+1}}\cdots x_{i_s}}G(\textbf{a})\right)\right)(\textbf{x})\\
   &=  F(\textbf{x}) \cdot  \Delta^{m}_{\textbf{a}}G(\textbf{x}) + \sum_{k=1}^{s}\Delta_{\textbf{a}}^{x_{i_1}\cdots x_{i_k}}\left(F \cdot {\Delta_{\textbf{a}}^{x_{i_{k+1}} \cdots x_{i_s}}G(\textbf{a})}\right)(\textbf{x})\;, 
\end{align*}
}
where the third equality is due to the linearity of the right $(\sigma,\delta)$-partial derivatives and the last equality is due to the inductive hypothesis. Finally, by similar arguments used for \eqref{For2.1}, one can prove that formula \eqref{For2.2} holds.
\end{proof}

\begin{remark}
By Lemma \ref{Der1.7}, if $\mathbb{F}$ is a field, $\sigma={Id}$ and $\delta=0$, then we get
$$\Delta_{\textbf{a}}^{x_i}(F \cdot G)(\textbf{a})=F(\textbf{a})\cdot \Delta_{\textbf{a}}^{x_i}G(\textbf{a})+\Delta_{\textbf{a}}^{x_i}F(\textbf{a})\cdot G(\textbf{a})\ ,
$$
$$\left(\Delta_{\textbf{a},L}^{x_i}(F \cdot G)\right)_L(\textbf{a})=F_L(\textbf{a})\cdot \left(\Delta_{\textbf{a},L}^{x_i}G\right)_L(\textbf{a})+\left(\Delta_{\textbf{a},L}^{x_i}F\right)_L(\textbf{a})\cdot G_L(\textbf{a})\ ,
$$
which correspond to the classical Leibniz's ruler.
\end{remark}

\section{Skew Hermite-type interpolation problem}\label{Sect3}

In this section, using the notion of the right (left) $(\sigma,\delta)$-partial derivatives introduced in Section \ref{Sect2} (see Definition \ref{Der1.1}), we solve a skew Hermite-type interpolation problem in $\mathcal{R}$ which extends the skew Lagrange interpolation problem of \cite[Theorem 4]{evaluationandinterpolation}. 

\subsection{The multivariate case $(n\geq 1)$}
Let us start by defining a left (right) ideal associated to a finite set of points in $\mathbb{F}^{n}$ which will be crucial for the skew Hermite-type interpolation problem.

\begin{definition}\label{def-ideals}
Let $\Omega=\{{\bf{a_1}},{\bf{a_2}},\dots,{\bf{a_k}}\} \subseteq \mathbb{F}^n$ be a finite set and consider the monomials $m_j=x_{j_{s(j)}}\cdots x_{j_2}x_{j_1} \in \mathcal{M}$ such that either $j_l \in \{1,2, \dots ,n\}$ and $s(j)\geq 1$, or $m_j=o$. Given $\vec{m}:=(m_1,m_2, \dots ,m_k)\in\mathcal{M}^k$, we denote by $I^{\vec{m}}(\Omega)$ the set
$$\{F \in \mathcal{R}: F({\bf{a_j}}) = \Delta_{{\bf{a_j}}}^{x_{j_1}}F({\bf{a_j}})=\Delta_{{\bf{a_j}}}^{x_{j_2}x_{j_1}}F({\bf{a_j}})= \dots = \Delta_{{\bf{a_j}}}^{m_j}F({\bf{a_j}})=0, \;\forall j=1,\dots,k \}\ ,$$ 
and by $I^{\vec{m}}_L(\Omega)$ the set
$$\{F \in \mathcal{R}: F_L({\bf{a_j}}) = \left(\Delta_{{\bf{a_j}},L}^{x_{j_{s(j)}}}F\right)_L({\bf{a_j}})= \dots = \left(\Delta_{{\bf{a_j}},L}^{x_{j_{s(j)}} \cdots\ x_{j_2}}F\right)_L({\bf{a_j}})= \left(\Delta_{{\bf{a_j}},L}^{m_j}F\right)_L({\bf{a_j}})=0, $$
$\forall j=1,\dots,k \}\ .$
\end{definition}

\begin{proposition}
For any finite set $\Omega=\{{\bf{a_1}}, \dots , {\bf{a_k}}\} \subseteq \mathbb{F}^{n}$ and $\vec{m}=(m_1, \dots , m_k)\in\mathcal{M}^k$, the set $I^{\vec{m}}(\Omega)$ $\left(I^{\vec{m}}_L(\Omega)\right)$ is a left (right) ideal in $\mathcal{R}$.
\end{proposition}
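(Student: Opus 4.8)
The plan is to verify directly that $I^{\vec{m}}(\Omega)$ is closed under addition, under left multiplication by arbitrary elements of $\mathcal{R}$, and contains $0$; the statement for $I^{\vec{m}}_L(\Omega)$ follows by the left-right symmetric argument (using that $\varphi$ is an additive group isomorphism, so that Lemma \ref{lema5bis} and the left partial derivatives are defined). That $0 \in I^{\vec{m}}(\Omega)$ and that $I^{\vec{m}}(\Omega)$ is an additive subgroup are immediate from the additivity of evaluation and the linearity of the partial derivatives in Lemma \ref{Der1.5}: if $F, G \in I^{\vec{m}}(\Omega)$ then for each $j$ and each prefix monomial $m$ of $m_j$ (including $m = o$), $\Delta^{m}_{\textbf{a}_j}(F - G)(\textbf{a}_j) = \Delta^{m}_{\textbf{a}_j}F(\textbf{a}_j) - \Delta^{m}_{\textbf{a}_j}G(\textbf{a}_j) = 0$.

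The substantive step is the left-ideal property: I must show that if $F \in I^{\vec{m}}(\Omega)$ and $H \in \mathcal{R}$, then $HF \in I^{\vec{m}}(\Omega)$. Here I would invoke Lemma \ref{Der1.7}, the product formula for right $(\sigma,\delta)$-partial derivatives. Fix $j$ and let $m = x_{i_1}\cdots x_{i_s}$ be any prefix of $m_j$ (read in the order dictated by Definition \ref{def-ideals}). Applying \eqref{For2.1} with $F$ there replaced by $H$ and $G$ replaced by $F$,
\begin{equation*}
\Delta^{m}_{\textbf{a}_j}(H \cdot F)(\textbf{x}) = H(\textbf{x}) \cdot \Delta^{m}_{\textbf{a}_j}F(\textbf{x}) + \sum_{k=1}^{s}\Delta_{\textbf{a}_j}^{x_{i_1}\cdots x_{i_k}}\left(H \cdot \Delta_{\textbf{a}_j}^{x_{i_{k+1}}\cdots x_{i_s}}F(\textbf{a}_j)\right)(\textbf{x})\ .
\end{equation*}
Now evaluate at $\textbf{a}_j$ on the right. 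Each monomial $x_{i_{k+1}}\cdots x_{i_s}$ appearing in the sum is itself a prefix of $m_j$ (it is a sub-word of $m$ starting from the same end), so the scalar $\Delta_{\textbf{a}_j}^{x_{i_{k+1}}\cdots x_{i_s}}F(\textbf{a}_j)$ vanishes by the hypothesis $F \in I^{\vec{m}}(\Omega)$; hence each summand $H \cdot \Delta_{\textbf{a}_j}^{x_{i_{k+1}}\cdots x_{i_s}}F(\textbf{a}_j)$ is the zero polynomial and contributes $0$ after applying $\Delta_{\textbf{a}_j}^{x_{i_1}\cdots x_{i_k}}$ and evaluating. For the leading term $H(\textbf{x}) \cdot \Delta^{m}_{\textbf{a}_j}F(\textbf{x})$ I use Theorem \ref{Productrule}(1): since $\big(\Delta^{m}_{\textbf{a}_j}F\big)(\textbf{a}_j) = 0$ (again because $m$ is a prefix of $m_j$ and $F \in I^{\vec{m}}(\Omega)$), the product rule gives $\big(H \cdot \Delta^{m}_{\textbf{a}_j}F\big)(\textbf{a}_j) = 0$. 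Therefore $\Delta^{m}_{\textbf{a}_j}(H \cdot F)(\textbf{a}_j) = 0$ for every prefix $m$ of every $m_j$, i.e. $HF \in I^{\vec{m}}(\Omega)$.

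The main obstacle to watch is purely bookkeeping: one must make sure the monomials arising in the sum of \eqref{For2.1} are genuinely among the "prefixes" listed in Definition \ref{def-ideals} (the definition lists $o$, $x_{j_1}$, $x_{j_2}x_{j_1}$, $\dots$, $m_j$, read as suffixes of $m_j$ built up one letter at a time), and to keep the two opposite reading conventions — right-to-left order in Definition \ref{def-ideals} versus the indexing in Lemma \ref{Der1.7} — consistent. Once the indexing is aligned, every term that appears is covered by the vanishing hypotheses. For the right ideal $I^{\vec{m}}_L(\Omega)$ one argues symmetrically with \eqref{For2.2} and Theorem \ref{Productrule}(2), multiplying $F$ on the right by $H$; this is where the standing assumption that $\varphi$ is an additive group isomorphism is needed so that the left divisions, and hence the left partial derivatives, exist and are unique.
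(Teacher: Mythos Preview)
Your proof is correct and follows exactly the same approach as the paper's own proof, which simply cites Lemma~\ref{Der1.5} for closure under addition and Lemma~\ref{Der1.7} together with Theorem~\ref{Productrule} for closure under left multiplication. You have merely (and accurately) unpacked how the product formula~\eqref{For2.1} and the vanishing hypotheses interact, including the bookkeeping check that the sub-words $x_{i_{k+1}}\cdots x_{i_s}$ arising in the sum are precisely among the monomials $o, x_{j_1}, x_{j_2}x_{j_1}, \dots$ for which membership in $I^{\vec{m}}(\Omega)$ forces the derivative to vanish.
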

\begin{proof}
Given $F,G\in I^{\vec{m}}(\Omega)$ $\left(I^{\vec{m}}_L(\Omega)\right)$, by Lemma \ref{Der1.5} we have $F+G\in I^{\vec{m}}(\Omega)$ $\left(I^{\vec{m}}_L(\Omega)\right)$. Finally, for any $F\in \mathcal{R}$ and $G\in I^{\vec{m}}(\Omega)$ $\left(I^{\vec{m}}_L(\Omega)\right)$, by Lemma \ref{Der1.7} and Theorem \ref{Productrule} we obtain that $F\cdot G\in I^{\vec{m}}(\Omega)$ $\left(G\cdot F\in I^{\vec{m}}_L(\Omega)\right)$. Therefore, $I^{\vec{m}}(\Omega)$ $\left(I^{\vec{m}}_L(\Omega)\right)$ is a left (right) ideal in $\mathcal{R}$.
\end{proof}

\begin{remark}\label{rem0}
When all the components of $\vec{m}$ are equal, for instance $\vec{m}=(m',\dots,m',\dots)$ with $m':=x_{j_s}\dots x_{j_2}x_{j_1}\in\mathcal{M}$, one could define
$I^{\vec{m}}(\Omega)=I^{(m')}(\Omega) \ \left(\ I_L^{\vec{m}}(\Omega)=I_L^{(m')}(\Omega)\ \right)$ as the set
{\small
$$\{F \in \mathcal{R}: F({\bf{a}}) = \Delta_{{\bf{a}}}^{x_{j_1}}F({\bf{a}})=\Delta_{{\bf{a}}}^{x_{j_2}x_{j_1}}F({\bf{a}})= \dots = \Delta_{{\bf{a}}}^{m_j}F({\bf{a}})=0, \;\forall {\bf a}\in\Omega \}$$
$$\left(\{F \in \mathcal{R}: F_L({\bf{a}}) = \left(\Delta_{{\bf{a}}}^{x_{j_s}}F\right)_L({\bf{a}})=\dots=\left(\Delta_{{\bf{a}}}^{x_{j_s}\dots x_{j_2}}F\right)_L({\bf{a}})= \left(\Delta_{{\bf{a}}}^{m_j}F\right)_L({\bf{a}})=0, \;\forall {\bf a}\in\Omega \}\right)\ ,$$
}
for any $\Omega\subseteq\mathbb{F}^n$. In particular, when
$\vec{m}=(o,\dots,o,\dots)$, recalling that 
$$\Delta_{{\bf{a}}}^{o}F({\bf{a}})=F({\bf{a}})\ \ \ \left(\ \left(\Delta_{{\bf{a}},L}^{o}F\right)_L({\bf{a}})=F_L({\bf{a}})\ \right)\ ,$$ 
we obtain the left ideal (as in \cite[Definition 13]{evaluationandinterpolation}) 
$$I^{(o)}(\Omega)=I(\Omega):=\{F\in \mathcal{R}:F({\bf{a}})=0,\;\forall {\bf{a}}\in \Omega\}\ ,$$
and the right ideal 
$$I^{(o)}_L(\Omega)=I_L(\Omega):=\{F\in \mathcal{R}:F_L({\bf{a}})=0,\;\forall {\bf{a}}\in \Omega\}\ .$$
\end{remark}

\medskip

\begin{definition}\label{def-zeros}
Let $I\subset\mathcal{R}$ be a left (right) ideal
and consider $\vec{m}:=(m_1,m_2, \dots ,m_k)$ with
monomials $m_j=x_{j_{s(j)}}\cdots x_{j_2}x_{j_1} \in \mathcal{M}$ for every $j=1,\dots,k$, such that either $j_l \in \{1,2, \dots ,n\}$ and $s(j)\geq 1$, or $m_j=o$. We denote by $Z^{\vec{m}}(I)$ the set
$$\{{\bf a} \in \mathbb{F}^n\ :\  F({\bf{a}}) = \Delta_{{\bf{a}}}^{x_{j_1}}F({\bf{a}})=\Delta_{{\bf{a}}}^{x_{j_2}x_{j_1}}F({\bf{a}})= \dots = \Delta_{{\bf{a}}}^{m_j}F({\bf{a}})=0, $$
\qquad\qquad\quad $\forall j=1,\dots,k,\ \forall F\in I \}\ ,$

\medskip

\noindent and by $Z^{\vec{m}}_L(I)$ the set
$$\{{\bf a} \in \mathbb{F}^n\ :\ F_L({\bf{a}}) = \left(\Delta_{{\bf{a}},L}^{x_{j_{s(j)}}}F\right)_L({\bf{a}})= \dots = \left(\Delta_{{\bf{a}},L}^{x_{j_{s(j)}} \cdots\ x_{j_2}}F\right)_L({\bf{a}})= \left(\Delta_{{\bf{a}},L}^{m_j}F\right)_L({\bf{a}})=0, $$
\ \ \ $\forall j=1,\dots,k,\ \forall F\in I \}\ .$
\end{definition}

\medskip

\begin{remark}
The following properties hold:
\begin{enumerate}
\item if $I_1\subseteq I_2\subseteq \mathcal{R}$, then for any $\vec{m}\in\mathcal{M}^k$ we have
$$Z^{\vec{m}}(I_2)\subseteq Z^{\vec{m}}(I_1) \ \ \left( Z^{\vec{m}}_L(I_2)\subseteq Z^{\vec{m}}_L(I_1)\right) \ ;$$
\item consider $\vec{m}:=(m_1, \dots ,m_k)$ with
monomials $m_j=x_{j_{s(j)}}\cdots x_{j_2}x_{j_1} \in \mathcal{M}$ for every $j=1,\dots,k$, such that either $j_l \in \{1,2, \dots ,n\}$ and $s(j)\geq 1$, or $m_j=o$; if $\vec{m}':=(m'_1, \dots ,m'_k)$ with $m'_j=x_{j_{t(j)}}\cdots x_{j_2}x_{j_1} \in \mathcal{M}\ \left( m'_j=x_{j_{s(j)}}\cdots x_{j_{t(j)}}\in \mathcal{M}\right)$ for every $j=1,\dots,k$, such that either $j_l \in \{1,2, \dots ,n\}$ and $1\leq t(j)\leq s(j)$, or $m'_j=o$, then for any left (right) ideal $I\subset\mathcal{R}$, we get
$$Z^{\vec{m}}(I)\subseteq Z^{\vec{m}'}(I) \ \ \left( Z^{\vec{m}}_L(I)\subseteq Z^{\vec{m}'}_L(I)\right) \ .$$
\end{enumerate}
\end{remark}

\smallskip

For simplicity, from now on we will only give proofs for the right versions of all the next results. The proofs of their relative left versions are very similar to that given below, keeping in mind the suitable changes of notation. 

\smallskip

The following technical result will be the key tool of all the next results for solving the skew Hermite-type interpolation problem (Theorem \ref{theorem 3.1.7}). 

\begin{lemma}\label{3.1.8}
Let $\Omega=\{{\bf{a_1}},\dots,{\bf{a_k}}\} \subseteq \mathbb{F}^n$ be a finite set and let $\vec{m}=(m_1,\dots,m_k)\in \mathcal{M}^k$ with either $m_j=o$, or $m_j=x_{j_{s(j)}}\cdots x_{j_2}x_{j_1}$ for all $j=1, \dots , k$ and $j_l \in \{1,\dots,n\}$. Then for any $m\in\mathcal{M}$ with $\deg (m)=N:=\sum_{i=1}^{k}\left[\deg (m_i)+1\right]$, there exists $F\in I^{\vec{m}}(\Omega)\ \left(I^{\vec{m}}_L(\Omega)\right)$ such that $\deg (F) =N$ and $LM(F)=m$. 

\noindent In particular, there always exists $F\in I^{\vec{m}}(\Omega)\ \left(I^{\vec{m}}_L(\Omega)\right)$ such that $0<\deg (F)\leq N$.
\end{lemma}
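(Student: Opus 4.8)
The plan is to build the desired polynomial $F$ directly as a left-linear combination of products of the linear factors $x_i - a_i$, so that membership in $I^{\vec m}(\Omega)$ becomes automatic from the Taylor-type expansion \eqref{for2.5} and the definition of the ideal. For a single point ${\bf a_j}$ with associated monomial $m_j = x_{j_{s(j)}}\cdots x_{j_1}$, the conditions $F({\bf a_j}) = \Delta^{x_{j_1}}_{{\bf a_j}}F({\bf a_j}) = \cdots = \Delta^{m_j}_{{\bf a_j}}F({\bf a_j}) = 0$ say precisely that, when $F$ is expanded about ${\bf a_j}$ via \eqref{for2.5}, all coefficients attached to the "initial segments'' $o,\ x_{j_1},\ x_{j_2}x_{j_1},\dots,\ m_j$ vanish. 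A natural candidate that kills all these at once is the product $P_j := (x_{j_1}-a_{j_1})(x_{j_2}-a_{j_2})\cdots (x_{j_{s(j)}}-a_{j_{s(j)}})(x_t - a_{t})$ for a suitably chosen extra variable $x_t$, which has degree $\deg(m_j)+1$: right-dividing $P_j$ successively by $x_{j_1}-a_{j_1}$, then $x_{j_2}-a_{j_2}$, etc., yields partial derivative polynomials that are again products of such factors (times $x_t - a_t$ at the end), hence evaluate to $0$ at ${\bf a_j}$. I would record this as a preliminary observation (or invoke \eqref{for2.5}): $P_j \in I^{(m_j)}(\{{\bf a_j}\})$ and $\deg P_j = \deg(m_j)+1$.

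\textbf{Handling several points.} To get a single $F$ lying in $I^{\vec m}(\Omega)$, i.e. vanishing to the prescribed order at \emph{every} point simultaneously, I would take $F$ to be a product over $j=1,\dots,k$ of factors, one block for each point, arranged so that (i) the whole product has degree exactly $N = \sum_{i=1}^k[\deg(m_i)+1]$ and (ii) for each $j$ the block corresponding to ${\bf a_j}$ appears as a \emph{right factor} of $F$ — this is where Theorem \ref{Productrule}(1) enters: if $G({\bf a_j}) = 0$ then $(FG)({\bf a_j}) = 0$, and combined with Lemma \ref{Der1.7}, the vanishing of all the lower partial derivatives of the right factor propagates to $FG$. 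Concretely, order the points and set $F = B_k B_{k-1}\cdots B_1$ where each $B_j$ is a degree-$(\deg(m_j)+1)$ product of linear factors $(x_{?}-a_{?,?})$ chosen so that $B_j\cdots B_1 \in I^{(m_j)}(\{{\bf a_j}\})$; since $B_j\cdots B_1$ is a right factor of $F$, Lemma \ref{Der1.7} (the product rule for $\Delta^m_{\bf a}$, whose error terms all involve evaluations of $\Delta$-derivatives of the right factor $B_j\cdots B_1$ at ${\bf a_j}$, which vanish) gives $F\in I^{(m_j)}(\{{\bf a_j}\})$ for each $j$, hence $F \in I^{\vec m}(\Omega) = \bigcap_j I^{(m_j)}(\{{\bf a_j}\})$.

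\textbf{Prescribing the leading monomial.} The remaining requirement is $LM(F) = m$ for an \emph{arbitrary} prescribed $m\in\mathcal{M}$ of degree $N$. The issue is that the crude product above has a leading monomial forced by the choice of the linear factors, not freely prescribable. To fix $LM(F)$, I would instead argue dimension-theoretically in the graded pieces: let $V_{\le N}$ be the (left) $\mathbb{F}$-subspace of $\mathcal{R}$ spanned by monomials of degree $\le N$, and consider the $\mathbb{F}$-linear map $\mathrm{ev} : V_{\le N} \to \mathbb{F}^N$ sending $F$ to the tuple of all the values $\bigl(\Delta^{x_{j_t}\cdots x_{j_1}}_{{\bf a_j}}F({\bf a_j})\bigr)_{j,\,0\le t\le s(j)}$ that define $I^{\vec m}(\Omega)$ — there are exactly $N = \sum_j(\deg(m_j)+1)$ such scalar conditions. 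By Lemma \ref{Der1.5} each of these is $\mathbb{F}$-linear (in the appropriate sense), so $\mathrm{ev}$ is linear; its kernel is $I^{\vec m}(\Omega)\cap V_{\le N}$. Now $\dim_{\mathbb F} V_{\le N} = \#\{m'\in\mathcal M : \deg m' \le N\} > N$ (strict, since $n\ge 1$ forces at least $N+1$ monomials up to degree $N$ — indeed the monomials $o, x_1, x_1^2,\dots,x_1^N$ already give $N+1$), so $\ker(\mathrm{ev}) \ne 0$; more carefully, for the monomial $m$ of degree $N$ I want as leading term, consider the subspace $W_m$ spanned by $m$ together with all monomials of degree $< N$: then $\dim W_m \ge N+1 > N$, so $\mathrm{ev}|_{W_m}$ has nontrivial kernel, and any nonzero $F$ in it with a nonzero $m$-coefficient has $LM(F) = m$ and $\deg F = N$ — one picks such an $F$ by taking a kernel element not supported entirely in degrees $< N$, which exists because the restriction of $\mathrm{ev}$ to the degree-$<N$ monomials alone is already known (by the product construction of the first two paragraphs, or by induction on $k$) to be... \textbf{the main obstacle} is exactly this last point: showing that the lower-degree part does not already surject onto $\mathbb F^N$, equivalently that one can force the kernel element to involve $m$. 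I expect to resolve it either by the explicit product construction above (which exhibits a kernel element of degree exactly $N$, then add a suitable multiple to adjust its leading monomial to $m$ using that left-multiplication by $\mathbb F$ and adding lower-degree kernel elements preserves membership), or by a careful count using that the map $V_{<N}\to\mathbb F^N$ can be shown to land in a proper subspace. The final clause "$0 < \deg F \le N$'' is then immediate: $F$ is nonzero of degree $N$, and if one only wants a small-degree element one may take any nonzero member of the (nontrivial) kernel of $\mathrm{ev}$ restricted to $V_{\le N}$.
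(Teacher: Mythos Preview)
Your overall architecture (build $F$ as a product of linear factors, use that $I^{\vec m}(\Omega)$ is a left ideal so right factors control membership) is the same as the paper's. But you are missing the one idea that makes the construction go through, and this gap is exactly what forces you into the unresolved dimension argument of your third paragraph.

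The point you miss is that in each linear factor $(x_?-c)$ the \emph{variable} $x_?$ may be chosen freely, while the \emph{constant} $c$ is what you adjust to obtain the next vanishing condition, using Theorem~\ref{Productrule}. Concretely: if $G$ has already been built and you now want $(x_t-c)\,G$ to satisfy, say, $\Delta_{{\bf a}}^{m'}\bigl((x_t-c)G\bigr)({\bf a})=0$, Lemma~\ref{Der1.7} gives $\Delta_{{\bf a}}^{m'}\bigl((x_t-c)G\bigr)=(x_t-c)\,\Delta_{{\bf a}}^{m'}G$ (since the lower derivatives of $G$ at ${\bf a}$ already vanish), and then by Theorem~\ref{Productrule} you simply set $c:=\bigl({\bf a}^{\,\Delta_{{\bf a}}^{m'}G({\bf a})}\bigr)_t$ when $\Delta_{{\bf a}}^{m'}G({\bf a})\neq 0$ (and $c$ is irrelevant otherwise). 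The variable index $t$ never enters this choice; it is completely free. So if $m=x_{k_N}\cdots x_{k_1}$ is the prescribed monomial, you take the $i$-th factor from the right to be $(x_{k_i}-c_i)$ with $c_i$ determined as above, and the leading monomial of the resulting product is $m$ on the nose.

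Your version instead ties the variables in the linear factors to the letters of the $m_j$'s (writing $P_j=(x_{j_1}-a_{j_1})\cdots$ and $B_j$ with constants ``$a_{?,?}$''), which is why you conclude the leading monomial is ``forced'' and then retreat to a dimension count you cannot close. That count really does have the hole you identify: nothing prevents $\ker(\mathrm{ev})\cap W_m$ from lying entirely in degrees $<N$. The paper avoids this entirely by the direct construction above; there is no need for any dimension argument.
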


\begin{proof}
Write $m=x_{k_N}\ {\cdots}\ x_{k_2} x_{k_1}\in\mathcal{M}$. Defining $F_1({\bf{x}}):=x_{k_1}-({\bf a_1})_{k_1}$, we have $F_1({\bf a_1})=0$ with $\deg (F_1)=1$ and $\Delta_{{\bf a_1}}^{x_{1_1}}F_1({\bf{x}})=0$ or $1$.
Then, define $F_2({\bf{x}}):=\left( x_{k_2}-({\bf a_1})_{k_2}\right)F_1({\bf{x}})$. By Lemma \ref{Der1.7}, we have
$$\Delta_{{\bf a_1}}^{x_{1_1}}F_2({\bf{x}})=\left(x_{k_2}-({\bf a_1})_{k_2}\right)\Delta_{{\bf a_1}}^{x_{1_1}}F_1({\bf{x}})\ .$$
In any case, we get $F_2({\bf a_1})=\Delta_{{\bf a_1}}^{x_{1_1}}F_2({\bf a_1})=0$ with $\deg (F_2)=2$ and $\Delta_{{\bf a_1}}^{x_{1_2} x_{1_1}}F_2({\bf{x}})=0$ or $1$. By a recursive argument, we can construct $F_t({\bf{x}}):=\Pi_{i=1}^{t}\left(x_{k_i}-({\bf a_1})_{k_i}\right)$ with $t=\deg (m_1)+1$ and such that $F_t({\bf{x}})\in I^{m_1}(\{{\bf a_1}\})$. Thus, define now $G_1({\bf{x}}):=(x_{k_{t+1}}-\alpha_1)F_t({\bf{x}})$ for some $\alpha_1\in\mathbb{F}$. If $F_t({\bf a_2})=0$, then $G_1({\bf{x}})\in I^{m_1,o}(\{{\bf a_1}\}\cup \{{\bf a_2}\})$. Otherwise, by taking $\alpha_1:=\left({\bf a_2}^{F_t({\bf a_2})} \right)_{k_{t+1}}$, we get again $G_1({\bf{x}})\in I^{m_1,o}(\{{\bf a_1}\}\cup \{{\bf a_2}\})$ with $\deg (G_1)=t+1$. Therefore, defining $G_2({\bf{x}})=\left(x_{k_{t+2}}-\alpha_2 \right)G_1({\bf{x}})$, we have $G_2({\bf{x}})\in I^{m_1,o}(\{{\bf a_1}\}\cup \{{\bf a_2}\})$ and from Lemma \ref{Der1.7} it follows that
$$
\Delta_{{\bf a_2}}^{x_{2_1}}G_2({\bf{x}})=\left(x_{k_{t+2}}-\alpha_2 \right)\cdot \Delta_{{\bf a_2}}^{x_{2_1}}G_1({\bf{x}})\ .
$$
If $\Delta_{{\bf a_2}}^{x_{2_1}}G_1({\bf a_2})=0$, then $\Delta_{{\bf a_2}}^{x_{2_1}}G_2({\bf a_2})=0$. If $\Delta_{{\bf a_2}}^{x_{2_1}}G_1({\bf a_2})\neq 0$, then by choosing $\alpha_2:=\left({\bf a_2}^{\Delta_{{\bf a_2}}^{x_{2_1}}G_1({\bf a_2})}\right)_{k_{t+2}}$, we obtain $\Delta_{{\bf a_2}}^{x_{2_1}}G_2({\bf a_2})=0$. Hence, there exists
$G_2({\bf{x}})\in I^{m_1,x_{2_1}}(\{{\bf a_1}\}\cup \{{\bf a_2}\})$ with $\deg (G_2)=t+2$. By a recursive argument, we can find a skew polynomial $F({\bf{x}})\in I^{\vec{m}}(\Omega)$ such that $\deg (F)=N$ and $LM(F({\bf{x}}))=m$ by construction. 
\end{proof}

From the proof of Lemma \ref{3.1.8}, we can deduce the following algorithm for both versions.

\newpage

\begin{algorithm}[h!]
\small
\caption{
For any finite set $\Omega=\{{\bf{a_1}},\dots,{\bf{a_k}}\}\subseteq \mathbb{F}^{n}$ and every $m\in\mathcal{M}$ such that $\deg(m)=\sum_{i=1}^{k}(\deg(m_i)+1)$, compute 
$F\in I^{(m_1,\dots,m_k)}(\Omega)\ \left(\mathrm{or}\ F\in I_L^{(m_1,\dots,m_k)}(\Omega)\right)$ with $LM(F)=m$}
\label{Alg1}
\begin{algorithmic}[1]
\smallskip
\Require{
\smallskip
\phantom{aaaa}
\smallskip
\begin{itemize}
\item $\Omega=\{{\bf{a_1}},\dots,{\bf{a_k}}\}\subseteq \mathbb{F}^{n}$ 
\item $\vec{m}:=(m_1,\dots,m_k) \in \mathcal{M}^k$ with $m_j:=x_{j_{s(j)}}\dots x_{j_2}x_{j_1}$ for every $j\in\{1,\dots,k\}$, 
\item $m\in\mathcal{M}$ written as 
$$m=x_{(k,s(k))}\dots x_{(k,1)}x_{(k,0)}\dots x_{(2,s(2))}\dots x_{(2,1)}x_{(2,0)}\dots x_{(1,s(1))}\dots x_{(1,1)}x_{(1,0)}$$
\end{itemize}
\smallskip
}
\Ensure{$F\in I^{\vec{m}}(\Omega)\ \left(\ or \ \  F\in I^{\vec{m}}_L(\Omega)\right)$ with $\deg (F)=\sum_{i=1}^{k}\left[\deg (m_i)+1\right]$ and $LM(F)=m$.
\smallskip}
\State{$G_0\gets 1$}
\For{$j\gets 1$ to $k$}
\If{$G_{j-1}({\bf{a_j}})=0$ \ \ $\left(\ or \ \  \left(G_{j-1}\right)_L({\bf{a_j}})=0\ \right)$}
\State{$G_j\gets x_{(j,0)}\cdot G_{j-1}$ \ \ $\left(\ or \ \ G_j\gets G_{j-1}\cdot x_{(j,0)}\ \right)$}
\Else
\State{$G_j\gets \left[x_{(j,0)}-\left( {\bf{a_j}}^{G_{j-1}({\bf{a_j}})}\right)_{(j,0)}\right]\cdot G_{j-1}$ \\ \quad \quad \ \ \ $\left(\ or \ \  G_j\gets G_{j-1}\cdot \left[x_{(j,0)}-\left( {}^{\left(G_{j-1}\right)_L({\bf{a_j}})}{\bf{a_j}}\right)_{(j,0)}\right]\ \right)$}
\EndIf
\For{$i\gets 1$ to $s(j)$}
\If{$\Delta_{\bf{a_j}}^{x_{j_i}\dots\ x_{j_1}}G_j({\bf{a_j}})=0$ \ \ $\left(\ or \ \  \left(\Delta_{\bf{a_j},L}^{x_{j_{s(j)}}\dots\ x_{j_{s(j)+1-i}}}G_j\right)_L({\bf{a_j}})=0\ \right)$} 
\State{$G_j\gets x_{(j,i)}\cdot G_{j}$ \ \ $\left(\ or \ \  G_j\gets G_{j}\cdot x_{(j,i)} \ \right)$}
\Else
\State{$G_j\gets \left[x_{(j,i)}-\left( {\bf{a_j}}^{\Delta_{{\bf{a_j}}}^{x_{j_i}\dots\ x_{j_1}}G_{j}({\bf{a_j}})}\right)_{(j,i)}\right]\cdot G_{j}$ \\ \phantom{aaaaaaaa} $\left(\ or \ \  G_j\gets G_j\cdot \left[x_{(j,i)}-\left( {}^{\left(\Delta_{\bf{a_j},L}^{x_{j_{s(j)}}\dots\ x_{j_{s(j)+1-i}}}G_j\right)_L({\bf{a_j}})}{\bf{a_j}}\right)_{(j,i)}\right]\ \right)$}
\EndIf
\EndFor
\EndFor \\
\Return{$G_k$}
\medskip
\end{algorithmic}
\end{algorithm}

\medskip

Furthermore, Algorithm \ref{Alg1} can be modified as follows.

\begin{algorithm}[h!]
\small
\caption{Compute $F\in I^{(m_1,\dots,m_k)}(\Omega)\ \left(\mathrm{or}\ F\in I_L^{(m_1,\dots,m_k)}(\Omega)\right)$ such that $\deg(F)\leq \sum_{i=1}^{k}(\deg(m_i)+1)$}
\label{Alg2}
\begin{algorithmic}[1]
\Require{$\Omega=\{{\bf{a_1}},\dots,{\bf{a_k}}\}\subseteq \mathbb{F}^{n},m_1,\dots,m_k \in \mathcal{M}.$}
\algstore{myalg}
\end{algorithmic}
\end{algorithm}

\pagebreak

\begin{algorithm}[h!]                     
\begin{algorithmic} [1]                  
\algrestore{myalg}
\Ensure{ $F\in I^{(m_1,\dots,m_k)}(\Omega)$ with $\deg(F)\leq \sum_{i=1}^{k}(\deg(m_i)+1)$
\bigskip}
\State{$G_0\gets 1$}
\For{$j\gets 1$ to $k$}
\If{$G_{j-1}({\bf{a_j}})=0$ \ \ $\left(\ or \ \  \left(G_{j-1}\right)_L({\bf{a_j}})=0\ \right)$}
\State{$G_j\gets G_{j-1}$}
\Else
\State{$G_j\gets \left[x_{t}-\left( {\bf{a_j}}^{G_{j-1}({\bf{a_j}})}\right)_{t}\right]\cdot G_{j-1}$ \ \ $\left(\ or \ \  G_j\gets G_{j-1}\cdot \left[x_{t}-\left( {}^{\left(G_{j-1}\right)_L({\bf{a_j}})}{\bf{a_j}}\right)_{t}\right]\ \right)$}
\EndIf
\For{$i\gets 1$ to $s(j)$}
\If{$\Delta_{\bf{a_j}}^{x_{j_i}\dots\ x_{j_1}}G_j({\bf{a_j}})=0$ \ \ $\left(\ or \ \  \left(\Delta_{\bf{a_j},L}^{x_{j_{s(j)}}\dots\ x_{j_{s(j)+1-i}}}G_j\right)_L({\bf{a_j}})=0\ \right)$} 
\State{$G_j\gets G_{j}$ }
\Else
\State{$G_j\gets \left[x_{t}-\left( {\bf{a_j}}^{\Delta_{{\bf{a_j}}}^{x_{t}\dots\ x_{j_1}}G_{j}({\bf{a_j}})}\right)_{t}\right]\cdot G_{j}$ \\ \phantom{aaaaaaaa} $\left(\ or \ \  G_j\gets G_j\cdot \left[x_{t}-\left( {}^{\left(\Delta_{\bf{a_j},L}^{x_{j_{s(j)}}\dots\ x_{j_{s(j)+1-i}}}G_j\right)_L({\bf{a_j}})}{\bf{a_j}}\right)_{t}\right]\ \right)$}
\EndIf
\EndFor
\EndFor \\
\Return{$G_k$}
\end{algorithmic}
\end{algorithm}

\begin{lemma}\label{3.1.9}
Let $\Omega=\{{\bf{a_1}},\dots,{\bf{a_k}}\} \subseteq \mathbb{F}^n$ be a finite 
set and let
$\vec{m}:=(m_1,\dots,m_k)\in\mathcal{M}^k$. If there is 
${\bf{a}}\in \mathbb{F}^n\setminus \Omega$ such that either 
$$I^{\vec{m}}(\Omega) \supsetneq I^{\vec{m},o}(\Omega\cup\{{\bf a}\}) \quad \left(I^{\vec{m}}_L(\Omega) \supsetneq I^{\vec{m},o}_L(\Omega\cup\{{\bf a}\})\right)$$
or $I^{\vec{m},m'}(\Omega\cup\{{\bf a}\}) \supsetneq I^{\vec{m},x_jm'}(\Omega\cup\{{\bf a}\})\ \ \left(I^{\vec{m},m'}_L(\Omega\cup\{{\bf a}\}) \supsetneq I^{\vec{m},m'x_j}_L(\Omega\cup\{{\bf a}\})\right)$ for some $m'\in\mathcal{M}$, possibly $m'=o$, then there exists either
$$F\in I^{\vec{m}}(\Omega) \setminus I^{\vec{m},o}(\Omega\cup\{{\bf{a}}\}) \quad \left(I^{\vec{m}}_L(\Omega) \setminus I^{\vec{m},o}_L(\Omega\cup\{\bf{a}\})\right)$$ 
or $F\in I^{\vec{m},m'}(\Omega\cup\{{\bf a}\}) \setminus I^{\vec{m},x_jm'}(\Omega\cup\{{\bf{a}}\}) \ \left(I^{\vec{m},m'}_L(\Omega\cup\{{\bf a}\}) \setminus I^{\vec{m},m'x_j}_L(\Omega\cup\{\bf{a}\})\right)$,
such that either $\deg (F) \leq \sum_{i=1}^{k}(\deg(m_i)+1)\ ,$ or \ $\deg (F) \leq \sum_{i=1}^{k}\left(\deg(m_i)+1\right)+\left(\deg(m')+1 \right)\ $, respectively.
\end{lemma}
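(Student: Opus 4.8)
The plan is to treat both alternatives of the hypothesis at once. Set $J:=I^{\vec m}(\Omega)$, $J':=I^{\vec m,o}(\Omega\cup\{\mathbf a\})$ and $N':=N=\sum_{i=1}^{k}(\deg m_i+1)$ in the first case, and $J:=I^{\vec m,m'}(\Omega\cup\{\mathbf a\})$, $J':=I^{\vec m,x_jm'}(\Omega\cup\{\mathbf a\})$, $N':=\sum_{i=1}^{k}(\deg m_i+1)+(\deg m'+1)$ in the second. In each case $N'$ is exactly the number of right $(\sigma,\delta)$-derivative conditions cutting out the left ideal $J$, the hypothesis reads $J\supsetneq J'$, and the assertion is that $J\setminus J'$ contains a polynomial of degree $\le N'$.

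The core of the argument is the claim that $J=\mathcal{R}\cdot\big(J\cap\mathcal{R}_{\le N'}\big)$, where $\mathcal{R}_{\le N'}$ denotes the left $\mathbb{F}$-submodule of skew polynomials of degree $\le N'$; in words, $J$ is generated as a left ideal by its elements of degree at most $N'$. Granting this, the lemma is immediate: if $J\cap\mathcal{R}_{\le N'}\subseteq J'$, then, $J'$ being a left ideal, $J=\mathcal{R}\cdot(J\cap\mathcal{R}_{\le N'})\subseteq\mathcal{R}J'\subseteq J'$, contradicting $J\supsetneq J'$; hence there is $F\in(J\cap\mathcal{R}_{\le N'})\setminus J'$, and $\deg F\le N'$ by construction.

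To prove the claim I would induct on $\deg F$ for $F\in J$, the case $\deg F\le N'$ being trivial. For the inductive step, Lemma \ref{3.1.8} — applied, in the second case, to the point set $\Omega\cup\{\mathbf a\}$ and the vector $(\vec m,m')$ — provides, for every monomial $m$ with $\deg m=N'$, a polynomial $G_m\in J$ with $\deg G_m=N'$ and $LM(G_m)=m$; a Gaussian-elimination step along $\prec$ then replaces these by $\widetilde{G}_m\in J\cap\mathcal{R}_{\le N'}$ whose degree-$N'$ homogeneous component is a single monomial $c_m m$ with $c_m\in\mathbb{F}^{*}$. Now take $F\in J$ with $d:=\deg F>N'$, write its degree-$d$ component as $\sum_\mu F_\mu\mu$ and factor each $\mu$ as $\mu=\nu m$ with $\deg\nu=d-N'=:r$ and $\deg m=N'$; it suffices to realise every such $\nu m$ as the top homogeneous part of an element of $\mathcal{R}\cdot\widetilde{G}_m$, for then a suitable left $\mathbb{F}$-combination of these cancels the degree-$d$ part of $F$ and drops us to degree $<d$.

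The step I expect to be the main obstacle is precisely this last one, because left multiplication does not respect leading monomials: commuting the leading coefficient $c_m$ of $\widetilde{G}_m$ past a degree-$r$ monomial $\nu$ via \eqref{1.4} distributes it over all degree-$r$ monomials, so that the degree-$d$ part of $\nu\widetilde{G}_m$ is $\big(\sum_{\nu'}\sigma^{(r)}_{\nu,\nu'}(c_m)\,\nu'\big)m$ for appropriate iterates $\sigma^{(r)}$ of the component functions of $\sigma$. The key point will be that the associated map $\mathbb{F}^{n^{r}}\to\mathbb{F}^{n^{r}}$, $(\lambda_\nu)_\nu\mapsto\big(\sum_\nu\lambda_\nu\,\sigma^{(r)}_{\nu,\nu'}(c_m)\big)_{\nu'}$, is invertible, its inverse being the analogous map built from $c_m^{-1}$; this follows from a telescoping use of $\sigma(ab)=\sigma(a)\sigma(b)$ together with the invertibility of $\sigma(c)\in\mathbb{F}^{n\times n}$ (with inverse $\sigma(c^{-1})$, since $\sigma(1)=I$). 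Thus every monomial $\nu m$ of degree $d$ is the leading part of an element of $\mathcal{R}\cdot(J\cap\mathcal{R}_{\le N'})$, which completes the induction and hence the claim. As usual only the right-hand versions are carried out; the left-hand ones follow verbatim under the standing assumption that $\varphi$ in \eqref{6} is an additive-group isomorphism, using Lemma \ref{lema5bis} in place of \cite[Lemma 5]{evaluationandinterpolation} and the left form of Lemma \ref{3.1.8}.
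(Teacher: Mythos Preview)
Your proof is correct and takes a genuinely different route from the paper's. The paper argues directly: pick $F\in J\setminus J'$ with $LM(F)$ minimal for a graded monomial order, suppose $\deg F>N'$, and use Lemma~\ref{3.1.8} to manufacture a monic $G\in J$ with $LM(F)=m''\cdot LM(G)$; either $G\notin J'$ already (contradicting minimality), or $G\in J'$ and then $H:=F-\alpha\,m''G\in J\setminus J'$ has $LM(H)\prec LM(F)$ (contradicting minimality again). You instead establish the stronger structural fact $J=\mathcal{R}\cdot(J\cap\mathcal{R}_{\le N'})$ and deduce the lemma from it in one line. Your route buys an order-independent argument (the paper's step $LM(m''G)=m''\,LM(G)$ tacitly needs an order compatible with left concatenation, such as the right-to-left graded lex fixed in \S\ref{Sect2}); the paper's route is shorter.

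One simplification you are missing: the polynomials produced by Lemma~\ref{3.1.8} are products of monic linear factors, hence monic, and your Gaussian elimination only subtracts left $\mathbb{F}$-multiples of $\widetilde{G}_{m'}$ with $m'\prec m$, leaving the $m$-coefficient untouched. So in fact $c_m=1$ throughout, the degree-$d$ part of $\nu\,\widetilde{G}_m$ is literally $\nu m$, and the whole discussion of the invertibility of the $\sigma^{(r)}$-transfer map is unnecessary (though your argument for that invertibility, via $\sigma(c_m)\sigma(c_m^{-1})=I$ iterated, is correct). With this observation the inductive step collapses to the single line $\deg\bigl(F-\sum_\mu F_\mu\,\nu_\mu\,\widetilde{G}_{m_\mu}\bigr)<d$.
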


\begin{proof}
Since either $I^{\vec{m}}(\Omega) \supsetneq I^{\vec{m},o}(\Omega\cup\{{\bf{a}}\})$, or $I^{\vec{m},m'}(\Omega\cup\{{\bf a}\}) \supsetneq I^{\vec{m},x_jm'}(\Omega\cup\{{\bf a}\})$, take either
$F\in I^{\vec{m}}(\Omega) \setminus I^{\vec{m},o}(\Omega\cup\{{\bf{a}}\})$, or $F\in I^{\vec{m},m'}(\Omega\cup\{{\bf a}\}) \setminus I^{\vec{m},x_jm'}(\Omega\cup\{{\bf{a}}\})$, such that $LM(F)$ is minimum possible with
respect to $\prec$, where $\prec$ denotes any monomial order of $\mathcal{M}$ preserving degrees. If we suppose that either $\deg(F) \geq N+ 1$, or $\deg(F) \geq N+(\deg (m')+ 1)+1$, where $N:=\sum_{i=1}^{k}(\deg(m_i)+1)$, then either $\deg(LM(F)) \geq N + 1$, or $\deg(LM(F)) \geq N +(\deg (m')+ 1)+1$,
by the choice of $\prec$. Then, by applying Lemma \ref{3.1.8}, we can construct a monic skew polynomial $G\in I^{\vec{m}}(\Omega)\ \left(G\in  I^{\vec{m},m'}(\Omega\cup\{{\bf a}\})\right)$ such that $LM(F)=m''\cdot LM(G)$ for some $m''\in\mathcal{M}$ with $\deg (m'')\geq 1$. If either $G({\bf a})\neq 0$, or $\Delta_{\bf a}^{x_jm'}G({\bf a})\neq 0$, then we deduce that either $G\in I^{\vec{m}}(\Omega) \setminus I^{\vec{m},o}(\Omega\cup\{\bf{a}\})$, or $G\in I^{\vec{m},m'}(\Omega) \setminus I^{\vec{m},x_jm'}(\Omega\cup\{\bf{a}\})$, a contradiction because $\deg (F)>\deg (G)$. Suppose now that either $G({\bf a})=0$, or $\Delta_{\bf a}^{x_jm'}G({\bf a})=0$. Then there exists $\alpha \in \mathbb{F}$ such that $H:= F - \alpha m' \cdot G$ satisfies $LM(H)\prec LM(F)$. Now, by the definition of $H$, it holds that either $H\in I^{\vec{m}}(\Omega) \setminus I^{\vec{m},o}(\Omega\cup\{\bf{a}\})$, or $H\in I^{\vec{m},m'}(\Omega\cup\{{\bf a}\}) \setminus I^{\vec{m},x_jm'}(\Omega\cup\{\bf{a}\})$,
which is absurd by the minimality of $LM(F)$. Therefore we deduce that either $\deg(F)\leq N$, or $\deg(F)\leq N+(\deg(m')+1)$, respectively.
\end{proof}

By using the notion of left (right) ideals as in Definition \ref{def-ideals}, we can introduce a left (right) derivative polynomial independence notion, as follows.

\begin{definition}\label{pd-independent}
For $k\in\mathbb{Z}_{\geq 1}$, let $\Omega=\{{\bf{a_1}}, \dots ,{\bf{a_k}}\} \subseteq \mathbb{F}^n$ be a finite set and consider $m'=x_{i_t}\cdots x_{i_2}x_{i_1}\in\mathcal{M}$ and $\vec{m}=(m_1,\dots,m_k)\in \mathcal{M}^k$ with either $m_j=x_{j_{s(j)}}\cdots x_{j_2}x_{j_1}$, or $m_j=o$, for all $j=1,\dots , k$. 
\begin{enumerate}
\item ${\bf{a}}\in \mathbb{F}^{n}\setminus \Omega$ is \textbf{left (right) Derivative Polynomial independent} (or, simply, \textbf{DP-independent}) \textbf{of type $(\vec{m},m')$  from $\Omega$} if and only if
$$I^{\vec{m}}(\Omega) \supsetneq I^{\vec{m},o}(\Omega\cup \{{\bf{a}}\})\supsetneq I^{\vec{m},x_{i_1}}(\Omega\cup \{{\bf{a}}\})\supsetneq I^{\vec{m},x_{i_2}x_{i_1}}(\Omega\cup \{{\bf{a}}\})\supsetneq\dots \supsetneq I^{\vec{m},m'}(\Omega\cup \{{\bf{a}}\})
$$ 
$$\left( I^{\vec{m}}_L(\Omega) \supsetneq I^{\vec{m},o}_L(\Omega\cup \{{\bf{a}}\})\supsetneq I^{\vec{m},x_{i_t}}_L(\Omega\cup \{{\bf{a}}\})\supsetneq \dots \supsetneq I^{\vec{m},m'}(\Omega\cup \{{\bf{a}}\})\right) \ ;$$
\item $\Omega$ is \textbf{left (right) DP-independent of type $\vec{m}=(m_1,\dots,m_k)\in\mathcal{M}^k$} if and only if
${\bf a_i}$ is left (right) DP-independent of type $(\vec{m}_i,m_i)$ from $\Omega_{(i)}$ for all $i=1,\dots,k$, where $\Omega_{(j)}:=\Omega\setminus\{{\bf {a}_j}\}$ and $\vec{m}_j:=(m_1,\dots,m_{j-1},m_{j+1},\dots,m_k)\in\mathcal{M}^{k-1}$ for each $j\in\{1,\dots,k\}$; 
\item $\Omega$ is \textbf{left (right) P-independent} if and only if $\Omega$ is left (right) DP-independent of type $(o,\dots,o)\in\mathcal{M}^k$.   
\end{enumerate}
\end{definition}

\begin{remark}\label{rem1}
An element ${\bf a}\in\mathbb{F}^n$ is P-independent from $\Omega\subseteq\mathbb{F}^n$ (see \cite[Definition 23]{evaluationandinterpolation}) if and only if ${\bf{a}}$ is left DP-independent of type $(\vec{0},o)$  from $\Omega$, where $\vec{0}:=(o,\dots,o)\in\mathcal{M}^k$. Moreover, the definition of $\Omega:=\{{\bf a_1},\dots,{\bf a_k} \}\subseteq\mathbb{F}^n$ as a P-independent set given in \cite[Definition 23]{evaluationandinterpolation} is equivalent to require that $I(\Omega_{(j)}) \supsetneq I(\Omega)$ for all $j=1,\dots,k$. Thus, in our setting, a (finite) P-independent set $\Omega$ as above is simply a left P-independent set (that is, a left DP-independent set of type $\vec{0}\in\mathcal{M}^k$). 
\end{remark}

\medskip

From Definition \ref{pd-independent}, we can deduce the following property of PD-independent sets.

\begin{proposition}\label{prop1}
If $\Omega= \{{\bf{a_1}},\dots,{\bf{a_k}}\} \subseteq \mathbb{F}^n$ is left (right) DP-independent of type $(m_1,\dots,m_k)$, then any $W=\{{\bf{a_{j_1}}},\dots,{\bf{a_{j_s}}}\}\subseteq \Omega$ is left (right) DP-independent of type $(m_{j_1}, \dots , m_{j_s})$, where $j_i\in\{1,\dots,k\}$. In particular, every subset of a left {(right)} P-independent set is left (right) P-independent.
\end{proposition}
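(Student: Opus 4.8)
The statement to prove (Proposition \ref{prop1}) says that DP-independence of type $(m_1,\dots,m_k)$ for $\Omega$ passes to every subset $W=\{{\bf a_{j_1}},\dots,{\bf a_{j_s}}\}$, with the correspondingly restricted tuple of monomials; the final sentence is the special case where all $m_i=o$. By definition of DP-independence of $\Omega$ (Definition \ref{pd-independent}(2)), for every index $i\in\{1,\dots,k\}$ the point ${\bf a_i}$ is DP-independent of type $(\vec{m}_i,m_i)$ from $\Omega_{(i)}=\Omega\setminus\{{\bf a_i}\}$; and to prove $W$ is DP-independent of type $(m_{j_1},\dots,m_{j_s})$ we must show, for each $r\in\{1,\dots,s\}$, that ${\bf a_{j_r}}$ is DP-independent of type $(\vec{m}^W_{r},m_{j_r})$ from $W\setminus\{{\bf a_{j_r}}\}$, where $\vec m^W_r$ is the tuple of the remaining monomials indexed by $W$. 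So the whole proposition reduces to a single, cleaner claim: \emph{if ${\bf a}$ is DP-independent of type $(\vec{n},m')$ from a set $\Sigma$, then ${\bf a}$ is DP-independent of type $(\vec{n}',m')$ from any subset $\Sigma'\subseteq\Sigma$}, where $\vec n'$ is $\vec n$ restricted to the indices of $\Sigma'$. First I would state and prove this reduction lemma, then apply it with $\Sigma=\Omega_{(j_r)}$, $\Sigma'=W\setminus\{{\bf a_{j_r}}\}$ for each $r$.

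\textbf{Proving the reduction claim.} DP-independence of type $(\vec n,m')$ of ${\bf a}$ from $\Sigma$ is the chain of strict inclusions
$$I^{\vec n}(\Sigma)\supsetneq I^{\vec n,o}(\Sigma\cup\{{\bf a}\})\supsetneq I^{\vec n,x_{i_1}}(\Sigma\cup\{{\bf a}\})\supsetneq\dots\supsetneq I^{\vec n,m'}(\Sigma\cup\{{\bf a}\}),$$
where $m'=x_{i_t}\cdots x_{i_1}$. The key observation is that whether a given strict inclusion in this chain holds depends only on being able to produce a polynomial satisfying all the conditions imposed by the smaller ideal except the last one. Since $\Sigma'\subseteq\Sigma$, the conditions defining $I^{\vec n',\,*}(\Sigma'\cup\{{\bf a}\})$ are a \emph{subset} of those defining $I^{\vec n,\,*}(\Sigma\cup\{{\bf a}\})$, hence $I^{\vec n,*}(\Sigma\cup\{{\bf a}\})\subseteq I^{\vec n',*}(\Sigma'\cup\{{\bf a}\})$ for each term of the chain. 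The point is to upgrade ``$\subseteq$ between the big-set ideals'' into ``$\subsetneq$ between the small-set ideals.'' Concretely, for each consecutive pair in the small-set chain, say $I^{\vec n',m''}(\Sigma'\cup\{{\bf a}\})$ versus $I^{\vec n',x_{i_{r}}m''}(\Sigma'\cup\{{\bf a}\})$, I take a witness $F$ in $I^{\vec n,m''}(\Sigma\cup\{{\bf a}\})\setminus I^{\vec n,x_{i_r}m''}(\Sigma\cup\{{\bf a}\})$ (which exists by hypothesis on $\Sigma$); such $F$ lies in $I^{\vec n',m''}(\Sigma'\cup\{{\bf a}\})$ because we only dropped conditions, and it still fails to lie in $I^{\vec n',x_{i_r}m''}(\Sigma'\cup\{{\bf a}\})$ precisely because the offending condition $\Delta_{\bf a}^{x_{i_r}m''}F({\bf a})\neq 0$ survives — it was one of the conditions indexed by ${\bf a}$, which is still in $\Sigma'\cup\{{\bf a}\}$. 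The same argument handles the first link $I^{\vec n'}(\Sigma')\supsetneq I^{\vec n',o}(\Sigma'\cup\{{\bf a}\})$, using the witness in $I^{\vec n}(\Sigma)\setminus I^{\vec n,o}(\Sigma\cup\{{\bf a}\})$ (note $F\in I^{\vec n}(\Sigma)\subseteq I^{\vec n'}(\Sigma')$).

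\textbf{Assembling and the last sentence.} Having the reduction claim, the proof of Proposition \ref{prop1} is immediate: for each $r\in\{1,\dots,s\}$, apply it with $\Sigma=\Omega_{(j_r)}$, $\Sigma'=W\setminus\{{\bf a_{j_r}}\}\subseteq\Omega_{(j_r)}$, the point being ${\bf a_{j_r}}$, and monomial data $(\vec m_{j_r},m_{j_r})$ restricting to $(\vec m^W_r,m_{j_r})$; DP-independence of $\Omega$ supplies the hypothesis, and we conclude ${\bf a_{j_r}}$ is DP-independent of the right type from $W\setminus\{{\bf a_{j_r}}\}$. Since this holds for every $r$, $W$ is DP-independent of type $(m_{j_1},\dots,m_{j_s})$ by Definition \ref{pd-independent}(2). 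For the final sentence, take all $m_i=o$: then $\vec m$ and all its restrictions are tuples of $o$'s, so ``DP-independent of type $\vec 0$'' is exactly ``P-independent'' by Definition \ref{pd-independent}(3), and the general statement specializes verbatim. I do not expect any real obstacle here; the only thing needing care is bookkeeping the index/monomial correspondence between $\Omega$ and $W$ (matching each ${\bf a_{j_r}}$ with its monomial $m_{j_r}$ and with the correct restricted tuple $\vec m^W_r$), and checking that dropping conditions on points outside $\Sigma'$ never destroys a \emph{non}-membership that was caused by a condition at ${\bf a}$ — which is the content of the reduction claim above.
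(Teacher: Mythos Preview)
Your proposal is correct and follows essentially the same approach as the paper: both use the inclusion $I^{\vec m_t,\ast}(\Omega_{(t)}\cup\{{\bf a}_t\})\subseteq I^{\vec m'_t,\ast}(W_{(t)}\cup\{{\bf a}_t\})$ (since the defining conditions for the latter are a subset of those for the former) and then take a witness $F$ from the $\Omega$-chain, which still fails the ${\bf a}_t$-condition and hence witnesses strict inclusion in the $W$-chain. Your write-up is somewhat more explicitly organized (factoring out the ``reduction claim'' and treating the first link of the chain separately), but the argument is the same as the paper's.
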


\begin{proof}
Let ${t}\in\{ {{{j_1}}},\dots,{{{j_s}}} \}$, $\vec{m}:=(m_1,\dots,m_k)$ and $\vec{m}':=(m_{j_1}, \dots , m_{j_s})$. Moreover, write $m_t:=x_{t_s}\cdots x_{t_2}x_{t_1}$. Since for every $j=1,\dots,s-1$, we have
$$I^{\vec{m}_{t}}\left(\Omega_{(t)}\right) \supsetneq I^{\vec{m}_t,o}(\Omega_{(t)}\cup\{{{\bf a}_{t}}\})\supsetneq \cdots\supsetneq I^{\vec{m}_t,x_{t_j}\cdots x_{t_1}}(\Omega_{(t)}\cup\{{{\bf a}_{t}}\})\supsetneq  I^{\vec{m}_t,x_{t_{j+1}}\cdots x_{t_1}}(\Omega_{(t)}\cup\{{{\bf a}_{t}}\}) \ ,$$
it follows that there exists $F\in I^{\vec{m}_{{\bf t}},x_{t_{j}}\cdots x_{t_1}}\left(\Omega_{(t)}\cup\{{{\bf a}_{t}}\}\right) \subseteq I^{\vec{m}'_{t},x_{t_{j}}\cdots x_{t_1}}\left(W_{(t)}\cup\{{{\bf a}_{t}}\}\right)$ such that $\Delta_{{{\bf a}_t}}^{x_{t_{j+1}}\cdots x_{t_1}}F({{\bf a}_t})\neq 0$. Hence $I^{\vec{m}'_{t},x_{t_{j}}\cdots x_{t_1}}\left(W_{({t})}\cup\{{{\bf a}_{t}}\}\right) \supsetneq I^{\vec{m}'_t,x_{t_{j+1}}\cdots x_{t_1}}(W_{({t})}\cup\{{{\bf a}_{t}}\})$ for every  ${t}\in\{ {{{j_1}}},\dots,{{{j_s}}} \}$ and for any $j=1,\dots,s-1$, i.e. $W=\{{{\bf a}_{j_1}},\dots,{{\bf a}_{j_s}}\}\subseteq \Omega$ is left DP-independent of type $\vec{m}'$. Finally, note that the proof of the right version is quite similar and that the last part of the statement follows from Definition \ref{pd-independent} (3).
\end{proof}

\medskip

\noindent The next result will be useful
to perform a skew Hermite-type interpolation recursively in $\mathcal{R}$ and it extends the equivalence between the points $1.$ and $3.$ of \cite[Proposition 25]{evaluationandinterpolation}.

\begin{proposition}\label{prop 3.1.7}
Let $\Omega=\{{\bf{a_1}},\dots,{\bf{a_k}}\} \subseteq \mathbb{F}^n$ be a finite set and let $m_1,\dots,m_k\in \mathcal{M}$. Then the following conditions are equivalent:
\begin{itemize}
    \item[$1)$] $\Omega$ is left (right) DP-independent of type $(m_1,\dots,m_k)$;
    \item[$2)$] for any ordering ${\bf{b_1}},\dots,{\bf{b_k}}$ of the elements in $\Omega$ and for any $i =
1,\dots,k-1$, it holds that $\bf{b_{i+1}}$ is left (right) DP-independent of type $(m'_{1},\dots,m'_{i},m'_{i+1})$ from $\Omega_i:= \{{\bf{b_1}},\dots,{\bf{b_i}}\}$,
where $m'_j=m_{k(j)}$ with ${\bf b_j}={\bf a_{k(j)}}$ for $j=1,\dots,i$.
\end{itemize}
\end{proposition}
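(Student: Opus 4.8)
The plan is to derive both implications purely from Proposition \ref{prop1} (left/right DP-independence passes to subsets) together with the observation that the ideals $I^{\vec{m}}(\Omega)$ and $I^{\vec{m}}_L(\Omega)$ depend only on the assignment of a monomial to each point of $\Omega$, not on the order in which the points are listed. So the first step is to record this relabelling observation: applying one and the same permutation to the list $({\bf{a_1}},\dots,{\bf{a_k}})$ and to $(m_1,\dots,m_k)$ merely permutes the defining conditions in Definition \ref{def-ideals}, hence leaves the ideal unchanged; consequently the phrases ``${\bf{a}}$ is left (right) DP-independent of type $(\vec{m},m')$ from $\Omega$'' and ``$\Omega$ is left (right) DP-independent of type $\vec{m}$'' make sense without reference to any enumeration of $\Omega$, and one may relabel the points freely.

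For $1)\Rightarrow 2)$, fix an ordering ${\bf{b_1}},\dots,{\bf{b_k}}$ of $\Omega$, write ${\bf{b_j}}={\bf{a_{k(j)}}}$ and $m'_j:=m_{k(j)}$, and fix $i\in\{1,\dots,k-1\}$. Applying Proposition \ref{prop1} to the subset $W:=\{{\bf{b_1}},\dots,{\bf{b_{i+1}}}\}\subseteq\Omega$ shows that $W$ is left (right) DP-independent of type $(m'_1,\dots,m'_{i+1})$. Reading off Definition \ref{pd-independent}~(2) for the $(i+1)$-st element ${\bf{b_{i+1}}}$ of $W$ then states precisely that ${\bf{b_{i+1}}}$ is left (right) DP-independent of type $\bigl((m'_1,\dots,m'_i),m'_{i+1}\bigr)$ from $W\setminus\{{\bf{b_{i+1}}}\}=\{{\bf{b_1}},\dots,{\bf{b_i}}\}=\Omega_i$, which is exactly condition $2)$.

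For $2)\Rightarrow 1)$, I would verify the defining condition of Definition \ref{pd-independent}~(2) for $\Omega$ one index at a time. Fix $i\in\{1,\dots,k\}$; assuming $k\geq 2$ (for $k=1$ condition $2)$ is vacuous and condition $1)$ reduces to the strictness of a chain of left (right) ideals, which is obtained exactly as in Lemma \ref{3.1.8}), choose an ordering ${\bf{b_1}},\dots,{\bf{b_k}}$ of $\Omega$ with ${\bf{b_k}}={\bf{a_i}}$ and set $m'_j:=m_{k(j)}$ where ${\bf{b_j}}={\bf{a_{k(j)}}}$. Applying $2)$ with running index $k-1$ gives that ${\bf{b_k}}$ is left (right) DP-independent of type $\bigl((m'_1,\dots,m'_{k-1}),m'_k\bigr)$ from $\Omega_{k-1}=\{{\bf{b_1}},\dots,{\bf{b_{k-1}}}\}$. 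But $\Omega_{k-1}=\Omega\setminus\{{\bf{a_i}}\}=\Omega_{(i)}$, $m'_k=m_i$, and $(m'_1,\dots,m'_{k-1})$ is the tuple $\vec{m}_i=(m_1,\dots,m_{i-1},m_{i+1},\dots,m_k)$ re-enumerated along ${\bf{b_1}},\dots,{\bf{b_{k-1}}}$; by the relabelling observation this is precisely the statement that ${\bf{a_i}}$ is left (right) DP-independent of type $(\vec{m}_i,m_i)$ from $\Omega_{(i)}$. Since $i$ was arbitrary, $\Omega$ is left (right) DP-independent of type $(m_1,\dots,m_k)$, i.e. $1)$ holds.

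The only genuinely delicate part is the bookkeeping behind the relabelling observation: one has to check that the four data entering Definition \ref{pd-independent}~(1)--(2) --- the chain of ideals, the removed point, the extra monomial $m'$, and the tuple carried by the remaining points --- transform compatibly under a permutation of the enumeration of $\Omega$, so that DP-independence is an honest property of the unordered configuration of pointed monomials. Granting that, both directions are immediate from Proposition \ref{prop1} and the unravelling of Definitions \ref{def-ideals} and \ref{pd-independent}; in particular no further use of the $(\sigma,\delta)$-partial-derivative machinery is needed beyond what is already packaged into Proposition \ref{prop1}.
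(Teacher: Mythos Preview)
Your proposal is correct and follows essentially the same approach as the paper: both directions rest on Proposition~\ref{prop1} (DP-independence passes to subsets) together with the freedom to reorder the points so that a chosen ${\bf a_i}$ appears last. The only cosmetic difference is that the paper phrases each implication as a proof by contradiction, whereas you argue directly and make the relabelling invariance of $I^{\vec m}(\Omega)$ explicit; the underlying content is the same.
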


\begin{proof}
$1)\Rightarrow 2)$ Suppose that ${\bf{b_{i+1}}}$ is not left DP-independent of type $(m'_{1},\dots,m'_{i+1})$ from $\Omega_i$
for some $i$ and a given ordering ${\bf{b_1}},\dots,{\bf{b_k}}$
of $\Omega$. Then, from Definition \ref{pd-independent} it follows that either $I^{(m'_1, \dots , m'_i)}(\Omega_i)=I^{(m'_1, \dots , m'_i,o)}(\Omega_i\cup\{{\bf b_{i+1}} \})$, or 
$$I^{(m'_1, \dots , m'_i,x_{t_{j-1}}\cdots x_{t_1})}(\Omega_i\cup\{{\bf b_{i+1}} \})=I^{(m'_1, \dots , m'_i,x_{t_j}\cdots x_{t_1})}(\Omega_i\cup\{{\bf b_{i+1}} \})$$ for some $j\in\{1,\dots,s \}$, where $m'_{i+1}=x_{t_s}\cdots x_{t_2}x_{t_1}$ and $x_{t_0}:=o$, but this contradicts Proposition \ref{prop1} by considering $W:=\{{\bf{b_1}},\dots,{\bf{b_i}},{\bf{b_{i+1}}}\}\subseteq\Omega$.

$2)\Rightarrow 1)$ Assume that $\Omega$ is not left DP-independent of type $\vec{m}:=(m_1, \dots , m_k)$. Thus by Definition \ref{pd-independent} there exists ${\bf a_i}\in \Omega$ such that ${\bf a_i}$ is not left PD-independent of type $(m_1,\dots,m_{i-1},m_{i+1},\dots,m_k,m_i)$ from $\Omega_{({\bf a_i})}:=\Omega\setminus \{{\bf a_i}\}$. By ordering the $k$ elements in
$\Omega$ in such a way that ${\bf{b_{k}}} = {\bf a_i}$, it follows that ${\bf b_k}$ is not left DP-independent of type $(m'_{1},\dots,m'_{k-1},m'_{k})$ from $\Omega_{k-1}$, but this contradicts $2)$.
\end{proof}

Finally, let us give here some conditions equivalent to Definition \ref{pd-independent} (2). Before to do this, we need further notation and 
results.

\begin{definition}
Let $\Omega'=\{{\bf{b_1}}, \dots ,{\bf{b_h}}\} \subseteq \mathbb{F}^n$ be a finite set and consider $\vec{m}'=(m'_1,\dots,m'_h)\in \mathcal{M}^h$.
We denote by $V_{\leq d'}^{\vec{m}'}\left(\Omega'\right)\ \left(V_{d'}^{\vec{m}'}\left(\Omega'\right) \right)$ $\left(\mathrm{or}\ V_{L,{\leq d'}}^{\vec{m}'}\left(\Omega'\right)\ \left(V_{L,{d'}}^{\vec{m}'}\left(\Omega'\right) \right)\right)$ the finite generated left (right) $\mathbb{F}$-module of all skew polynomials $F\in I^{\vec{m}'}\left(\Omega'\right)$ (or $F\in I_L^{\vec{m}'}\left(\Omega'\right)$) such that $\deg(F)\leq d'\ \left(\deg(F)=d'\right)$.
\end{definition}

\begin{proposition}\label{Prop}
Let $\Omega'=\{{\bf{b_1}}, \dots ,{\bf{b_h}}\} \subseteq \mathbb{F}^n$ be a finite set and consider $\vec{m}'=(m'_1,\dots,m'_h)\in \mathcal{M}^h$ and $d'\in\mathbb{Z}_{>0}$. Then we have the following properties:

\smallskip

\begin{enumerate}
\item[$a)$] $V_{\leq d'}^{\vec{m}'}\left(\Omega'\right)=V_{d'}^{\vec{m}'}\left(\Omega'\right)\oplus V_{\leq d'-1}^{\vec{m}'}\left(\Omega'\right)\ \left(V_{L,\leq d'}^{\vec{m}'}\left(\Omega'\right)=V_{L,d'}^{\vec{m}'}\left(\Omega'\right)\oplus V_{L,\leq d'-1}^{\vec{m}'}\left(\Omega'\right)\right) \ ;$

\smallskip

\item[$b)$] $\dim_{\mathbb{F}} V_{\leq d'}^{\vec{m}'}\left(\Omega'\right)=\dim_{\mathbb{F}} V_{d'}^{\vec{m}'}\left(\Omega'\right)+\dim_{\mathbb{F}} V_{\leq d'-1}^{\vec{m}'}\left(\Omega'\right)$ 

\noindent $\left( \dim_{\mathbb{F}} V_{L,\leq d'}^{\vec{m}'}\left(\Omega'\right)=\dim_{\mathbb{F}} V_{L,d'}^{\vec{m}'}\left(\Omega'\right)+\dim_{\mathbb{F}} V_{L,\leq d'-1}^{\vec{m}'}\left(\Omega'\right) \right)$ ;

\smallskip

\item[$c)$] $n\cdot \dim_{\mathbb{F}} V_{d'-1}^{\vec{m}'}\left(\Omega'\right) \leq\dim_{\mathbb{F}} V_{d'}^{\vec{m}'}\left(\Omega'\right)\leq n^{d'}$

\noindent $\left( n\cdot \dim_{\mathbb{F}} V_{L,d'-1}^{\vec{m}'}\left(\Omega'\right) \leq\dim_{\mathbb{F}} V_{L,d'}^{\vec{m}'}\left(\Omega'\right)\leq n^{d'} \right)$ ;

\smallskip

\item[$d)$] consider
$m'\in\mathcal{M}$, ${\bf a}\in\mathbb{F}^n\setminus\Omega'$ and define $N:=\sum_{i=1}^{h}\left(\deg(m'_i)+1 \right)$; then

\smallskip

\begin{enumerate}
\item[$(i)$]
$0\leq\dim_{\mathbb{F}} V_{\leq N+\deg(m')+1}^{(\vec{m}',m')}\big(\Omega'\cup \{{\bf a} \}\big)-\dim_{\mathbb{F}} V_{\leq N+\deg(m')+1}^{(\vec{m}',x_im')}\big(\Omega'\cup \{{\bf a} \}\big)  \leq 1$

{\small
\noindent $\left(0\leq\dim_{\mathbb{F}} V_{L,\leq N+\deg(m')+1}^{(\vec{m}',m')}\big(\Omega'\cup \{{\bf a} \}\big)-\dim_{\mathbb{F}} V_{L,\leq N+\deg(m')+1}^{(\vec{m}',x_im')}\big(\Omega'\cup \{{\bf a} \}\big)  \leq 1\right)$~;
}

\smallskip

\item[$(ii)$] if we have
$$V^{(\vec{m}',m')}_{\leq d'}\left(\Omega'\cup\{{\bf a} \} \right)\supsetneq V^{(\vec{m}',x_im')}_{\leq d'}\left(\Omega'\cup\{{\bf a} \} \right)$$ 
$$\left( V^{(\vec{m}',m')}_{L,\leq d'}\left(\Omega'\cup\{{\bf a} \} \right)\supsetneq V^{(\vec{m}',x_im')}_{L,\leq d'}\left(\Omega'\cup\{{\bf a} \} \right) \right)\ ,$$ then for every $d\geq d'$ we get
$$V^{(\vec{m}',m')}_{\leq d}\left(\Omega'\cup\{{\bf a} \} \right)\supsetneq V^{(\vec{m}',x_im')}_{\leq d}\left(\Omega'\cup\{{\bf a} \} \right)$$
$$\left(V^{(\vec{m}',m')}_{L,\leq d}\left(\Omega'\cup\{{\bf a} \} \right)\supsetneq V^{(\vec{m}',x_im')}_{L,\leq d}\left(\Omega'\cup\{{\bf a} \} \right)\right)\ ;$$  

\smallskip

\item[$(iii)$] We have 
$$I^{(\vec{m}',m')}\left(\Omega'\cup\{{\bf a} \} \right)\supsetneq I^{(\vec{m}',x_im')}\left(\Omega'\cup\{{\bf a} \} \right)$$
$$\left(I_L^{(\vec{m}',m')}\left(\Omega'\cup\{{\bf a} \} \right)\supsetneq I_L^{(\vec{m}',x_im')}\left(\Omega'\cup\{{\bf a} \} \right)\right)$$ if and only if
$$V^{(\vec{m}',m')}_{\leq N+\deg(m')+1}\left(\Omega'\cup\{{\bf a} \} \right)\supsetneq V^{(\vec{m}',x_im')}_{\leq N+\deg(m')+1}\left(\Omega'\cup\{{\bf a} \} \right)$$
$$\left(V^{(\vec{m}',m')}_{L,\leq N+\deg(m')+1}\left(\Omega'\cup\{{\bf a} \} \right)\supsetneq V^{(\vec{m}',x_im')}_{L,\leq N+\deg(m')+1}\left(\Omega'\cup\{{\bf a} \} \right)\right)\ ,$$
and $I^{(\vec{m}')}\left(\Omega'\right)\supsetneq I^{(\vec{m}',o)}\left(\Omega'\cup\{{\bf a} \} \right)$ $\left(I_L^{(\vec{m}')}\left(\Omega'\right)\supsetneq I_L^{(\vec{m}',o)}\left(\Omega'\cup\{{\bf a} \} \right)\right)$ if and only if \
$V^{(\vec{m}')}_{\leq N}\left(\Omega'\right)\supsetneq V^{(\vec{m}',o)}_{\leq N}\left(\Omega'\cup\{{\bf a} \} \right)$ $\left(V^{(\vec{m}')}_{L,\leq N}\left(\Omega'\right)\supsetneq V^{(\vec{m}',o)}_{L,\leq N}\left(\Omega'\cup\{{\bf a} \} \right)\right)\ ;$

\smallskip

\item[$(iv)$] for every $j=1,\dots , h$, we have
$$\dim_{\mathbb{F}} V^{\vec{m}'_j}_{\leq N}\left(\Omega'_{(j)}\right)=n^{N_j+1}\cdot\left(\sum_{t=0}^{\deg (m'_j)}n^t\right)+\dim_{\mathbb{F}} V^{\vec{m}'_j}_{\leq N_j}\left(\Omega'_{(j)}\right)$$
$$\qquad \left(\dim_{\mathbb{F}} V^{\vec{m}'_j}_{\leq N}\left(\Omega'_{(j)}\right)=n^{N_j+1}\cdot\left(\sum_{t=0}^{\deg (m'_j)}n^t\right)+\dim_{\mathbb{F}} V^{\vec{m}'_j}_{\leq N_j}\left(\Omega'_{(j)}\right)\right)\ ,$$

where $N_j:=\sum_{i=1, i\neq j}^{k}\left[\deg(m'_i)+1\right]$ . 

\end{enumerate}
\end{enumerate}

\end{proposition}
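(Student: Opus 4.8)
The plan is to prove the five parts $a)$–$d)$ in order, since each one builds on the previous. For $a)$ and $b)$, the key observation is that every $F\in V^{\vec m'}_{\leq d'}(\Omega')$ splits uniquely as its homogeneous component of degree exactly $d'$ plus a polynomial of degree $\leq d'-1$, and that both summands again lie in the ideal $I^{\vec m'}(\Omega')$: this last fact follows because the conditions defining $I^{\vec m'}(\Omega')$ are linear and because, by Definition \ref{Der1.1} and the degree-preserving nature of the right division by $x_i-a_i$, taking $(\sigma,\delta)$-partial derivatives does not raise degree, so the top-degree part of $F$ satisfies the same vanishing conditions. Hence the direct sum decomposition in $a)$ holds, and $b)$ is immediate from it. For the left versions one uses Lemma \ref{lema5bis} and works with $\prec_L$ in place of $\prec$.

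For $c)$, the upper bound $\dim_{\mathbb F} V^{\vec m'}_{d'}(\Omega')\leq n^{d'}$ is clear since there are exactly $n^{d'}$ monomials of degree $d'$ in $\mathcal M$ and $V^{\vec m'}_{d'}(\Omega')$ is a left $\mathbb F$-submodule of their span. For the lower bound, I would construct an injective left-$\mathbb F$-linear map $V^{\vec m'}_{d'-1}(\Omega')^{\oplus n}\hookrightarrow V^{\vec m'}_{d'}(\Omega')$ by sending $(F_1,\dots,F_n)\mapsto \sum_{i=1}^n x_i\,F_i$; one checks using Lemma \ref{Der1.7} (the product rule for partial derivatives) together with Theorem \ref{Productrule} that multiplication on the left by a variable $x_i$ preserves membership in $I^{\vec m'}(\Omega')$, so this map lands where claimed, and it is injective because distinct monomials cannot coincide after prepending different leading letters. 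Comparing dimensions gives $n\cdot\dim_{\mathbb F}V^{\vec m'}_{d'-1}(\Omega')\leq \dim_{\mathbb F}V^{\vec m'}_{d'}(\Omega')$.

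Part $d)$ is the substantive one, and $(i)$ is where the main obstacle lies. The inequality $\geq 0$ is trivial from the inclusion $V^{(\vec m',x_im')}_{\leq N+\deg(m')+1}\subseteq V^{(\vec m',m')}_{\leq N+\deg(m')+1}$. For the bound $\leq 1$ one wants to show that the quotient of these two modules has $\mathbb F$-dimension at most one; the natural candidate for the functional whose kernel is the smaller module is $F\mapsto \Delta^{x_im'}_{\bf a}F({\bf a})$, but one must verify this is left-$\mathbb F$-linear (which follows from Lemma \ref{Der1.5}) and — the delicate point — that it is \emph{well-defined and nonzero at the right degree}, i.e.\ that Lemma \ref{3.1.8} or \ref{3.1.9} actually produces a witnessing polynomial of degree $\leq N+\deg(m')+1$ whenever the strict inclusion of ideals holds. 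This is exactly the content that $(iii)$ will extract from $(i)$ and $(ii)$, so the proof of $(i)$ should be arranged to feed directly into it. I would prove $(ii)$ by the standard ``saturate by multiplying with a high-degree polynomial from Lemma \ref{3.1.8}'' argument: given a polynomial in the larger module not in the smaller of degree $\leq d'$, multiplying on the left by a suitable monic polynomial constructed via Lemma \ref{3.1.8} lets one climb to any degree $d\geq d'$ while preserving the separating property, which is essentially the mechanism already used in the proof of Lemma \ref{3.1.9}. Part $(iii)$ then follows: one direction is the trivial restriction of the inclusion to bounded-degree submodules, and the converse is precisely Lemma \ref{3.1.9}, which guarantees that if the ideals differ then a witness of degree $\leq N+\deg(m')+1$ exists, hence the bounded modules already differ. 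Finally, $(iv)$ is a bookkeeping identity: splitting $N=N_j+(\deg(m'_j)+1)$ and iterating part $a)$ across the $\deg(m'_j)+1$ top degrees, at each of which $c)$ forces the graded piece associated with the extra point ${\bf a}_j$ to have full dimension $n^{N_j+1}$ times the appropriate geometric-series factor — this is where Lemma \ref{3.1.8} guarantees the graded pieces are as large as possible — leaves exactly the residual term $\dim_{\mathbb F}V^{\vec m'_j}_{\leq N_j}(\Omega'_{(j)})$. The hard part throughout is controlling degrees: every step needs the fact that partial derivatives and left multiplication by variables interact predictably with the degree filtration, and the precise degree bound $N+\deg(m')+1$ has to be tracked carefully so that $(i)$ and $(iii)$ match up.
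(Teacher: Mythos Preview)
Your argument for part $a)$ contains a genuine error. You claim that if $F\in I^{\vec m'}(\Omega')$ has degree $\leq d'$, then its homogeneous component of degree exactly $d'$ again lies in $I^{\vec m'}(\Omega')$. This is false: take $n=1$, $\sigma=\mathrm{Id}$, $\delta=0$, $\Omega'=\{a\}$ with $a\neq 0$, and $\vec m'=(o)$. Then $F(x)=x-a\in I(\{a\})$, but its degree-$1$ homogeneous part $x$ does not vanish at $a$. The reasoning you offer (``derivatives do not raise degree, so the top part inherits the vanishing'') conflates the degree filtration with the grading: evaluation at a point mixes all homogeneous components, so there is no reason the top slice should satisfy the same conditions. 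The paper sidesteps this by treating $a)$ as a formal consequence of the (admittedly loose) definition; the workable reading is that $V^{\vec m'}_{d'}(\Omega')$ stands for an abstract complement of $V^{\vec m'}_{\leq d'-1}(\Omega')$ inside $V^{\vec m'}_{\leq d'}(\Omega')$ (which exists because $\mathbb{F}$ is a division ring), or equivalently that $\dim V^{\vec m'}_{d'}$ is shorthand for $\dim V^{\vec m'}_{\leq d'}-\dim V^{\vec m'}_{\leq d'-1}$. Under that reading $a)$ and $b)$ are tautological, and your argument for $c)$ goes through.

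On $d)(ii)$ you are overcomplicating matters. The paper's proof is one line: if $F\in V^{(\vec m',m')}_{\leq d'}\setminus V^{(\vec m',x_im')}_{\leq d'}$, then for any $d\geq d'$ the same $F$ lies in $V^{(\vec m',m')}_{\leq d}\setminus V^{(\vec m',x_im')}_{\leq d}$, simply because $V_{\leq d'}\subseteq V_{\leq d}$ and the condition $\Delta^{x_im'}_{\bf a}F({\bf a})\neq 0$ does not depend on the degree bound. No multiplication or saturation is needed. Your treatment of $d)(i)$, $(iii)$, and $(iv)$ is in line with the paper's: in particular the linear-functional argument for $(i)$ is exactly what the paper does, and your plan to invoke Lemma~\ref{3.1.9} for the nontrivial direction of $(iii)$ matches the paper's use of Lemma~\ref{3.1.8} in a minimal-leading-monomial contradiction.
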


\begin{proof}
For simplicity, we will give here the proof of 
$(i)$ to $(iv)$ only for the right cases, leaving to the reader the other cases of the statement written in parentheses. 

\smallskip

The above properties $a), b)$ and $c)$ follow from the definitions of $V_{\leq d'}^{\vec{m}'}\left(\Omega'\right)$ and $V_{d'}^{\vec{m}'}\left(\Omega'\right)$.
To prove $d)(i)$, firstly note that 
$$V_{\leq N+\deg(m')+1}^{(\vec{m}',m')}\big(\Omega'\cup \{{\bf a} \}\big)\supseteq V_{\leq N+\deg(m')+1}^{(\vec{m}',x_im')}\big(\Omega'\cup \{{\bf a} \}\big)\ ,$$ which clearly gives the first inequality in $d)(i)$. Secondly, assume that 
$$V_{\leq N+\deg(m')+1}^{(\vec{m}',m')}\big(\Omega'\cup \{{\bf a} \}\big)\supsetneq V_{\leq N+\deg(m')+1}^{(\vec{m}',x_im')}\big(\Omega'\cup \{{\bf a} \}\big)\ .$$
Suppose that there are $F_1,F_2\in V_{\leq N+\deg(m')+1}^{(\vec{m}',m')}\big(\Omega'\cup \{{\bf a} \}\big)\setminus V_{\leq N+\deg(m')+1}^{(\vec{m}',x_im')}\big(\Omega'\cup \{{\bf a} \}\big)$ such that $\Delta_{{\bf a}}^{x_im'}F_t({\bf a})=\mu_t\in\mathbb{F}^*$ for $t=1,2$. By defining $G:=\mu_1^{-1}F_1+\mu_2^{-1}F_2$, we see that $G\in V_{\leq N+\deg(m')+1}^{(\vec{m}',x_im')}\big(\Omega'\cup \{{\bf a} \}\big)$. This shows that
$$F_2\in \langle F_1 \rangle\oplus V_{\leq N+\deg(m')+1}^{(\vec{m}',x_im')}\big(\Omega'\cup \{{\bf a} \}\big)\ ,$$
i.e., $V_{\leq N+\deg(m')+1}^{(\vec{m}',m')}\big(\Omega'\cup \{{\bf a} \}\big)=\langle F_1 \rangle\oplus V_{\leq N+\deg(m')+1}^{(\vec{m}',x_im')}\big(\Omega'\cup \{{\bf a} \}\big)$. Therefore, we get
$$\dim_{\mathbb{F}} V_{\leq N+\deg(m')+1}^{(\vec{m}',m')}\big(\Omega'\cup \{{\bf a} \}\big)-\dim_{\mathbb{F}} V_{\leq N+\deg(m')+1}^{(\vec{m}',x_im')}\big(\Omega'\cup \{{\bf a} \}\big)  \leq 1 \ .$$
The property $d)(ii)$ follows from the fact that 
$V^{(\vec{m}',m')}_{\leq d'}\left(\Omega'\cup\{{\bf a} \} \right)\subseteq V^{(\vec{m}',m')}_{\leq d}\left(\Omega'\cup\{{\bf a} \} \right)$ for every $d\geq d'$. Now, let us prove $d)(iii)$. Observe that the ``if'' parts are immediate. So, assume first that $I^{(\vec{m}')}\left(\Omega'\right)\supsetneq I^{(\vec{m}',o)}\left(\Omega'\cup\{{\bf a} \} \right)$. By contradiction, suppose that 
$$V^{(\vec{m}')}_{\leq N}\left(\Omega'\right)=V^{(\vec{m}',o)}_{\leq N}\left(\Omega'\cup\{{\bf a} \} \right)\ .$$ Then from $d)(ii)$ it follows that $V^{(\vec{m}')}_{\leq d}\left(\Omega'\right)=V^{(\vec{m}',o)}_{\leq d}\left(\Omega'\cup\{{\bf a} \} \right)$ for every $d\leq N$. Moreover, if
$V^{(\vec{m}')}_{\leq d}\left(\Omega'\right)\supsetneq V^{(\vec{m}',o)}_{\leq d}\left(\Omega'\cup\{{\bf a} \} \right)$ for some $d>N$, then there exists $d'>N$ such that $V^{(\vec{m}')}_{\leq d'}\left(\Omega'\right)\supsetneq V^{(\vec{m}',o)}_{\leq d'}\left(\Omega'\cup\{{\bf a} \} \right)$ and $V^{(\vec{m}')}_{\leq d'-1}\left(\Omega'\right)=V^{(\vec{m}',o)}_{\leq d'-1}\left(\Omega'\cup\{{\bf a} \} \right)$. Thus, there is $F\in V^{(\vec{m}')}_{d'}\left(\Omega'\right)$ such that $F({\bf a})\neq 0$ and $LM(F)$ is minimum possible with respect to a fixed ordering $\prec$ of $\mathcal{M}$ preserving degrees. 
By Lemma \ref{3.1.8}, we know that there exists $G\in V^{(\vec{m}')}_{\leq N}\left(\Omega'\right)=V^{(\vec{m}',o)}_{\leq N}\left(\Omega'\cup\{{\bf a} \} \right)$ such that $LM(F)=\alpha m'LM(G)$ for some $\alpha\in\mathbb{F}$ and $m'\in\mathcal{M}$. Defining $H:=F-\alpha m' G$, we get $H\in V^{(\vec{m}')}_{d'}\left(\Omega'\right)$, $H({\bf a})=F({\bf a})-(\alpha m' G)({\bf a})=F({\bf a})\neq 0$ and $LM(H)\prec LM(F)$, but this is a contradiction by the minimality of $LM(F)$. Therefore, we have $V^{(\vec{m}')}_{\leq d}\left(\Omega'\right)=V^{(\vec{m}',o)}_{\leq d}\left(\Omega\cup\{{\bf a} \} \right)$ for every $d$, but this is impossible because $I^{(\vec{m}')}\left(\Omega'\right)\supsetneq I^{(\vec{m}',o)}\left(\Omega'\cup\{{\bf a} \} \right)$. Hence $V^{(\vec{m}')}_{\leq N}\left(\Omega'\right)\supsetneq V^{(\vec{m}',o)}_{\leq N}\left(\Omega'\cup\{{\bf a} \} \right)$. Let us note that by similar arguments as above, we can prove that if $I^{(\vec{m}',m')}\left(\Omega'\cup\{{\bf a} \} \right)\supsetneq I^{(\vec{m}',x_im')}\left(\Omega'\cup\{{\bf a} \} \right)$, then $V^{(\vec{m}',m')}_{\leq N+\deg(m')+1}\left(\Omega'\cup\{{\bf a} \} \right)\supsetneq V^{(\vec{m}',x_im')}_{\leq N+\deg(m)+1}\left(\Omega'\cup\{{\bf a} \} \right)$. 
Finally, we prove $d)(iv)$. 
From Lemma \ref{3.1.8}, it follows that 
$\dim_{\mathbb{F}} V^{\vec{m}'_j}_{N_j}\left(\Omega'_{(j)}\right)=n^{N_j}$
and, by iterating $c)$, we get $\dim_{\mathbb{F}} V^{\vec{m}'_j}_{N_j+h}\left(\Omega'_{(j)}\right)=n^{N_j+h}$ for any $h=0,\dots,\deg(m'_j)+1$. Moreover, 
by iterating $b)$, we obtain that
$$\dim_{\mathbb{F}} V^{\vec{m}'_j}_{\leq N}\left(\Omega'_{(j)}\right)=\dim_{\mathbb{F}} V^{\vec{m}'_j}_{\leq N_j+\deg(m'_j)+1}\left(\Omega'_{(j)}\right)=\dim_{\mathbb{F}} V^{\vec{m}'_j}_{N_j+\deg(m'_j)+1}\left(\Omega'_{(j)}\right)+$$
$$+\dim_{\mathbb{F}} V^{\vec{m}'_j}_{\leq N_j+\deg(m'_j)}\left(\Omega'_{(j)}\right)= n^{N_j+\deg(m'_j)+1}+\dim_{\mathbb{F}} V^{\vec{m}'_j}_{\leq N_j+\deg(m'_j)}\left(\Omega'_{(j)}\right)=\dots =$$
$$=n^{N_j+\deg(m'_j)+1}+\dots +n^{N_j+1}+\dim_{\mathbb{F}} V^{\vec{m}'_j}_{\leq N_j}\left(\Omega'_{(j)}\right)=n^{N_j+1}\cdot\left(\sum_{t=0}^{\deg (m'_j)}n^t\right)+\dim_{\mathbb{F}} V^{\vec{m}'_j}_{\leq N_j}\left(\Omega'_{(j)}\right)\ ,$$
that is,
$\dim_{\mathbb{F}} V^{\vec{m}'_j}_{\leq N}\left(\Omega'_{(j)}\right)=n^{N_j+1}\cdot\left(\sum_{t=0}^{\deg(m'_j)}n^t\right)+\dim_{\mathbb{F}} V^{\vec{m}'_j}_{\leq N_j}\left(\Omega'_{(j)}\right)\ .$
\end{proof}

The following result shows some conditions equivalent to Definition \ref{pd-independent} (2).

\begin{theorem}
Let $\Omega=\{{\bf{a_1}}, \dots ,{\bf{a_k}}\} \subseteq \mathbb{F}^n$ be a finite set and consider 
$\vec{m}=(m_1,\dots,m_k)$ in $\mathcal{M}^k$ with either $m_j=x_{j_{s(j)}}\cdots x_{j_2}x_{j_1}$, or $m_j=o$, for all $j=1,\dots , k$. Define $x_{j_0}:=o$, for every $j=1,\dots,k$, and $N:=\sum_{i=1}^{k}\left[\deg(m_i)+1\right]$.
Then the following are equivalent:
\begin{enumerate}
\item $\Omega$ is left (right) DP-independent of type $\vec{m}=(m_1,\dots,m_k)$ ;
\item for every $j=1,\dots,k$, we have 
$${\bf a_j}\notin Z^o\left(I^{\vec{m}_j}(\Omega_{(j)})\right)\cup \left[\bigcup_{i=1}^{s(j)}Z^{x_{j_i}\dots x_{j_1}}\Big(I^{\vec{m}_j,x_{j_{i-1}}\dots x_{j_1}}\left(\Omega_{(j)}\cup \{{\bf a_j}\}\right)\Big)\right]$$
$$\left({\bf a_j}\notin Z_L^o\left(I_L^{\vec{m}_j}(\Omega_{(j)})\right)\cup \left[\bigcup_{i=1}^{s(j)}Z_L^{x_{j_i}\dots x_{j_1}}\Big(I_L^{\vec{m}_j,x_{j_{i-1}}\dots x_{j_1}}\left(\Omega_{(j)}\cup \{{\bf a_j}\}\right)\Big)\right]\right) \ ;$$
\item for every $j=1,\dots,k$, we get
$$\dim_{\mathbb{F}} V^{\vec{m}_j}_{\leq N}\left(\Omega_{(j)}\right)=\dim_{\mathbb{F}} V^{\vec{m}_j,m_j}_{\leq N}\left(\Omega_{(j)}\cup \{ {\bf a_j} \}\right)+\deg(m_j)+1$$
$$\left(\dim_{\mathbb{F}} V^{\vec{m}_j}_{L,\leq N}\left(\Omega_{(j)}\right)=\dim_{\mathbb{F}} V^{\vec{m}_j,m_j}_{L,\leq N}\left(\Omega_{(j)}\cup \{ {\bf a_j} \}\right)+\deg(m_j)+1\right)\ ;$$
\item for every $j=1,\dots,k$, there exists $F\in I^{\vec{m}_j}\left(\Omega_{(j)}\right)$ $\left(F\in I_L^{\vec{m}_j}\left(\Omega_{(j)}\right)\right)$ such that $F({\bf a_j})\neq 0$ $\left(F_L({\bf a_j})\neq 0\right)$ and $\Delta_{{\bf a_j}}^{x_{j_{i}}\cdots x_{j_1}}F({\bf a_j})=0$ $\left(\left(\Delta_{{\bf a_j},L}^{x_{j_{i}}\cdots x_{j_1}}F\right)_L({\bf a_j})=0\right)$ for every $i=1,\dots,s(j)$ .
\end{enumerate}
\end{theorem}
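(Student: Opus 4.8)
The plan is to fix an index $j\in\{1,\dots,k\}$ and show that the ``$j$-th instances'' of $(1)$--$(4)$ (obtained by restricting the quantifier $\forall j$ to the single value $j$) are equivalent; since by Definition \ref{pd-independent}(2) each of the four conditions is a conjunction over $j$ of its $j$-th instance, this suffices. I write $\Omega_{(j)}=\Omega\setminus\{{\bf a_j}\}$, $\vec{m}_j=(m_1,\dots,m_{j-1},m_{j+1},\dots,m_k)$, $N_j=\sum_{i\ne j}[\deg(m_i)+1]$ (so $N=N_j+\deg(m_j)+1$), and $m_j=x_{j_{s(j)}}\cdots x_{j_1}$; only the non-parenthetical (``left'') version is treated, the other being obtained by the symmetric argument with Lemma \ref{lema5bis} and \eqref{For2.2} replacing \cite[Lemma 5]{evaluationandinterpolation} and \eqref{For2.1}, and left $(\sigma,\delta)$-conjugates replacing right ones. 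For $(1)\Leftrightarrow(2)$ I would just unwind definitions: the $j$-th instance of $(1)$ says that the chain $I^{\vec{m}_j}(\Omega_{(j)})\supseteq I^{\vec{m}_j,o}(\Omega_{(j)}\cup\{{\bf a_j}\})\supseteq I^{\vec{m}_j,x_{j_1}}(\Omega_{(j)}\cup\{{\bf a_j}\})\supseteq\cdots\supseteq I^{\vec{m}_j,m_j}(\Omega_{(j)}\cup\{{\bf a_j}\})$ is strict at every link, and since membership in $I^{\vec{m}_j,x_{j_{i-1}}\cdots x_{j_1}}(\Omega_{(j)}\cup\{{\bf a_j}\})$ already forces $\Delta_{{\bf a_j}}^{o}F({\bf a_j})=\Delta_{{\bf a_j}}^{x_{j_1}}F({\bf a_j})=\cdots=\Delta_{{\bf a_j}}^{x_{j_{i-1}}\cdots x_{j_1}}F({\bf a_j})=0$, Definition \ref{def-zeros} turns strictness of the $i$-th link into ${\bf a_j}\notin Z^{x_{j_i}\cdots x_{j_1}}\!\big(I^{\vec{m}_j,x_{j_{i-1}}\cdots x_{j_1}}(\Omega_{(j)}\cup\{{\bf a_j}\})\big)$, and strictness of the first link into ${\bf a_j}\notin Z^{o}\!\big(I^{\vec{m}_j}(\Omega_{(j)})\big)$; conjoining these over $i=1,\dots,s(j)$ is exactly the $j$-th instance of $(2)$.

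For $(1)\Leftrightarrow(3)$ I would transfer the ideal chain to the chain of bounded-degree modules $V^{\vec{m}_j}_{\le N}(\Omega_{(j)})\supseteq V^{\vec{m}_j,o}_{\le N}(\Omega_{(j)}\cup\{{\bf a_j}\})\supseteq\cdots\supseteq V^{\vec{m}_j,m_j}_{\le N}(\Omega_{(j)}\cup\{{\bf a_j}\})$: parts $d)(ii)$ and $d)(iii)$ of Proposition \ref{Prop} say each strict ideal inclusion is equivalent to the corresponding strict module inclusion, so $(1)$ at $j$ is equivalent to this chain of $\deg(m_j)+1$ links being strict. By Proposition \ref{Prop}$\,d)(i)$ — and the one-line linear-combination argument for the first link (an $\mathbb{F}$-combination of two polynomials with nonzero value at ${\bf a_j}$ has value $0$ there) — consecutive modules drop $\dim_{\mathbb{F}}$ by at most $1$; hence all links are strict if and only if the total drop attains its maximum $\deg(m_j)+1$, which is precisely the dimension identity in $(3)$.

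The implication $(2)\Rightarrow(4)$ is a linear clean-up. From $(2)$ I pick $F\in I^{\vec{m}_j}(\Omega_{(j)})$ with $F({\bf a_j})\ne 0$ and, for $i=1,\dots,s(j)$, some $G_i\in I^{\vec{m}_j,x_{j_{i-1}}\cdots x_{j_1}}(\Omega_{(j)}\cup\{{\bf a_j}\})$ with $\Delta_{{\bf a_j}}^{x_{j_i}\cdots x_{j_1}}G_i({\bf a_j})\ne 0$, then set $F^{(0)}:=F$ and recursively $F^{(i)}:=F^{(i-1)}-c_iG_i$, where $c_i:=\Delta_{{\bf a_j}}^{x_{j_i}\cdots x_{j_1}}F^{(i-1)}({\bf a_j})\cdot\big(\Delta_{{\bf a_j}}^{x_{j_i}\cdots x_{j_1}}G_i({\bf a_j})\big)^{-1}$. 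Since $G_i$ lies in that ideal, subtracting $c_iG_i$ leaves $F^{(i-1)}({\bf a_j})$ and the values $\Delta_{{\bf a_j}}^{x_{j_1}}F^{(i-1)}({\bf a_j}),\dots,\Delta_{{\bf a_j}}^{x_{j_{i-1}}\cdots x_{j_1}}F^{(i-1)}({\bf a_j})$ unchanged, while the linearity of the right $(\sigma,\delta)$-partial derivatives (Lemma \ref{Der1.5}) forces $\Delta_{{\bf a_j}}^{x_{j_i}\cdots x_{j_1}}F^{(i)}({\bf a_j})=0$; so $F':=F^{(s(j))}$ witnesses $(4)$.

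The remaining and hardest step is $(4)\Rightarrow(1)$. Given a witness $F$ of the $j$-th instance of $(4)$, with $F({\bf a_j})=c\ne 0$ and $\Delta_{{\bf a_j}}^{x_{j_i}\cdots x_{j_1}}F({\bf a_j})=0$ for $i=1,\dots,s(j)$, I would build recursively $H_0:=F$ and $H_r:=(x_{\ell_r}-b_r)H_{r-1}$ for $r=1,\dots,s(j)$; these remain in $I^{\vec{m}_j}(\Omega_{(j)})$ because it is a left ideal. Setting $\mu_0:=c$ and $\mu_r:=\Delta_{{\bf a_j}}^{x_{j_r}\cdots x_{j_1}}H_r({\bf a_j})$, a computation with the Leibniz-type formula \eqref{For2.1} and the product rule (Theorem \ref{Productrule}(1)) — using repeatedly that $(x_{\ell_r}-b_r)$ times a constant has degree $\le 1$, so that its $(\sigma,\delta)$-partial derivatives of order $\ge 2$ vanish — shows that $\mu_r=\sigma_{\ell_r,j_r}(\mu_{r-1})$ and that, as long as $\mu_{r-1}\ne 0$, the value $H_r({\bf a_j})$ and the values $\Delta_{{\bf a_j}}^{x_{j_t}\cdots x_{j_1}}H_r({\bf a_j})$ for all $t\ne r$ with $1\le t\le s(j)$ either vanish automatically or are killed by the single choice $b_r:=({\bf a_j}^{\,\mu_{r-1}})_{\ell_r}$ (which annihilates the one for $t=r-1$). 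One then chooses $\ell_r$ with $\sigma_{\ell_r,j_r}(\mu_{r-1})\ne 0$, which is possible because $\mu_{r-1}\ne 0$ makes $\sigma(\mu_{r-1})$ invertible in $\mathbb{F}^{n\times n}$ and an invertible matrix over a division ring has no zero column; this gives $\mu_r\ne 0$, hence $H_r\in I^{\vec{m}_j,x_{j_{r-1}}\cdots x_{j_1}}(\Omega_{(j)}\cup\{{\bf a_j}\})\setminus I^{\vec{m}_j,x_{j_r}\cdots x_{j_1}}(\Omega_{(j)}\cup\{{\bf a_j}\})$. Combined with $F\in I^{\vec{m}_j}(\Omega_{(j)})\setminus I^{\vec{m}_j,o}(\Omega_{(j)}\cup\{{\bf a_j}\})$, this makes every link in the chain of the first paragraph strict, i.e. the $j$-th instance of $(1)$, and we are done. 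The main obstacle is precisely this last construction: the bookkeeping showing that \emph{all} the unwanted evaluations of $H_r$ at ${\bf a_j}$ vanish at once, propagated through the recursion, is the only part that is not a direct unwinding of the definitions together with the dimension counting already packaged in Proposition \ref{Prop}.
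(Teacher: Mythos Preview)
Your proof is correct and follows essentially the same architecture as the paper's: $(1)\Leftrightarrow(2)$ and $(1)\Leftrightarrow(3)$ are handled exactly as in the paper (unwinding Definitions~\ref{def-ideals} and \ref{def-zeros}, then invoking Proposition~\ref{Prop}$\,d)(i)$--$(iii)$), and your $(2)\Rightarrow(4)$ via a recursive linear clean-up is the paper's $(1)\Rightarrow(4)$ in slightly different clothing (the paper additionally quotes Lemma~\ref{3.1.9} to control the degrees of the $F_t$'s, which is not needed for $(4)$ as stated).

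The only substantive divergence is in $(4)\Rightarrow(1)$. The paper first normalizes so that $F({\bf a_j})=1$ and then simply takes $H_r:=\big(x_{j_r}-({\bf a_j})_{j_r}\big)H_{r-1}$, i.e.\ $\ell_r=j_r$ and $b_r=({\bf a_j})_{j_r}$; since $\sigma(1)=I$ one gets $\mu_r=\sigma_{j_r,j_r}(1)=1$ at every step, so no invertibility-of-$\sigma(\mu_{r-1})$ argument is needed and the ``bookkeeping'' you flag as the main obstacle collapses to a two-line computation. Your more general choice of $\ell_r$ (a nonzero entry in the $j_r$-th column of the invertible matrix $\sigma(\mu_{r-1})$) also works, and your recursion maintaining $H_r({\bf a_j})=0$ and $\Delta_{{\bf a_j}}^{x_{j_t}\cdots x_{j_1}}H_r({\bf a_j})=0$ for all $t\ne r$ is correct --- but the paper's specific choice sidesteps the whole issue.
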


\begin{proof}
For simplicity, we will give here only the proofs for the left cases.

The equivalence between \textit{(1)} and $\textit{(2)}$ follows from Definitions \ref{def-ideals} and \ref{def-zeros}. 

\medskip

\noindent \textit{(1)} $\Rightarrow$ \textit{(4)}. For every $j=1,\dots,k$, we have 
$$I^{\vec{m}_j}(\Omega_{(j)}) \supsetneq I^{\vec{m}_j,o}(\Omega_{(j)}\cup \{{\bf{a}_j}\})\supsetneq I^{\vec{m}_j,x_{j_1}}(\Omega_{(j)}\cup \{{\bf{a}_j}\})$$ and
$I^{\vec{m}_j,x_{j_i}\cdots x_{j_1}}(\Omega_{(j)}\cup \{{\bf{a}_j}\})\supsetneq I^{\vec{m}_j,x_{j_{i+1}}x_{j_i}\cdots x_{j_1}}(\Omega_{(j)}\cup \{{\bf{a}_j}\})$ for any $i=1,\dots,s(j)-1$. Then, by Lemma \ref{3.1.9} we deduce that there exist $F_0\in I^{\vec{m}_j}(\Omega_{(j)}) \setminus I^{\vec{m}_j,o}(\Omega_{(j)}\cup \{{\bf{a}_j}\})$, $F_1\in I^{\vec{m}_j,o}(\Omega_{(j)}\cup \{{\bf{a}_j}\})\setminus I^{\vec{m}_j,x_{j_1}}(\Omega_{(j)}\cup \{{\bf{a}_j}\})$
and 
$$F_{i+1}\in I^{\vec{m}_j,x_{j_i}\cdots x_{j_1}}(\Omega_{(j)}\cup \{{\bf{a}_j}\})\setminus I^{\vec{m}_j,x_{j_{i+1}}x_{j_i}\cdots x_{j_1}}(\Omega_{(j)}\cup \{{\bf{a}_j}\})$$ for any $i=1,\dots,s(j)-1$, such that $\deg(F_t)\leq N_j+t$ for every $t=0,1,\dots, s(j)$. Thus, there exist some suitable $\lambda_i\in\mathbb{F}$ such that $F:=F_0+\lambda_1F_1+\dots+\lambda_{s(j)}F_{s(j)}\in I^{\vec{m}_j}\left(\Omega_{(j)}\right)$, $F({\bf a_j})\neq 0$ and $\Delta_{{\bf a_j}}^{x_{j_{i}}\cdots x_{j_1}}F({\bf a_j})=0$ for every $i=1,\dots,s(j)$. 

\medskip

\noindent \textit{(4)} $\Rightarrow$ \textit{(1)}. Consider $F\in I^{\vec{m}_j}\left(\Omega_{(j)}\right)$ such that $F({\bf a_j})\neq 0$ and $\Delta_{{\bf a_j}}^{x_{j_{i}}\cdots x_{j_1}}F({\bf a_j})=0$ for every $i=1,\dots,s(j)$. Then we get $I^{\vec{m}_j}(\Omega_{(j)}) \supsetneq I^{\vec{m}_j,o}(\Omega_{(j)}\cup \{{\bf{a}_j}\})$. Moreover, multiplying $F$ by a suitable scalar, we can assume that $F({\bf a_j})=1$. So, define $F_1:=\left(x_{j_1}-({\bf a_j})_{j_1}\right)F$. Note that $F_1\in I^{\vec{m}_j,o}(\Omega_{(j)}\cup \{{\bf{a}_j}\})$. Furthermore, we have
$$\Delta_{{\bf a_j}}^{x_{j_1}}F_1=\left(x_{j_1}-({\bf a_j})_{j_1}\right)\Delta_{{\bf a_j}}^{x_{j_1}}F+\Delta_{{\bf a_j}}^{x_{j_1}}\big(\left(x_{j_1}-({\bf a_j})_{j_1}\right)F({\bf a_j}) \big)=\left(x_{j_1}-({\bf a_j})_{j_1}\right)\Delta_{{\bf a_j}}^{x_{j_1}}F+1\ .$$
Hence $\Delta_{{\bf a_j}}^{x_{j_1}}F_1({\bf a_j})=1\neq 0$ and this shows that $I^{\vec{m}_j,o}(\Omega_{(j)}\cup \{{\bf{a}_j}\})\supsetneq I^{\vec{m}_j,x_{j_1}}(\Omega_{(j)}\cup \{{\bf{a}_j}\})$. Now, define $F_2:=\left(x_{j_2}-({\bf a_j})_{j_2}\right)F_1$ and observe that 
$$\Delta_{{\bf a_j}}^{x_{j_1}}F_2=\left(x_{j_2}-({\bf a_j})_{j_2}\right)\Delta_{{\bf a_j}}^{x_{j_1}}F_1+\Delta_{{\bf a_j}}^{x_{j_1}}\big(\left(x_{j_2}-({\bf a_j})_{j_2}\right)F_1({\bf a_j}) \big)=\left(x_{j_2}-({\bf a_j})_{j_2}\right)\Delta_{{\bf a_j}}^{x_{j_1}}F_1\ ,$$
$$\Delta_{{\bf a_j}}^{x_{j_2}x_{j_1}}F_2=\left(x_{j_2}-({\bf a_j})_{j_2}\right)\Delta_{{\bf a_j}}^{x_{j_2}x_{j_1}}F_1+1=\left(x_{j_2}-({\bf a_j})_{j_2}\right)\left(x_{j_1}-({\bf a_j})_{j_1}\right)\Delta_{{\bf a_j}}^{x_{j_2}x_{j_1}}F+$$
$$+\left(x_{j_2}-({\bf a_j})_{j_2}\right)\Delta_{{\bf a_j}}^{x_{j_2}}\big( \left(x_{j_1}-({\bf a_j})_{j_1}\right)\Delta_{{\bf a_j}}^{x_{j_1}}F({\bf a_j})\big)+1 = $$
$$=\left(x_{j_2}-({\bf a_j})_{j_2}\right)\left(x_{j_1}-({\bf a_j})_{j_1}\right)\Delta_{{\bf a_j}}^{x_{j_2}x_{j_1}}F+\left(x_{j_2}-({\bf a_j})_{j_2}\right)\Delta_{{\bf a_j}}^{x_{j_2}}\big( \left(x_{j_1}-({\bf a_j})_{j_1}\right)\big)+1\ .$$
This gives $F_2\in I^{\vec{m}_j,x_{j_1}}(\Omega_{(j)}\cup \{{\bf{a}_j}\})$ with $\Delta_{{\bf a_j}}^{x_{j_2}x_{j_1}}F_2({\bf a_j})=1$, and this shows that 
$I^{\vec{m}_j,x_{j_1}}(\Omega_{(j)}\cup \{{\bf{a}_j}\})\supsetneq I^{\vec{m}_j,x_{j_2}x_{j_1}}(\Omega_{(j)}\cup \{{\bf{a}_j}\})$. By a recursive argument, one can see that \textit{(1)} holds.

\medskip

\noindent \textit{(1)} $\iff$ \textit{(3)}. 
By Proposition \ref{Prop} $d)(ii), (iii)$, we see that \textit{(1)} holds if and only if for every $j\in\{1,\dots,k\}$ we have
$$V^{\vec{m}_j}_{\leq N}\left(\Omega_{(j)}\right)\supsetneq V^{(\vec{m}_j,o)}_{\leq N}\left(\Omega_{(j)}\cup\{{\bf a_j} \} \right)
\supsetneq \cdots \supsetneq V^{(\vec{m}_j,m_j)}_{\leq N}\left(\Omega_{(j)}\cup\{{\bf a_j} \} \right)\ .$$
Thus, by Proposition \ref{Prop} $d)(i)$, we deduce that \textit{(1)} holds if and only if for every $j\in\{1,\dots,k\}$ we have
$$\dim_{\mathbb{F}} V^{\vec{m}_j}_{\leq N}\left(\Omega_{(j)}\right)=\dim_{\mathbb{F}} V^{\vec{m}_j,m_j}_{\leq N}\left(\Omega_{(j)}\cup \{ {\bf a_j} \}\right)+s(j)+1\ ,$$
i.e., $\dim_{\mathbb{F}} V^{\vec{m}_j}_{\leq N}\left(\Omega_{(j)}\right)=\dim_{\mathbb{F}} V^{\vec{m}_j,m_j}_{\leq N}\left(\Omega_{(j)}\cup \{ {\bf a_j} \}\right)+\deg(m_j)+1\ .$ 
\end{proof}

\medskip

The main result here is a Hermite-type interpolation theorem in $\mathcal{R}$ that generalizes the skew Lagrange interpolation given in \cite[Theorem 4]{evaluationandinterpolation} and \cite[Theorem 3]{zerosmartinez}, and it extends the cases $n=1$ given in \cite[Theorem 4.4]{Er1} and \cite[Corollary 41]{zerosmartinez}.

\begin{theorem}[\textbf{Skew Hermite-type interpolation}]\label{theorem 3.1.7} Let $\Omega=\{{\bf{a_1}},\dots,{\bf{a_k}}\} \subseteq \mathbb{F}^n$ be a finite set, $(m_1,\dots,m_k)\in \mathcal{M}^k$ and define $N:=\sum_{j=1}^k(\deg(m_j)+1)$. Then, the following conditions are equivalent:
\begin{itemize}
    \item[$1)$] $\Omega$ is left (right) DP-independent  of type $(m_1, \dots , m_k)$;
    \item[$2)$] there exist $F_i\in\mathcal{R}$ with $\deg(F_i)<N$ for $i=1,\dots,N$, such that the map $\psi: \langle F_1,\dots,F_N\rangle
    \to \mathbb{F}^{N}$ $\left(\psi_L: \langle F_1,\dots,F_N\rangle_L \to \mathbb{F}^{N} \right)$ defined by 
    $$F\mapsto (F({\bf{a_1}}),\dots,\Delta_{{\bf{a_1}}}^{m_1}F({\bf{a_1}}),\dots,
    F({\bf{a_k}}),\dots,\Delta_{{\bf{a_k}}}^{m_k}F({\bf{a_k}}))$$
    $$\left(\ F\mapsto (F_L({\bf{a_1}}),\dots,(\Delta_{{\bf{a_1}},L}^{m_1}F)_L({\bf{a_1}}),\dots,
    F_L({\bf{a_k}}),\dots,(\Delta_{{\bf{a_k}},L}^{m_k}F)_L({\bf{a_k}}))\ \right)$$
is a left (right) $\mathbb{F}$-module isomorphism, where $\langle F_1,\dots,F_N\rangle\ (\langle F_1,\dots,F_N\rangle_L)$ is the left (right) $\mathbb{F}$-module generated by the $F_j's$.

\item[$3)$] given any set of $N$ values in $\mathbb{F}$
$$\{b_{j,o},b_{j,x_{j_1}},b_{j,x_{j_2}x_{j_1}},\dots,b_{j,m_j}:j=1,2,\dots,k\}\ ,$$ 
$$\left(\ \{b_{j,o},b_{j,x_{j_{s(j)}}},b_{j,x_{j_{s(j)}}x_{j_{s(j)-1}}},\dots,b_{j,m_j}:j=1,2,\dots,k\}\ \right)\ ,$$
there exists a skew polynomial $F\in \mathcal{R}$ with $\deg(F)< N$ such that 
$$F({\bf{a_j}})=b_{j,o}, \Delta_{{\bf{a_j}}}^{x_{j_1}}F({\bf{a_j}})=b_{j,x_{j_1}},\Delta_{{\bf{a_j}}}^{x_{j_2}x_{j_1}}F({\bf{a_j}})=b_{j,x_{j_2}x_{j_1}}, \dots , \Delta_{{\bf{a_j}}}^{m_j}F({\bf{a_j}})=b_{j,m_j}$$ 
$$\left(F_L({\bf{a_j}})=b_{j,o}, (\Delta_{{\bf{a_j}},L}^{x_{j_{s(j)}}}F)_L({\bf{a_j}})=b_{j,x_{j_{s(j)}}}, \dots , (\Delta_{{\bf{a_j}},L}^{m_j}F)_L({\bf{a_j}})=b_{j,m_j}\right)$$
for all $j=1,\dots ,k$.
\end{itemize}
\end{theorem}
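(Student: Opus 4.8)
The plan is to prove the cycle of implications $1)\Rightarrow 2)\Rightarrow 3)\Rightarrow 1)$. I will carry out the \emph{left} version throughout (so using the right $(\sigma,\delta)$-evaluation $F(\mathbf{a})$, the right partial derivatives $\Delta_{\mathbf{a}}$, the left ideals $I^{\vec m}(\cdot)$ and left $\mathbb{F}$-modules, exactly as in Definitions \ref{def-ideals} and \ref{pd-independent}); the right version is obtained by the evident substitutions (left modules by right ones, $\Delta_{\mathbf{a}}$ by $\Delta_{\mathbf{a},L}$, left rescaling of scalars by right rescaling).

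For $1)\Rightarrow 2)$ the idea is to build an explicit ``selector basis''. Fix $j\in\{1,\dots,k\}$, write $m_j=x_{j_{s(j)}}\cdots x_{j_1}$ with the convention $x_{j_0}:=o$, and set $N_j:=\sum_{i\neq j}(\deg(m_i)+1)=N-s(j)-1$. By Definition \ref{pd-independent}, hypothesis $1)$ gives for this $j$ the strict chain $I^{\vec m_j}(\Omega_{(j)})\supsetneq I^{\vec m_j,o}(\Omega_{(j)}\cup\{\mathbf{a_j}\})\supsetneq\cdots\supsetneq I^{\vec m_j,m_j}(\Omega_{(j)}\cup\{\mathbf{a_j}\})$, so Lemma \ref{3.1.9} applies to each consecutive pair and produces, for each $t\in\{0,1,\dots,s(j)\}$, a skew polynomial $F_{j,t}$ lying in the larger but not the smaller of the two ideals; rescaling on the left we may assume $\Delta_{\mathbf{a_j}}^{x_{j_t}\cdots x_{j_1}}F_{j,t}(\mathbf{a_j})=1$. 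The two degree estimates in Lemma \ref{3.1.9} give $\deg(F_{j,0})\leq N_j$ and $\deg(F_{j,t})\leq N_j+\bigl((t-1)+1\bigr)=N_j+t\leq N_j+s(j)=N-1$, so all $N=\sum_{j=1}^k(s(j)+1)$ polynomials $F_{j,t}$ have degree $<N$. Since each $F_{j,t}\in I^{\vec m_j}(\Omega_{(j)})$, every coordinate of $\psi(F_{j,t})$ attached to a point $\mathbf{a_i}$ with $i\neq j$ vanishes; and by construction the coordinates attached to $\mathbf{a_j}$ of order $o,x_{j_1},\dots,x_{j_{t-1}}\cdots x_{j_1}$ vanish while the one of order $x_{j_t}\cdots x_{j_1}$ equals $1$. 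Ordering the $N$ coordinates of $\mathbb{F}^N$ as $(1,0),\dots,(1,s(1)),(2,0),\dots,(k,s(k))$ and the generators in the same way, the matrix of $\psi$ on $\langle F_{j,t}\rangle$ is block diagonal, with the block indexed by $j$ triangular with unit diagonal; such a matrix is invertible over the division ring $\mathbb{F}$. As $\psi$ is left $\mathbb{F}$-linear (Lemma \ref{Der1.5} together with the linearity of the $(\sigma,\delta)$-evaluation recorded after Definition \ref{def-eval}), it follows that the $F_{j,t}$ are left-linearly independent and that $\psi\colon\langle F_{j,t}\rangle\to\mathbb{F}^N$ is an $\mathbb{F}$-module isomorphism; relabelling $\{F_{j,t}\}$ as $F_1,\dots,F_N$ yields $2)$.

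The implication $2)\Rightarrow 3)$ is immediate: assemble the prescribed $N$ scalars into a vector $\mathbf{b}\in\mathbb{F}^N$ in the order used to define $\psi$; surjectivity of $\psi$ yields $F\in\langle F_1,\dots,F_N\rangle$ with $\psi(F)=\mathbf{b}$, and $\deg(F)\leq\max_i\deg(F_i)<N$ because $\deg$ is subadditive under addition and $\deg(\lambda F_i)=\deg(F_i)$ for $\lambda\neq0$ over a division ring. For $3)\Rightarrow 1)$, fix $j$ and $t\in\{0,1,\dots,s(j)\}$ and apply $3)$ with all prescribed values equal to $0$ except the value attached to $\Delta_{\mathbf{a_j}}^{x_{j_t}\cdots x_{j_1}}$, which is set to $1$; the resulting $F$ lies in $I^{\vec m_j,x_{j_{t-1}}\cdots x_{j_1}}(\Omega_{(j)}\cup\{\mathbf{a_j}\})$ but not in $I^{\vec m_j,x_{j_t}\cdots x_{j_1}}(\Omega_{(j)}\cup\{\mathbf{a_j}\})$ (for $t=0$ it lies in $I^{\vec m_j}(\Omega_{(j)})\setminus I^{\vec m_j,o}(\Omega_{(j)}\cup\{\mathbf{a_j}\})$). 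Letting $t$ run over $0,\dots,s(j)$ and $j$ over $1,\dots,k$ produces exactly the chains of strict inclusions required by Definition \ref{pd-independent}, so $\Omega$ is left DP-independent of type $(m_1,\dots,m_k)$.

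The genuinely delicate step is $1)\Rightarrow 2)$: one must simultaneously (i) keep every selector polynomial of degree $<N$, which forces careful bookkeeping matching the two distinct bounds of Lemma \ref{3.1.9} against $N_j$ and the auxiliary term $\deg(m')+1$, and (ii) establish invertibility of the $N\times N$ evaluation matrix over a possibly noncommutative $\mathbb{F}$, where determinants are unavailable and one argues directly from the block-triangular shape, keeping track of the side on which scalars act.
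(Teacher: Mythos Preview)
Your proof is correct and follows the same overall strategy as the paper: build ``selector'' polynomials of degree $<N$ whose images under $\psi$ form an invertible matrix, and for the converse extract witnesses to the strict ideal inclusions from preimages of standard basis vectors. The organizational details differ, however. For $1)\Rightarrow 2)$ the paper works \emph{sequentially}: it first handles $\mathbf{a_1}$ explicitly, then uses Proposition~\ref{prop 3.1.7} to pass to the initial segments $\{\mathbf{a_1},\dots,\mathbf{a_j}\}$ and invoke Lemma~\ref{3.1.9} there; the resulting $\psi(G_{j,i})$ form a full lower-triangular matrix (with unknown entries in all later coordinates, not just those attached to $\mathbf{a_j}$), which is then reduced to the identity by left row operations. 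You instead work \emph{in parallel}, applying Lemma~\ref{3.1.9} directly with $\Omega_{(j)}$ and $\vec{m}_j$ for each $j$, which is exactly what Definition~\ref{pd-independent}(2) provides; this bypasses Proposition~\ref{prop 3.1.7} entirely and yields the cleaner block-diagonal structure you describe. For the reverse direction the paper only pulls back $\vec{e}_{j,0}$ and then tacitly invokes the equivalence $(1)\Leftrightarrow(4)$ of the preceding theorem to get the full chain, whereas your $3)\Rightarrow 1)$ pulls back every $\vec{e}_{j,t}$ and verifies all strict inclusions of Definition~\ref{pd-independent} directly, making the argument more self-contained.
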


\begin{proof}
For simplicity, we will give here only the proof of the left version of the statement. The proof of its right version is very similar to that given here, keeping in mind the usual and suitable changes of notation. 

\smallskip

First of all, note that the equivalence between $2)$ and $3)$ is immediate.

\smallskip

\noindent $1)\Rightarrow 2)$ From Lemma  \ref{Der1.5}, it is evident that $\psi$ is a left $\mathbb{F}$-module homomorphism. Let ${\bf a_1}:=(a_{1_1},a_{1_2}, \dots ,a_{1_n})\in \mathbb{F}^{n}$. We start by defining the skew polynomial $G_{1,0} := 1$. Then, we see that $G_{1,0}({\bf a_1})=1$, $\deg(G_{1,0})=0<1$ and $\Delta_{{\bf a_1}}^{x_{1_j}\cdots x_{1_2}x_{1_1}}G_{1,0}({\bf a_1})=0$ for all $j=1,\dots,\deg(m_1)$. On the other hand, note that the skew polynomials 
$$G_{1,j}:=(x_{1_j}-a_{1_{j}})\cdots (x_{1_2}-a_{1_2})(x_{1_1}-a_{1_1})\in \mathbb{F}[x;\sigma,\delta]$$ 
are such that $G_{1,j}\in I^{x_{1_{j-1}}\cdots x_{1_2}x_{1_1}}(\{{\bf a_1}\})\setminus I^{x_{1_{j}}\cdots x_{1_2}x_{1_1}}(\{{\bf a_1}\})$, $\deg(G_{1,j})<j+1$ and $\Delta_{{\bf a_1}}^{x_{1_j}\cdots x_{1_2}x_{1_1}}G_{1,j}({\bf a_1})=1$ for all $j=1,\dots,\deg(m_1)$, where $I^{x_{1_{0}}}(\{{\bf a_1}\}):=I(\{{\bf a_1}\})$. 

Let ${\bf a_2}:=(a_{2_1},a_{2_2}, \dots ,a_{2_n})\in \mathbb{F}^{n}$. Since $\Omega$ is left DP-independent of type $(m_1, \dots ,m_k)$, by Proposition \ref{prop 3.1.7} and Lemma \ref{3.1.9}  there exists $F_{2,0}\in I^{m_1}(\{{\bf a_1}\})\setminus I^{m_1,o}(\{{\bf a_1},{\bf a_2}\})$ such that $\deg(F_{2,0})\leq \deg(m_1)+1$
and we can construct polynomials $G_{2,i}\in \mathcal{R}$ for $i=1,\dots,\deg (m_2)$ such that 
$G_{2,i}\in I^{m_1,x_{2_{i-1}}\cdots x_{2_2}x_{2_1}}(\{{\bf a_1},{\bf a_2}\})\setminus I^{m_1,x_{2_{i}}\cdots x_{2_2}x_{2_1}}(\{{\bf a_1},{\bf a_2}\})$, $\Delta_{{\bf a_2}}^{x_{2_i}\cdots x_{2_2}x_{2_1}}G_{2,i}({\bf a_2})=1$ and $\deg (G_{2,i})\leq \deg (m_1) + 1 + i$, for all $i=2,\dots,\deg(m_2)$. Then, by arguing as above, by Proposition \ref{prop 3.1.7} and Lemma \ref{3.1.9} we can construct for all ${\bf a_j}\in \Omega$ with $j=1,\dots,k$, skew polynomials  $G_{j,0},G_{j,1},\dots,G_{j,\deg(m_j)}\in \mathcal{R}$ such that
$$
\begin{array}{rcl}
  \psi(G_{j,0}) & = & (0, \dots ,0, \dots ,1,*,*, \dots ,*,*, \dots ,*, \dots ,*)
  \\ \psi(G_{j,1}) & = & (0, \dots ,0, \dots ,0,1,*, \dots ,*,*, \dots ,*, \dots ,*)
  \\ \vdots & = & \vdots
  \\ \psi(G_{j,\deg(m_j)}) & = & (0, \dots ,0, \dots ,0,0,0, \dots ,1,*, \dots ,*, \dots ,*)
\end{array}
$$
Thus, making left linear operations on all the polynomials $G_{j,0},G_{j,1},\dots,G_{j,\deg(m_j)}$ for $j=1,\dots,k$, we can obtain polynomials $\tilde{G}_{j,0},\tilde{G}_{j,1},\dots,\tilde{G}_{j,\deg(m_j)}\in \mathcal{R}$ such that
$$
\begin{array}{rcccl}
  \psi(\tilde{G}_{j,0}) & = & \vec{e}_{j,0} &:= & (0, \dots ,0, \dots ,1,0,0, \dots ,0, \dots ,0, \dots ,0)
  \\ \psi(\tilde{G}_{j,1})& = & \vec{e}_{j,1} &:= & (0, \dots ,0, \dots ,0,1,0, \dots ,0, \dots ,0, \dots ,0)
  \\ \vdots & = & \vdots & := & \vdots
  \\ \psi(\tilde{G}_{j,\deg(m_j)})& = & \vec{e}_{j,\deg(m_j)} &:= & (0, \dots ,0, \dots ,0,0,0, \dots ,1, \dots ,0, \dots ,0)
\end{array}
$$
for all $j=1, \dots ,k$. Therefore, given any $${\bf{b}}=(b_{1,0},b_{1,1}, \dots ,b_{1,\deg(m_1)}, \dots ,b_{j,0},b_{j,1}, \dots ,b_{j,\deg(m_j)}, \dots ,b_{k,0},b_{k,1}, \dots ,b_{k,\deg(m_k)})\in \mathbb{F}^{N}$$ it follows that
\begin{align*}
         {\bf{b}} &= \sum_{i=0}^{\deg(m_1)}b_{1,i}\vec{e}_{1,i}+ \dots +\sum_{i=0}^{\deg(m_j)}b_{j,i}\vec{e}_{j,i}+ \dots +\sum_{i=0}^{\deg(m_k)}b_{k,i}\vec{e}_{k,i} \\
                 &= \sum_{i=0}^{\deg(m_1)}b_{1,i}\psi(\tilde{G}_{1,i})+ \dots +\sum_{i=0}^{\deg(m_j)}b_{j,i}\psi(\tilde{G}_{j,i})+ \dots +\sum_{i=0}^{\deg(m_k)}b_{k,i}\psi(\tilde{G}_{k,i}) \\
                 &= \psi\left(\sum_{t=1}^{k}\left(\sum_{i=0}^{\deg(m_t)}b_{t,i}\tilde{G}_{t,i}\right)\right)
         \end{align*}
with $\deg \left(\sum_{t=1}^{k}\left(\sum_{i=0}^{\deg(m_t)}b_{t,i}\tilde{G}_{t,i}\right)\right)\leq N-1<N$ and we are done.

\smallskip

\noindent $2)\Rightarrow 1)$ Since $\psi$ is a surjective left $\mathbb{F}$-module homomorphism, then for each $\vec{e}_{j,0}\in \mathbb{F}^{N}$ with $j=1, \dots ,k$, as before, there exists a skew polynomial 
$$F_{j,0}\in 
I^{\vec{m}_j}\left(\Omega_{(j)}\right) \supsetneq I^{\vec{m}_j,o}(\Omega_{(j)}\cup\{{\bf a_j}\})\supsetneq \dots \supsetneq I^{\vec{m}_j,m_j}(\Omega_{(j)}\cup\{{\bf a_j}\})$$
for all $j=1, \dots ,k$, where $\vec{m}=(m_1,\dots,m_k)$. Hence, from Definition \ref{pd-independent} (2) it follows that $\Omega$ is left DP-independent of type $(m_1, \dots ,m_k)$. 
\end{proof}

In the special situation when $\Omega$ is left (right) DP-independent of type $(o, \dots ,o)$, i.e. $\Omega$ is left (right) P-independent (see Definition \ref{pd-independent}(3)), we obtain a necessary and sufficient condition to solve a skew Lagrange-type interpolation problem as follows.

\begin{corollary}[\textbf{Skew Lagrange-type interpolation}]\label{Tlagrange}
 Let $\Omega=\{{\bf{a_1}},\dots,{\bf{a_k}}\} \subseteq \mathbb{F}^n$ be a finite set. Then, the following conditions are equivalent:
 \begin{itemize}
     \item[$1)$] $\Omega$ is left (right) P-independent;
     \item[ $2)$] there exist $F_i\in\mathcal{R}$ with $\deg(F_i)<k$ for $i=1,\dots,k$, such that the map $\psi: \langle F_1,\dots,F_k\rangle 
    \to \mathbb{F}^{k}$ $\left(\psi_L: \langle F_1,\dots,F_k\rangle_L \to \mathbb{F}^{k} \right)$ defined by 
    $$F\mapsto (F({\bf{a_1}}), F({\bf{a_2}}),\dots, F({\bf{a_k}}))$$
    $$\left(\ F\mapsto (F_L({\bf{a_1}}), F_L({\bf{a_2}}),\dots, F_L({\bf{a_k}}))\ \right)$$
    is a left (right) $\mathbb{F}$-module isomorphism, where $\langle F_1,\dots,F_k\rangle\ (\langle F_1,\dots,F_k\rangle_L)$ is the left (right) $\mathbb{F}$-module generated by the $F_j's$ ;
\item[$3)$] given any set $\{b_{1},b_{2},\dots,b_{k}\}$ of $k$ values in $\mathbb{F}$, there exists a skew polynomial $F\in \mathcal{R}$ with $\deg(F)< k$ such that 
$$F({\bf{a_1}})=b_{1}, F({\bf{a_2}})=b_{2}, \dots , F({\bf{a_k}})=b_{k}$$ 
$$\left(\ F_L({\bf{a_1}})=b_{1}, F_L({\bf{a_2}})=b_{2}, \dots , F_L({\bf{a_k}})=b_{k}\ \right)\ .$$
\end{itemize}  
\end{corollary}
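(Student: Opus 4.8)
The plan is to obtain this corollary as the special case $\vec{m}=(o,\dots,o)\in\mathcal{M}^k$ of Theorem~\ref{theorem 3.1.7}. First I would recall that, by Definition~\ref{pd-independent}(3), asserting that $\Omega$ is left (right) P-independent is \emph{by definition} the same as asserting that $\Omega$ is left (right) DP-independent of type $(o,\dots,o)$; this identifies condition $1)$ of the corollary with condition $1)$ of Theorem~\ref{theorem 3.1.7} in the case $m_1=\dots=m_k=o$.

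Next I would check how the quantities appearing in Theorem~\ref{theorem 3.1.7} degenerate here. Since the empty string $o$ has length zero, $\deg(m_j)=0$ for every $j$, whence $N=\sum_{j=1}^{k}(\deg(m_j)+1)=k$. Moreover, by Definition~\ref{Der1.1} one has $\Delta^{o}_{{\bf a}}F({\bf a})=F({\bf a})$ and $(\Delta^{o}_{{\bf a},L}F)_L({\bf a})=F_L({\bf a})$, so that for each ${\bf a_j}$ the only condition of $(\sigma,\delta)$-partial derivative type attached to it is the plain right (left) evaluation. Consequently the coordinate map of Theorem~\ref{theorem 3.1.7}(2) collapses to $F\mapsto(F({\bf a_1}),\dots,F({\bf a_k}))$ (resp. $F\mapsto(F_L({\bf a_1}),\dots,F_L({\bf a_k}))$), which is precisely $\psi$ (resp. $\psi_L$) of the corollary; the prescribed data $\{b_{j,o}:j=1,\dots,k\}$ in Theorem~\ref{theorem 3.1.7}(3) is nothing but an arbitrary tuple $(b_1,\dots,b_k)\in\mathbb{F}^k$ with $b_j:=b_{j,o}$, and the interpolation requirement there reduces to $F({\bf a_j})=b_j$ (resp. $F_L({\bf a_j})=b_j$) for all $j$; finally, in all three conditions the degree bound $<N$ becomes $<k$.

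Thus the three conditions of the corollary are exactly the three conditions of Theorem~\ref{theorem 3.1.7} written out with $m_1=\dots=m_k=o$, so their equivalence follows at once from that theorem. The left and right versions are treated identically, keeping (as throughout Section~\ref{Sect3}) the standing assumption that the map $\varphi$ of \eqref{6} is an additive group isomorphism whenever left divisions are used. The argument is essentially pure bookkeeping, and I do not foresee any genuine obstacle; the only point requiring a moment's care is the identification $\deg(o)=0$, and hence $N=k$, which is what makes the degree bounds in the corollary match those in the Hermite-type theorem.
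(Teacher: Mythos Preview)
Your proposal is correct and follows exactly the approach of the paper: the corollary is stated there as the specialization of Theorem~\ref{theorem 3.1.7} to the case $\vec{m}=(o,\dots,o)$, with the identification of left (right) P-independence with left (right) DP-independence of type $(o,\dots,o)$ coming from Definition~\ref{pd-independent}(3). No separate proof is given in the paper, and your bookkeeping of $N=k$ and the collapse of the derivative conditions to plain evaluations is precisely what is needed.
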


Let us give here also a result which allows us to construct recursively left 
(right) DP-independent sets. 

\begin{proposition}\label{3.1.11}
Let $\Omega=\{{\bf{a_1}},\dots,{\bf{a_k}}\} \subseteq \mathbb{F}^n$ be a left (right) DP-independent set of type $\vec{m}:=(m_1,\dots,m_{k})$. Consider $m:=x_{i_t}\cdots x_{i_2}x_{i_1}\in\mathcal{M}$ and define the following sets
$$E_j:=\{{\bf x}\in\mathbb{F}^n\ |\  \exists \lambda\in\mathbb{F}^* : ({\bf x}-{\bf x}^\lambda)\lambda=\vec{e}_j\}\ ,\ E_j^L:=\{{\bf x}\in\mathbb{F}^n\ |\  \exists \mu\in\mathbb{F}^* : \mu({\bf x}-{}^{\mu}{\bf x})=\vec{e}_j\}\ ,$$
where $\vec{e}_j$ is the $j$-th canonical vector of $\mathbb{F}^n$.
If 
$${\bf a_{k+1}}\notin Z(I^{\vec{m}}(\Omega))\cup \left(\bigcup_{j=1}^{t} E_{i_j}\right)\qquad \left(\ {\bf a_{k+1}}\notin Z_L(I_L^{\vec{m}}(\Omega))\cup \left(\bigcup_{j=1}^{t} E_{i_j}^L\right)\ \right)\ ,$$ 
then $\Omega\cup\{ {\bf a_{k+1}}\}$ is a left (right) DP-independent set of type $(\vec{m},m)$.
\end{proposition}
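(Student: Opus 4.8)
The plan is to reduce, via Propositions~\ref{prop 3.1.7} and~\ref{prop1}, to a single statement about the new point, and then to verify that statement by an explicit construction. First note that the hypothesis on ${\bf a_{k+1}}$ propagates to subsets: if $W\subseteq\Omega$ and $\vec m_W$ is the sub-tuple of $\vec m$ indexed by $W$, then adding more interpolation conditions shrinks the ideal, so $I^{\vec m}(\Omega)\subseteq I^{\vec m_W}(W)$ and hence $Z(I^{\vec m_W}(W))\subseteq Z(I^{\vec m}(\Omega))$; thus ${\bf a_{k+1}}\notin Z(I^{\vec m_W}(W))\cup\bigcup_{j=1}^{t}E_{i_j}$ for every such $W$. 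Since, by Proposition~\ref{prop1}, every $W\subseteq\Omega$ is itself left (right) DP-independent, Proposition~\ref{prop 3.1.7} reduces the claim to proving:
\[
(\star)\qquad\text{${\bf a_{k+1}}$ is left (right) DP-independent of type $(\vec m_W,m)$ from $W$, for every $W\subseteq\Omega$.}
\]
By Definition~\ref{pd-independent} (1), for a fixed $W$ this means producing the strictly decreasing chain (writing $\textbf{b}:={\bf a_{k+1}}$, $b_i:=(\textbf{b})_i$, $x_{i_0}:=o$)
\[
I^{\vec m_W}(W)\supsetneq I^{\vec m_W,o}(W\cup\{\textbf{b}\})\supsetneq I^{\vec m_W,x_{i_1}}(W\cup\{\textbf{b}\})\supsetneq\cdots\supsetneq I^{\vec m_W,m}(W\cup\{\textbf{b}\}).
\]

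For the first strict inclusion, pick $F_0\in I^{\vec m_W}(W)$ with $F_0(\textbf{b})=1$ (possible since $\textbf{b}\notin Z(I^{\vec m_W}(W))$, after rescaling). For the remaining ones I would proceed recursively: given $F_{j-1}\in I^{\vec m_W}(W)$ with $\Delta_{\textbf{b}}^{x_{i_r}\cdots x_{i_1}}F_{j-1}(\textbf{b})=0$ for $0\le r\le j-2$ and $\Delta_{\textbf{b}}^{x_{i_{j-1}}\cdots x_{i_1}}F_{j-1}(\textbf{b})=1$, set $F_j:=(x_{i_j}-b_{i_j})\,F_{j-1}$. Then $F_j\in I^{\vec m_W}(W)$ because this is a left ideal; applying Lemma~\ref{Der1.7} one checks that the ``correction'' terms all carry a proper suffix-derivative of $F_{j-1}$ evaluated at $\textbf{b}$, hence vanish by the inductive hypothesis, and a short computation with Theorem~\ref{Productrule} (using $\textbf{b}^{1}=\textbf{b}$ and that $\Delta^{x_r}_{\textbf{b}}(x_{i_j}-b_{i_j})$ equals $1$ if $r=i_j$ and $0$ otherwise) gives $\Delta_{\textbf{b}}^{x_{i_r}\cdots x_{i_1}}F_j(\textbf{b})=0$ for $0\le r\le j-1$, so $F_j\in I^{\vec m_W,x_{i_{j-1}}\cdots x_{i_1}}(W\cup\{\textbf{b}\})$, together with
\[
\Delta_{\textbf{b}}^{x_{i_j}x_{i_{j-1}}\cdots x_{i_1}}F_j(\textbf{b})=1-\bigl(b_{i_j}-(\textbf{b}^{\rho})_{i_j}\bigr)\rho,\qquad \rho:=\Delta_{\textbf{b}}^{x_{i_j}x_{i_{j-1}}\cdots x_{i_1}}F_{j-1}(\textbf{b}),
\]
the subtracted term being read as $0$ when $\rho=0$. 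Whenever this value is nonzero, $F_j$ (suitably rescaled) witnesses the $j$-th strict inclusion and the recursion continues.

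The only obstruction is the case $\rho\neq0$ and $\bigl(b_{i_j}-(\textbf{b}^{\rho})_{i_j}\bigr)\rho=1$, and this is exactly where $\textbf{b}\notin E_{i_j}$ is used: since $(\textbf{b}-\textbf{b}^{\rho})\rho\neq\vec e_{i_j}$, some coordinate other than the $i_j$-th is nonzero, i.e. there is $\ell\neq i_j$ with $\bigl(b_\ell-(\textbf{b}^{\rho})_\ell\bigr)\rho\neq0$; replacing the multiplier by $x_\ell-b_\ell$ and repeating the same computation (now $\Delta^{x_{i_j}}_{\textbf{b}}(x_\ell-b_\ell)=0$) yields $\widetilde F_j:=(x_\ell-b_\ell)F_{j-1}\in I^{\vec m_W,x_{i_{j-1}}\cdots x_{i_1}}(W\cup\{\textbf{b}\})$ with $\Delta_{\textbf{b}}^{x_{i_j}\cdots x_{i_1}}\widetilde F_j(\textbf{b})=-\bigl(b_\ell-(\textbf{b}^{\rho})_\ell\bigr)\rho\neq0$, finishing the $j$-th step. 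Iterating $j=1,\dots,t$ establishes $(\star)$, whence $\Omega\cup\{{\bf a_{k+1}}\}$ is left (right) DP-independent of type $(\vec m,m)$; the right version is obtained verbatim after swapping right $(\sigma,\delta)$-evaluations, derivatives and conjugates for their left analogues and $E_{i_j}$ for $E^{L}_{i_j}$. The step I expect to be the main obstacle is precisely the evaluation of $\Delta_{\textbf{b}}^{x_{i_j}\cdots x_{i_1}}F_j$ at $\textbf{b}$: it requires an honest application of the non-commutative Leibniz rule of Lemma~\ref{Der1.7} to kill all mixed terms and then the conjugation bookkeeping of Theorem~\ref{Productrule} to isolate the term $\bigl(b_{i_j}-(\textbf{b}^{\rho})_{i_j}\bigr)\rho$ — and it is this computation that forces the exact definition of the exceptional sets $E_{i_j}$; the remaining care is in checking that $(\star)$ for all $W\subseteq\Omega$ indeed supplies, through Propositions~\ref{prop 3.1.7} and~\ref{prop1}, every consecutive DP-independence needed for $\Omega\cup\{{\bf a_{k+1}}\}$, including the orderings in which ${\bf a_{k+1}}$ does not appear last.
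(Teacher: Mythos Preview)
Your explicit construction of the witnesses $F_j$ (and the fallback $\widetilde F_j$ when the $i_j$-th coordinate happens to hit $1$) is correct and is exactly the paper's argument, only written out in more detail: the paper multiplies by $(x_s-({\bf a_{k+1}})_s)$ for a variable $s$, evaluates the resulting vector $({\bf a_{k+1}}^{\mu}-{\bf a_{k+1}})\mu+\vec e_{i_j}$, and uses ${\bf a_{k+1}}\notin E_{i_j}$ to pick a good $s$; you first try $s=i_j$ and only switch to another coordinate if that fails. Your handling of the higher-order step via Lemma~\ref{Der1.7} and Theorem~\ref{Productrule} is right, and your displayed formula for $\Delta_{\bf b}^{x_{i_j}\cdots x_{i_1}}F_j({\bf b})$ is exactly what the computation gives.

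The gap is in the reduction. Proposition~\ref{prop 3.1.7} is an equivalence between DP-independence and a statement quantified over \emph{all} orderings; it does not say that checking the chain for a single ordering (or even for every ordering where ${\bf a_{k+1}}$ appears last) yields DP-independence. Your $(\star)$, even for all $W\subseteq\Omega$, gives the step where ${\bf a_{k+1}}$ is appended to $W$, but it says nothing about appending a point ${\bf a_j}\in\Omega$ to a set already containing ${\bf a_{k+1}}$ (with the $m$-condition at ${\bf a_{k+1}}$ present). You acknowledge this case at the very end but do not treat it, and Propositions~\ref{prop 3.1.7} and~\ref{prop1} alone do not close it. The paper avoids this entirely: it proves only $(\star)$ for $W=\Omega$ and then invokes Theorem~\ref{theorem 3.1.7}. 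The point is that the \emph{proof} of $1)\Rightarrow 2)$ in Theorem~\ref{theorem 3.1.7} uses just one ordering to build the interpolation isomorphism $\psi$, so $\Omega$ DP-independent together with ${\bf a_{k+1}}$ DP-independent of type $(\vec m,m)$ from $\Omega$ already yields condition~$2)$ of Theorem~\ref{theorem 3.1.7} for $\Omega\cup\{{\bf a_{k+1}}\}$, and $2)\Rightarrow 1)$ finishes. Replace your appeal to Proposition~\ref{prop 3.1.7} by this appeal to Theorem~\ref{theorem 3.1.7}; then $(\star)$ for $W=\Omega$ alone suffices and the detour through all subsets becomes unnecessary.
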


\begin{proof} Since ${\bf a_{k+1}}\notin Z(I^{\vec{m}}(\Omega))\ \left(Z_L(I_L^{\vec{m}}(\Omega)) \right)$, we deduce that there exists $F\in I^{\vec{m}}(\Omega) \setminus I^{\vec{m},o}(\Omega\cup\{{\bf a_{k+1}}\})$ $\left(\ I^{\vec{m}}_L(\Omega) \setminus I^{\vec{m},o}_L(\Omega\cup\{{\bf a_{k+1}}\})\ \right)$, implying that
$$I^{\vec{m}}(\Omega) \supsetneq I^{\vec{m},o}(\Omega\cup\{{\bf a_{k+1}}\})\ \left(\ I^{\vec{m}}_L(\Omega) \supsetneq I^{\vec{m},o}_L(\Omega\cup\{{\bf a_{k+1}}\})\ \right)\ .$$ 
Without loss of generality, we can assume that $F({a_{k+1}})=1\ \left(F_L({a_{k+1}})=1 \right)$. Consider the skew polynomial $H_s:=(x_s-({\bf a_{k+1}})_s)F\ \left(H_s:=F(x_s-({\bf a_{k+1}})_s)\right)$. Note that $H_s\in I^{\vec{m},o}\left(\Omega\cup \{{\bf a_{k+1}} \}\right)\ \left(I_L^{\vec{m},o}\left(\Omega\cup \{{\bf a_{k+1}} \}\right) \right)$, for any $s=1,\dots,n$. Moreover, we have 
$$\Delta_{{a_{k+1}}}^{x_{i_1}}H_s=(x_s-({\bf a_{k+1}})_s)\Delta_{{a_{k+1}}}^{x_{i_1}}F+\Delta_{{a_{k+1}}}^{x_{i_1}}\big( \left(x_s-({\bf a_{k+1}})_s\right)\big)$$ 
$$\left(\ \Delta_{{a_{k+1}},L}^{x_{i_1}}H_s=\left(\Delta_{{a_{k+1}},L}^{x_{i_1}}F\right)(x_s-({\bf a_{k+1}})_s)+\Delta_{{a_{k+1}},L}^{x_{i_1}}\big( \left(x_s-({\bf a_{k+1}})_s\right)\big)\ \right)\ .$$
Writing $\mu:=\Delta_{{a_{k+1}}}^{x_{i_1}}F({\bf a_{k+1}})\ \left(\mu_L:=\left(\Delta_{{a_{k+1}},L}^{x_{i_1}}F\right)_L({\bf a_{k+1}}) \right)$, we see that 
$$\Delta_{{a_{k+1}}}^{x_{i_1}}H_s({\bf a_{k+1}})=(({\bf a_{k+1}}^{\mu})_s-({\bf a_{k+1}})_s)\mu+\Delta_{{a_{k+1}}}^{x_{i_1}}\big( \left(x_s-({\bf a_{k+1}})_s\right)\big)\neq 0$$ 
$$\left(\ \left(\Delta_{{a_{k+1}}}^{x_{i_1}}H_s\right)_L({\bf a_{k+1}})=\mu_L(({}^{\mu_L}{\bf a_{k+1}})_s-({\bf a_{k+1}})_s)+\Delta_{{a_{k+1}},L}^{x_{i_1}}\big( \left(x_s-({\bf a_{k+1}})_s\right)\big)\neq 0\ \right)\ ,$$ 
for some index $s\in\{1,\dots,n\}$, 
because 
$$({\bf a_{k+1}}^{\xi}-{\bf a_{k+1}})\xi+\vec{e}_{i_1}\neq \vec{0}\ \ \ \ \left(\ \xi({}^{\xi}{\bf a_{k+1}}-{\bf a_{k+1}})+\vec{e}_{i_1}\neq \vec{0}\ \right)$$ for every $\xi\in\mathbb{F}^*$. Thus, normalizing $H_s$, we see that there exists 
$$H\in I^{\vec{m},o}\left(\Omega\cup \{{\bf a_{k+1}} \}\right)\quad \left(I_L^{\vec{m},o}\left(\Omega\cup \{{\bf a_{k+1}} \}\right) \right)$$
such that $\Delta_{{a_{k+1}}}^{x_{i_1}}H({\bf a_{k+1}})=1\ \ \left(\ \left(\Delta_{{a_{k+1}}}^{x_{i_1}}H_s\right)_L({\bf a_{k+1}})=1\ \right)$, i.e.
$$I^{\vec{m},o}(\Omega\cup\{{\bf a_{k+1}}\}) \supsetneq I^{\vec{m},x_{i_1}}(\Omega\cup\{{\bf a_{k+1}}\})\ \left(\ I_L^{\vec{m},o}(\Omega\cup\{{\bf a_{k+1}}\}) \supsetneq I_L^{\vec{m},x_{i_1}}(\Omega\cup\{{\bf a_{k+1}}\})\ \right)\ .$$
By an inductive argument, we can obtain that 
$$I^{\vec{m}}(\Omega)\supsetneq I^{\vec{m},o}(\Omega\cup\{{\bf a_{k+1}}\}) \supsetneq I^{\vec{m},x_{i_1}}(\Omega\cup\{{\bf a_{k+1}}\})\supsetneq \dots \supsetneq I^{\vec{m},m}(\Omega\cup\{{\bf a_{k+1}}\})$$
$$\left(\ I_L^{\vec{m}}(\Omega)\supsetneq I_L^{\vec{m},o}(\Omega\cup\{{\bf a_{k+1}}\}) \supsetneq I_L^{\vec{m},x_{i_1}}(\Omega\cup\{{\bf a_{k+1}}\})\supsetneq \dots \supsetneq I_L^{\vec{m},m}(\Omega\cup\{{\bf a_{k+1}}\})\ \right)\ .$$
This shows that ${\bf a_{k+1}}$ is left (right) DP-independent of type $(\vec{m},m)$ from $\Omega$. Since $\Omega$ is left (right) DP-independent of type $\vec{m}$, we get case 2) of Theorem \ref{theorem 3.1.7}, which is equivalent to say that $\Omega\cup\{ {\bf a_{k+1}}\}$ is a left (right) DP-independent set of type $(\vec{m},m)$. 
\end{proof}

\begin{remark}
When $\Omega\subseteq\mathbb{F}^n$ is left (right) DP-independent of type $(o, \dots ,o)$, i.e. $\Omega$ is left (right) P-independent, and ${\bf{a}}\in \mathbb{F}^{n}\setminus \Omega$ is such that  $I(\Omega) \supsetneq I (\Omega\cup\{ {\bf a}\})$ $\left(\ I_L(\Omega) \supsetneq I_L (\Omega\cup\{ {\bf a}\})\ \right)$, which is equivalent to require that ${\bf a}\notin Z(I(\Omega))\ \left({\bf a}\notin Z(I_L(\Omega))\right)$,
Proposition \ref{3.1.11} shows that $\Omega\cup \{{\bf a}\}$ is left (right) P-independent, giving \cite[Lemma 36]{evaluationandinterpolation} when $\Omega$ is a (finite) P-independent set.
\end{remark}

\smallskip

Finally, let us show here that the ring $\mathcal{R}$ is not noetherian for $n\geq 2$, unlike for $n=1$.

\smallskip

\begin{example}
Consider $\mathcal{R}:=\mathbb{F}[x_1,x_2;\sigma,\delta]$ and the left (right) ideals $$I_k:=\mathcal{R}x_1x_2+\mathcal{R}x_1x_2^2+ \dots +\mathcal{R}x_1x_2^{k}\;\;(I_{k,R}:=x_1x_2\mathcal{R}+x_1x_2^2\mathcal{R}+ \dots +x_1x_2^{k}\mathcal{R})$$ with $k\in\mathbb{Z}_{\geq 1}$. We note that 
$$I_1\subsetneq I_2 \subsetneq \dots \subsetneq I_k \subsetneq \mathcal{R}\ \ \ (I_{1,R}\subsetneq I_{2,R} \subsetneq \dots \subsetneq I_{k,R}\subsetneq \mathcal{R})$$ 
because the variables $x_1,x_2$ do not commute and $x_1\notin I_k$ ($I_{k,R}$) for every $k\in\mathbb{Z}_{\geq 1}$. Then $\mathcal{R}$ does not satisfy the ascending chain condition on left (right) ideals and therefore it is not a noetherian ring. In particular, when $\sigma=Id$ and $\delta=0$, the free conventional polynomial ring $\mathbb{F}[\textbf{x}; Id, 0]$ is not noetherian.
\end{example}

\subsection{The univariate case $(n=1)$}
In what follows, we will specialize the main results of the previous subsection
to the case $n=1$. For simplicity and to avoid any confusion of notation, we will use here the symbol $\mathcal{A}$ instead of $\mathcal{R}$ for denoting the noetherian ring of univariate skew polynomials, and we define 
$$\Delta_{a}^{0}F(x):=\Delta_{a}^{o}F(x)=F(x) \qquad \left(\ \Delta_{a,L}^{0}F(x):=\Delta_{a,L}^{o}F(x)=F(x)\ \right)$$
$$\Delta_{a}^{t}F(x):=\Delta_{a}^{x^t}F(x) \qquad \left(\ \Delta_{a,L}^{t}F(x):=\Delta_{a,L}^{x^t}F(x)\ \right) $$
and
$$\Delta_{a}^{0}F(a):=\Delta_{a}^{o}F(a)=F(a) \quad \left(\ \left(\Delta_{a,L}^{0}F\right)_L(a):=\left(\Delta_{a,L}^{o}F\right)_L(a)=F_L(a)\ \right)$$
$$\Delta_{a}^{t}F(a):=\Delta_{a}^{x^t}F(a) \quad \left(\ \left(\Delta_{a,L}^{t}F\right)_L(a):=\left(\Delta_{a,L}^{x^t}F\right)_L(a)\ \right) $$
for any $m=x^t\in\mathcal{M}$ with $t\in\mathbb{Z}_{>0}$ and every $ a\in\mathbb{F}$. Moreover, if $S\subseteq \mathbb{F}$, then let $f_S^{\vec{m}}\ (f_{S,L}^{\vec{m}})$ be the generator of $I^{\vec{m}}(S)\ \left(I^{\vec{m}}_L(S) \right)$, i.e. $I^{\vec{m}}(S)=\mathcal{A}f_S^{\vec{m}}\ \left(I^{\vec{m}}_L(S)=f_{S,L}^{\vec{m}}\mathcal{A}\right)$. In particular, when $\vec{m}=\vec{0}$, we will simply write $f_S:=f_S^{\vec{0}}\ \left(f_{S,L}:=f_{S,L}^{\vec{0}} \right)$.

\begin{theorem}[\textbf{Skew univariate Hermite-type interpolation}]\label{HERMITE N=1} 
Let $\sigma$ be an endomorphism (automorphism) of $\mathbb{F}$, $\Omega=\{{{a_1}},\dots,{{a_k}}\} \subseteq \mathbb{F}$ a finite set,  $\vec{m}=(m_1,\dots,m_k)\in \mathbb{Z}^{k}_{\geq 0}$ and define $N:=\sum_{j=1}^k(m_j+1)$. The following conditions are equivalent:
\begin{itemize}
    \item[$1)$] $\deg(f_{\Omega}^{\vec{m}})=N$ $(\deg(f_{\Omega,L}^{\vec{m}})=N)$ ;
    \item[$2)$] the map $\psi: \mathcal{A}_{N} \to \mathbb{F}^{N}$ $\left(\psi_L: \mathcal{A}_{N} \to \mathbb{F}^{N} \right)$ defined by $$f\mapsto (f({{a_1}}),\dots,\Delta_{{{a_1}}}^{m_1}f({{a_1}}),\dots,
    f({{a_k}}),\dots,\Delta_{{{a_k}}}^{m_k}f({{a_k}}))$$
    $$\left(f\mapsto (f_L({{a_1}}),\dots,(\Delta_{{{a_1},L}}^{m_1}f)_L({{a_1}}),\dots,
     f_L({{a_k}}),\dots,(\Delta_{{{a_k},L}}^{m_k}f)_L({{a_k}}))\right)$$
is a left (right) $\mathbb{F}$-module isomorphism, where  $\mathcal{A}_{N}:=\{f\in \mathcal{A}:\deg f<N\}$ ;
 \item[$3)$]  given any finite set of $N$ values in $\mathbb{F}$
$$\{b_{j,0},b_{j,1},b_{j,2},\dots,b_{j,m_j}:j=1,2,\dots,k\}\ ,$$ there exists a unique skew polynomial $f\in \mathcal{A}$ with $\deg(f)< N$ such that 
$$\qquad f({{a_j}})=b_{j,0}, \Delta_{{{a_j}}}^{1}f({{a_j}})=b_{j,1},\Delta_{{{a_j}}}^{2}f({{a_j}})=b_{j,2}, \dots , \Delta_{{{a_j}}}^{m_j}f({{a_j}})=b_{j,m_j}$$
$$\left(f_L({{a_j}})=b_{j,0}, (\Delta_{{{a_j},L}}^{1}f)_L({{a_j}})=b_{j,1},(\Delta_{{{a_j},L}}^{2}f)_L({{a_j}})=b_{j,2}, \dots , (\Delta_{{{a_j},L}}^{m_j}f)_L({{a_j}})=b_{j,m_j}\right)$$
for all $j=1,\dots,k$ ;
\item[$4)$] $\Omega$ is left (right) DP-independent  of type $(m_1, \dots ,m_k)$ .
\end{itemize}

\medskip

\noindent In particular, $\Omega$ is left (right) DP-independent of type ${\vec{m}}:=(m_1, \dots ,m_k)$ if and only if 
$$\deg(f_{\Omega}^{\vec{m}})=N \ \ \left(\deg(f_{\Omega,L}^{\vec{m}})=N\right)\ .$$
\end{theorem}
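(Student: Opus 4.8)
The plan is to reduce the statement to the multivariate Theorem \ref{theorem 3.1.7} together with one elementary dimension count inside the univariate ring $\mathcal{A}=\mathbb{F}[x;\sigma,\delta]$. The only genuinely new point compared with the multivariate case is the bridge to condition $1)$, namely $\deg(f_\Omega^{\vec{m}})=N$; every other equivalence is either a tautology or an instance of Theorem \ref{theorem 3.1.7} with $n=1$. I will write only the left version; on the right one replaces left Euclidean division by right Euclidean division, and this is exactly where the hypothesis ``$\sigma$ an automorphism'' enters — surjectivity of $\sigma$ is what produces the leading term of a right quotient — whereas an endomorphism of the division ring $\mathbb{F}$, automatically injective, suffices on the left.

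First I would establish $1)\iff 2)$. By Lemma \ref{Der1.5} the map $\psi$ is a left $\mathbb{F}$-module homomorphism between $\mathcal{A}_N$ and $\mathbb{F}^N$, both of left dimension $N$, so $\psi$ is an isomorphism precisely when it is injective. Unwinding Definition \ref{def-ideals} for $n=1$ shows that a polynomial of degree $<N$ lies in $\ker\psi$ iff it lies in $I^{\vec{m}}(\Omega)$, so $\ker\psi=I^{\vec{m}}(\Omega)\cap\mathcal{A}_N$. Since $\mathcal{A}$ is a left principal ideal domain we have $I^{\vec{m}}(\Omega)=\mathcal{A}f_\Omega^{\vec{m}}$, whence $\ker\psi=\{0\}$ iff $I^{\vec{m}}(\Omega)$ contains no nonzero polynomial of degree $<N$, i.e.\ iff $\deg(f_\Omega^{\vec{m}})\geq N$. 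On the other hand, Lemma \ref{3.1.8} applied with $n=1$ to the monomial $m=x^N$ produces an element of $I^{\vec{m}}(\Omega)$ of degree exactly $N$, so $\deg(f_\Omega^{\vec{m}})\leq N$ holds unconditionally. Combining the two bounds, $\psi$ being an isomorphism is equivalent to $\deg(f_\Omega^{\vec{m}})=N$, which is $1)\iff 2)$.

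Next, $2)\iff 3)$ merely translates bijectivity of $\psi$ into the conjunction of existence and uniqueness of the interpolant of degree $<N$, so there is nothing to prove there. Then I would deduce $2)\iff 4)$ from Theorem \ref{theorem 3.1.7} specialized to $n=1$, where $(m_1,\dots,m_k)\in\mathbb{Z}^k_{\geq 0}$ is read as $(x^{m_1},\dots,x^{m_k})\in\mathcal{M}^k$. Condition $2)$ of that theorem supplies $F_1,\dots,F_N$ of degree $<N$, hence lying in $\mathcal{A}_N$, spanning a left submodule on which $\psi$ restricts to an isomorphism onto $\mathbb{F}^N$; comparing dimensions forces $\langle F_1,\dots,F_N\rangle=\mathcal{A}_N$, so our $2)$ holds, and conversely our $2)$ gives such $F_i$ by taking $1,x,\dots,x^{N-1}$. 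Thus our $2)$ is equivalent to condition $2)$ of Theorem \ref{theorem 3.1.7}, which that theorem equates with $\Omega$ being left (right) DP-independent of type $(m_1,\dots,m_k)$, i.e.\ with our $4)$. Chaining $1)\iff 2)\iff 4)$ then yields the concluding ``in particular'' assertion.

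I do not expect a serious obstacle: the argument is essentially bookkeeping over earlier results. The point deserving most care — and the closest thing to a subtlety — is keeping left and right consistent: the PID structure of $\mathcal{A}$, the generator $f_\Omega^{\vec{m}}$ (resp.\ $f_{\Omega,L}^{\vec{m}}$), and the degree-$N$ element furnished by Lemma \ref{3.1.8} must all be taken on the same side, and in the right case the automorphism hypothesis on $\sigma$ must genuinely be invoked, precisely through right Euclidean division.
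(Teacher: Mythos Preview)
Your proposal is correct and follows essentially the same route as the paper: reduce $2)\Leftrightarrow 3)\Leftrightarrow 4)$ to Theorem \ref{theorem 3.1.7} with $n=1$, and handle the new equivalence $1)\Leftrightarrow 2)$ via a dimension count using the principal ideal structure of $\mathcal{A}$. The only organizational difference is that you obtain the bound $\deg(f_\Omega^{\vec m})\le N$ by explicitly invoking Lemma \ref{3.1.8}, whereas the paper extracts it from the quotient $\mathcal{A}/\mathcal{A}f_\Omega^{\vec m}\cong\mathrm{Im}(\psi)$ once surjectivity of $\psi$ (extended to $\mathcal{A}$) is known; both arguments are equivalent and equally short.
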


\smallskip

\begin{proof}
We will only show the equivalence between the statements for the left case, because for the right case the proofs are quite similar.

\smallskip

\noindent $1) \Rightarrow 2)$  From Lemma \ref{Der1.5} for $n=1$, it is evident that $\psi$ is a left $\mathbb{F}$-module homomorphism. We show that $\psi$ is injective. Let $f \in \mathcal{A}_N$ with $f\neq 0$ and assume that $\psi(f) = 0$. Then $f\in I^{\vec{m}}(\Omega)=\mathcal{A}f_{\Omega}^{\vec{m}}$ and therefore, $f=q\cdot f_{\Omega}^{\vec{m}}$ for some non-zero skew polynomial $q\in \mathcal{A}$. Thus, $\deg f\geq \deg f_{\Omega}^{\vec{m}}=N$,  which is absurd because $f \in \mathcal{A}_{N}$. Therefore $f = 0$ and $\psi $ is injective. On the other hand, since $\mathcal{A}/\ker \psi= \mathcal{A}/\mathcal{A}f_{\Omega}^{\vec{m}}\cong Im(\psi)$, it follows that $\dim Im(\psi)=\dim \mathcal{A}/\mathcal{A}f_{\Omega}^{\vec{m}}=\deg f_{\Omega}^{\vec{m}}=N=\dim \mathbb{F}^{N} $ and therefore $\psi$ is a surjective left $\mathbb{F}$-module homomorphism.

\smallskip

\noindent $2)\Rightarrow 1)$ Since $\psi $ is a surjective left $\mathbb{F}$-module homomorphism and $\mathcal{A}/\mathcal{A}f_{\Omega}^{\vec{m}}=\mathcal{A}/\ker \psi\cong Im(\psi)$, it follows that  $\deg (f_{\Omega}^{\vec{m}})=\dim Im(\psi)=\dim \mathbb{F}^{N}=N$.

\smallskip

\noindent $2)\Leftrightarrow 3)$ Immediate.

\smallskip

\noindent Finally, the equivalence between $3)$ and $4)$ follows from Theorem \ref{theorem 3.1.7} for $n=1$ and the fact that if there exists another polynomial $g\in \mathcal{A}$ with $\deg g < N$ such that
$$\qquad g({{a_j}})=b_{j,0}, \Delta_{{{a_j}}}^{1}g({{a_j}})=b_{j,1},\Delta_{{{a_j}}}^{2}g({{a_j}})=b_{j,2}, \dots , \Delta_{{{a_j}}}^{m_j}g({{a_j}})=b_{j,m_j}$$
for all $j=1,\dots,k$, then
$$(f-g) \in \ker \psi= I^{\vec{m}}(\Omega) =\mathcal{A}f_{\Omega}^{\vec{m}}.$$ Note that  $\mathcal{A}/\ker \psi \cong Im(\psi)=\mathbb{F}^N$, because $\psi$ is surjective by Theorem \ref{theorem 3.1.7}. Hence, $\deg (f_{\Omega}^{\vec{m}})=N$ and therefore $f=g+qf_{\Omega}^{\vec{m}}$ for some $q\in \mathcal{A}$. Since $\deg (f)<  N$ and $\deg (g)<N$, this implies $q=0$, i.e. $f=g$.
\medskip
\end{proof}

\begin{corollary}[\textbf{Skew univariate Lagrange interpolation}]\label{LAGRANGE N=1}
 Let $\sigma$ be an endomorphism (automorphism) of $\mathbb{F}$ and let $\Omega=\{{{a_1}},a_2,\dots,{{a_k}}\} \subseteq \mathbb{F}$ be a finite set. The following conditions are equivalent:
 \begin{itemize}
 \item[$1)$] $\deg(f_{\Omega})=k$ $(\deg(f_{\Omega,L})=k)$ ;
     \item[ $2)$] the map $\phi:\mathcal{A}_k \to \mathbb{F}^{k}$ $(\phi_L:\mathcal{A}_k \to \mathbb{F}^{k})$, defined by $$f \mapsto (f({{a_1}}),f({{a_2}}), \dots ,f({{a_k}}))$$
     $$\left(f \mapsto (f_L({{a_1}}),f_L({{a_2}}), \dots ,f_L({{a_k}}))\right)$$
     is a left (right) $\mathbb{F}$-module isomorphism ;
     \item[$3)$] for every $b_1, \dots , b_k\in \mathbb{F}$, there exists a unique skew polynomial $f\in \mathcal{A}$ with $\deg(f)<k$ such that $f({{a_j}})=b_{j}$ $(f_L({{a_j}})=b_{j})$ for all $j=1, \dots ,k$ ;
     \item[$4)$] $\Omega$ is left (right) P-independent .
     
     \medskip
     
 \end{itemize}
 
 \noindent In particular, $\Omega$ is left (right) P-independent if and only if $\deg(f_{\Omega})=k$ $(\deg(f_{\Omega,L})=k)$.  
\end{corollary}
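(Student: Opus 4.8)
The plan is to obtain Corollary \ref{LAGRANGE N=1} as the special case $\vec{m}=(0,\dots,0)$ of Theorem \ref{HERMITE N=1}, so that essentially no new argument is needed beyond a careful matching of notation. First I would record the elementary consequences of setting $m_j=0$ for every $j=1,\dots,k$: then $N:=\sum_{j=1}^{k}(m_j+1)=k$, so $\mathcal{A}_N=\mathcal{A}_k$; by the conventions fixed at the beginning of this subsection, $\Delta_a^{0}f(a)=\Delta_a^{o}f(a)=f(a)$ (and likewise $\left(\Delta_{a,L}^{0}f\right)_L(a)=f_L(a)$); and $f_\Omega^{\vec{m}}=f_\Omega^{\vec{0}}=f_\Omega$ (resp.\ $f_{\Omega,L}^{\vec{m}}=f_{\Omega,L}$).

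Next I would check, item by item, that the four conditions of the corollary are exactly the $\vec{m}=(0,\dots,0)$ instances of the four conditions of Theorem \ref{HERMITE N=1}. Condition $1)$, $\deg(f_\Omega)=k$, is $\deg(f_\Omega^{\vec{m}})=N$. In condition $2)$ every partial-derivative slot of the map $\psi$ from the theorem collapses to a plain evaluation, so $\psi$ becomes the map $\phi$ of the corollary; hence $\psi$ is a left (right) $\mathbb{F}$-module isomorphism if and only if $\phi$ is. In condition $3)$ the prescribed Hermite data $\{b_{j,0},b_{j,1},\dots,b_{j,m_j}:j=1,\dots,k\}$ reduce to the $k$ values $\{b_1,\dots,b_k\}$ with $b_j:=b_{j,0}$, and the conclusion reduces to the existence of a unique $f$ with $\deg(f)<k$ and $f(a_j)=b_j$. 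Condition $4)$, that $\Omega$ be left (right) DP-independent of type $(m_1,\dots,m_k)=(0,\dots,0)$, is by Definition \ref{pd-independent}(3) the very definition of $\Omega$ being left (right) P-independent. Since Theorem \ref{HERMITE N=1} asserts the pairwise equivalence of its four conditions, the four conditions of Corollary \ref{LAGRANGE N=1} are pairwise equivalent, and the concluding ``in particular'' assertion is just the equivalence $1)\Leftrightarrow 4)$.

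The only place one must be slightly careful is the uniqueness clause in item $3)$: it is already part of Theorem \ref{HERMITE N=1}(3) (established there from $\ker\psi=I^{\vec{m}}(\Omega)=\mathcal{A}f_\Omega^{\vec{m}}$ together with $\deg(f_\Omega^{\vec{m}})=N$ and a degree count), so nothing further is needed. I do not expect any genuine obstacle; alternatively, one could note that for $\vec{m}=\vec{0}$ this recovers the skew Lagrange interpolation of \cite[Theorem 4]{evaluationandinterpolation} and \cite[Corollary 41]{zerosmartinez} in the univariate setting, which provides an independent check.
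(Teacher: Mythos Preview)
Your proposal is correct and matches the paper's approach: the paper states Corollary~\ref{LAGRANGE N=1} without proof, as an immediate specialization of Theorem~\ref{HERMITE N=1} to $\vec{m}=(0,\dots,0)$, which is exactly what you carry out. Your identification of each item with the corresponding item of the Hermite theorem (including the use of Definition~\ref{pd-independent}(3) for item~$4)$ and the uniqueness already contained in Theorem~\ref{HERMITE N=1}(3)) is accurate.
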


\smallskip

\begin{remark}
For the left case and $\delta=0$, the equivalences between 1) and 2) in Theorem \ref{HERMITE N=1} and Corollary \ref{LAGRANGE N=1} follows from \cite[Theorem 4.4]{Er1} and \cite[Theorem 4.2]{Er1}, respectively.
\end{remark}

\begin{example}
Let $\mathbb{C}[x;\sigma,0]$ with $\sigma(z)=\bar{z}$ (the complex conjugation) for all $z\in \mathbb{C}$ and let $\Omega=\{i,1+i\}\subseteq \mathbb{C}$. Since $[i]_L=[i]\neq [1+i]=[1+i]_L$, we see that $\Omega$ is left (right) DP-independent of type $(m_1,m_2)$ for any $m_1,m_2\in \mathbb{Z}_{\geq 0}$. We will construct skew polynomials $f(x), g(x)\in \mathbb{C}[x;\sigma,0]$ of degree $< 5$ such that $$f(i)=f(1+i)=0,\;\Delta_{i}^{1}f(i)=\Delta_{1+i}^{1}f(1+i)=1\;\text{and}\;\Delta_{i}^{2}f(i)=i\ ,$$
$$g_L(i)=g_L(1+i)=0,\;\left(\Delta_{i,L}^{1}\ g\right)_L(i)=\left(\Delta_{1+i,L}^{1}\ g\right)_L(1+i)=1\;\text{and}\;\left(\Delta_{i,L}^{2}\ g\right)_L(i)=i\ .$$
By Theorem \ref{HERMITE N=1}, the maps $\psi, \psi_L: \mathbb{C}[x;\sigma,0]_{5} \to \mathbb{F}^{5}$ defined by 
$$f\mapsto \psi(f):=\left(f(i),\Delta_{i}^{1}f(i),\Delta_{i}^{2}f(i),f(1+i),\Delta_{1+i}^{1}f(1+i)\right)\ ,$$
$$g\mapsto \psi_L(g):=\left(g_L(i),\left(\Delta_{i,L}^{1}g\right)_L(i),\left(\Delta_{i,L}^{2}g\right)_L(i),g_L(1+i),\left(\Delta_{1+i,L}^{1}g\right)_L(1+i)\right)\ ,$$
are left and right $\mathbb{F}$-module isomorphisms respectively, where 
$$\mathbb{C}[x;\sigma,0]_{5}:=\{h\in \mathbb{C}[x;\sigma,0] \ | \ \deg h<5\}\ .$$ 
Note that $$
\begin{array}{ccccc}
  \psi(F_{1,0}(x)) & = & (1,0,0,1,0) & = & \psi_L(G_{1,0}(x)) \\ 
  \psi(F_{1,1}(x)) & = & (0,1,0,1,1) & = & \psi_L(G_{1,1}(x)) \\
  \psi(F_{1,2}(x)) & = & (0,0,1,1,2) & = & \psi_L(G_{1,2}(x)) \\ 
 \psi(F_{2,0}(x)) & = & (0,0,0,1,3) & = & \psi_L(G_{2,0}(x)) \\
 \psi(F_{2,1}(x)) & = & (0,0,0,0,1) & = & \psi_L(G_{2,1}(x)) 
\end{array}\;$$
where 
$$F_{1,0}(x)=G_{1,0}(x):=1\ ,\ F_{1,1}(x)=G_{1,1}(x):=(x-i)\ ,\ F_{1,2}(x)=G_{1,2}(x):=(x-i)^2\ ,$$
$$F_{2,0}(x)=G_{2,0}(x):=(x-i)^3\ ,\ F_{2,1}(x):=(x-1-i)(x-i)^3\ ,$$
$$G_{2,1}(x):=(x-i)^3(x-1-i)\ .$$
Then, making left (right) linear operations on the previous skew polynomials, we have 
$$
\begin{array}{ccccc}
  \psi(F'_{1,0}(x)) & = & \psi_L(G'_{1,0}(x)) & = & (1,0,0,0,0) \\ 
  \psi(F'_{1,0}(x)) & = & \psi_L(G'_{1,1}(x)) & = & (0,1,0,0,0) \\ 
  \psi(F'_{1,0}(x)) & = & \psi_L(G'_{1,2}(x)) & = & (0,0,1,0,0) \\ 
  \psi(F'_{1,0}(x)) & = & \psi_L(G'_{2,0}(x)) & = & (0,0,0,1,0) \\
  \psi(F'_{1,0}(x)) & = & \psi_L(G'_{2,1}(x)) & = & (0,0,0,0,1)
\end{array}
$$
where 
$$F'_{1,0}(x):=F_{1,0}(x)-F_{2,0}(x)+3\cdot F_{2,1}(x)\ ,\ G'_{1,0}(x):=G_{1,0}(x)-G_{2,0}(x)+G_{2,1}(x)\cdot 3\ , $$
$$F'_{1,1}(x):=F_{1,1}(x)-F_{2,0}(x)+2\cdot F_{2,1}(x)\ ,\ G'_{1,1}(x):=G_{1,1}(x)-G_{2,0}(x)+G_{2,1}(x)\cdot 2\ ,$$
$$F'_{1,2}(x):=F_{1,2}(x)-F_{2,0}(x)+F_{2,1}(x)\ ,\ G'_{1,2}(x):=G_{1,2}(x)-G_{2,0}(x)+G_{2,1}(x)\ ,$$
$$F'_{2,0}(x):=F_{2,0}(x)-3\cdot F_{2,1}(x)\ ,\ G'_{2,0}(x):=G_{2,0}(x)-G_{2,1}(x)\cdot 3\ ,$$ 
$$F'_{2,1}(x):=F_{2,1}(x)\ ,\ G'_{2,1}(x):=G_{2,1}(x)\ .$$
Hence, we obtain that
$$
\begin{array}{rl}
   f(x) &= 0\cdot F'_{1,0}(x)+1\cdot F'_{1,1}(x)+i\cdot F'_{1,2}(x)+0\cdot F'_{2,0}(x)+1\cdot F'_{2,1}(x) \\
   &= (3+i)x^4-(4+2i)x^3-(8-3i)x^2+(5+2i)x+5-5i\ , \\[10pt]
   g(x) &= G'_{1,0}(x)\cdot 0+G'_{1,1}(x)\cdot 1+G'_{1,2}(x)\cdot i+G'_{2,0}(x)\cdot 0+G'_{2,1}(x)\cdot 1 \\
   &= x^4(3+i)+x^3(-4-2i)+x^2(-8+3i)+x(5+2i)+5-5i \\
   &= (3+i)x^4-(4-2i)x^3-(8-3i)x^2+(5-2i)x+5-5i\ .
\end{array}
$$
Finally, let us observe that 
$$
E_j:=\{{x}\in\mathbb{C}\ |\  \exists \lambda\in\mathbb{C}^* : ({x}-{x}^\lambda)\lambda=1\}=\{{x}\in\mathbb{C}\ |\  \exists \lambda\in\mathbb{C}^* : {x}\lambda-\overline{\lambda}{x}=1\}=$$
$$=\{{x}\in\mathbb{C}\ |\  \exists \lambda\in\mathbb{C}^* : (\lambda-\overline{\lambda}){x}=1\}=\{ bi \ |\ b\in\mathbb{R}^*\}=E_j^L\ .$$\end{example}

\medskip

The following example shows that there exist left (right) DP-independent sets of some type, whose elements belong to the same right (left) $(\sigma,\delta)$-conjugacy class.

\begin{example}
Let $\mathbb{C}[x;\sigma,0]$ with $\sigma(z)=\bar{z}$ for all $z\in \mathbb{C}$ and let $\Omega=\{a_1,a_2\}\subseteq \mathbb{C}$, where $a_1=1$ and $a_2$ is a complex number different from $\{1,\pm i\}$ which belongs to the unit circle of radius $1$, that is, $[a_1]=[a_2]$ $([a_1]_L=[a_2]_L)$, because $\delta=0$, $\mathbb{C}$ is commutative and $\sigma^{-1}=\sigma$. Let us show that $\Omega$ is left (right) DP-independent of type $(0,1):=(o,x)$. We need to prove that 
$$I^{0}(\{a_1\})\supsetneq I^{0,0}(\{a_1\}\cup \{a_2\})\;\;\text{and}\;\;I^{1}(\{a_2\})\supsetneq I^{1,0}(\{a_2\}\cup \{a_1\})$$
$$\left(I^{0}_L(\{a_1\})\supsetneq I^{0,0}_L(\{a_1\}\cup \{a_2\})\;\;\text{and}\;\;I^{1}_L(\{a_2\})\supsetneq I^{1,0}_L(\{a_2\}\cup \{a_1\})\right).$$
Note that $I^{0}(\{a_1\})\supsetneq I^{0,0}(\{a_1\}\cup \{a_2\})\ \ \left(I^{0}_L(\{a_1\})\supsetneq I^{0,0}_L(\{a_1\}\cup \{a_2\})\right)$ holds. On the other hand, observe that  $f(x):=(x-a_2)^2\in I^{1}(\{a_2\})\ \left(f(x):=(x-a_2)^2\in I^{1}_L(\{a_2\})\right)$ is such that $f(a_1)\neq 0\ \left(f_L(a_1)\neq 0\right)$. Indeed, if $f(a_1)=f(1)=(1^{1-a_2}-a_2)(1-a_2)=0\ \left(f_L(a_1)=(1-a_2)({}^{1-a_2}1-a_2)=0\right)$, then for both cases (right and left) we have
\begin{center}
    either $a_2=1$, or  $\overline{(1-a_2)}(1-a_2)^{-1}-a_2=0$,
\end{center}
because $\delta=0$, $\mathbb{C}$ is commutative and $\sigma^{-1}=\sigma$. 
The former case is not possible because $a_2\neq 1$. In the latest case, by writing $a_2:=\alpha+\beta i$ with $\alpha,\beta \in \mathbb{R}$, it follows that $$(\alpha^2-\beta^2-2\alpha+1)+2\alpha\beta i=0.$$
Thus, $2\alpha \beta=0 \Leftrightarrow \alpha=0$, or $\beta=0$. If $\alpha=0$ then we have $\beta=\pm 1$. Hence $a_2=\pm i$, but this is not possible. Finally, if $\beta=0$ then $a_2=1$, but this is impossible. Thus, we get that $f(a_1)\neq 0$ ($f_L(a_1)\neq 0$).
\end{example}

Note that if a finite set $\Omega\subseteq\mathbb{F}^n$ is left (right) DP-independent of some type, then $\Omega$ is also left (right) P-independent. To conclude with the case $n=1$, let us show that if a finite set $\Omega\subseteq \mathbb{F}$ is left (right) P-independent, then it is not necessarily left (right) DP-independent of some type. 

\begin{example}
Let $\mathbb{C}[x;\sigma,0]$ with $\sigma(z)=\bar{z}$ (the complex conjugation) for all $z\in \mathbb{C}$ and let $\Omega=\{1,-i\}\subseteq \mathbb{C}$. Clearly $\Omega$ is left (right) P-independent. However, there does not exist $(m_1,m_2)\in \mathbb{Z}_{\geq 0}\times \mathbb{Z}_{>0}\setminus \{(0,0)\}$ such that $\Omega$ is left (right) DP-independent of type $(m_1,m_2)$. Indeed, since every skew polynomial $f\in I^{m_2}(\{-i\})$ ($f\in I^{m_2}_L(\{-i\})$) is of the form $f(x)=g(x)(x+i)^{m_2+1}$ ($f(x)=(x+i)^{m_2+1}g(x)$) for some $g(x)\in \mathbb{C}[x;\sigma,0]$ and $(x+i)^2=x^2-1$, it follows that any power $\geq 2$ of $(x+i)$ is zero at $1$ and therefore there does not exist a skew polynomial $f\in I^{m_2}(\{-i\})\setminus I^{m_2,0}(\{-i\}\cup \{1\})$ ($f\in I^{m_2}_L(\{-i\})\setminus I^{m_2,0}_L(\{-i\}\cup \{1\})$). 
\end{example}

\subsection{Skew Vandermonde matrices} Let us define here skew confluent Vandermonde matrices for the multivariate case $n\geq 1$ (as to $n=1$, the univariate cases, see e.g. \cite{zerosmartinez}). 

\begin{definition}\label{DefVandermonde}
Fix $d\in \mathbb{Z}_{>0}$. Take $m_i:=x_{i_{s(i)}}\cdots x_{i_1}\in \mathcal{M}$ and consider the set 
$\{m'_j\}_{j=1,\dots,r}$ of all the monomials in $\mathcal{M}$ of degree less than $d$, with a certain ordering in $\mathcal{M}$. We define 
the \textit{right (left) $(\sigma,\delta)$-confluent Vandermonde matrix $V_{m_i}^{\sigma,\delta}({\bf{a_i}}) \ \left(V_{m_i,L}^{\sigma,\delta}({\bf{a_i}})\right)$ of order $d$ on ${\bf{a_i}}\in \mathbb{F}^{n}$} as the
$r\times \left(\deg \left(m_i\right)+1\right)\ \left( \left(\deg \left(m_i\right)+1\right)\times r \right)$ matrix with entries in $\mathbb{F}$ given by
$$V_{m_i}^{\sigma,\delta}({\bf{a_i}}):=\left(
\begin{matrix}
\Delta^{o}_{\bf{a_i}}m'_{1}(\bf{{a}}_{i})& \Delta^{x_{i_1}}_{\bf{a_i}}m'_{1}({\bf{a}_{i}})& \cdots & \Delta^{m_i}_{\bf{a_i}}m'_{1}({\bf{a_i}})\\
\Delta^{o}_{\bf{a_i}}m'_{2}(\bf{{a}}_{i}) & \Delta^{x_{i_1}}_{\bf{a_i}}m'_{2}(\bf{a_i})  & \cdots & \Delta^{m_i}_{\bf{a_i}}m'_{2}({\bf{a_i}}) \\
\vdots & \vdots & \ddots & \vdots \\
\Delta^{o}_{\bf{a_i}}m'_{r}(\bf{a_i}) & \Delta^{x_{i_1}}_{\bf{a_i}}m'_{r}(\bf{a_i}) & \cdots & \Delta^{m_i}_{\bf{a_i}}m'_{r}(\bf{a_i})
\end{matrix}
\right)$$
$$\left(\ V_{m_i,L}^{\sigma,\delta}({\bf{a_i}}):=\left(
\begin{matrix}
\left(\Delta^{o}_{\bf{a_i},L}m'_{1}\right)_L(\bf{{a}}_{i})& \left(\Delta^{o}_{\bf{a_i},L}m'_{2}\right)_L(\bf{{a}}_{i})& \cdots & \left(\Delta^{o}_{\bf{a_i},L}m'_{r}\right)_L(\bf{{a}}_{i})\\
\left(\Delta^{x_{i_1}}_{\bf{a_i},L}m'_{1}\right)_L(\bf{{a}}_{i})& \left(\Delta^{x_{i_1}}_{\bf{a_i},L}m'_{2}\right)_L(\bf{{a}}_{i})& \cdots & \left(\Delta^{x_{i_1}}_{\bf{a_i},L}m'_{r}\right)_L(\bf{{a}}_{i})\\
\vdots & \vdots & \ddots & \vdots \\
\left(\Delta^{m_i}_{\bf{a_i},L}m'_{1}\right)_L(\bf{{a}}_{i})& \left(\Delta^{m_i}_{\bf{a_i},L}m'_{2}\right)_L(\bf{{a}}_{i})& \cdots & \left(\Delta^{m_i}_{\bf{a_i},L}m'_{r}\right)_L(\bf{{a}}_{i})\\
\end{matrix}
\right)\ \right)\ .$$
Moreover, we define the \textit{right (left) $(\sigma,\delta)$-confluent Vandermonde matrix 
$$V^{\sigma,\delta}_{(m_1, \dots ,m_k)} ({\bf{a_1}}, {\bf{a_2}}, \dots , {\bf{a_k}}) \ \ \ \left(V^{\sigma,\delta}_{(m_1, \dots ,m_k)} ({\bf{a_1}}, {\bf{a_2}}, \dots , {\bf{a_k}})\right)$$
of order $d$ on the set $\{{\bf{a_1}},{\bf{a}_2}, \dots ,{\bf{a_k}}\}$} as the {\small{ $r\times \sum_{i=1}^{k}\left(\deg \left(m_i\right)+1\right)\ \left( \sum_{i=1}^{k}\left(\deg \left(m_i\right)+1\right)\times r\right)$} }matrix with entries in $\mathbb{F}$ given by appending column-wise (row-wise) the right (left) $(\sigma,\delta)$-confluent Vandermonde matrices of order $d$ on the ${\bf{a}_i}$'s, that is, 
$$V^{\sigma,\delta}_{(m_1, \dots ,m_k)} ({\bf{a_1}}, {\bf{a_2}}, \dots , {\bf{a_k}}) := (V_{m_1}^{\sigma,\delta}({\bf{a_1}})\ |\ V_{m_2}^{\sigma,\delta} ({\bf{a_2}})\ |\ \dots \ |\ V_{m_k}^{\sigma,\delta}({\bf{a_k}}))$$
$$\left(V^{\sigma,\delta}_{(m_1, \dots ,m_k),L} ({\bf{a_1}}, {\bf{a_2}}, \dots , {\bf{a_k}}) := \left(
\begin{matrix}
V_{m_1,L}^{\sigma,\delta}({\bf{a_1}})\\ 
V_{m_2,L}^{\sigma,\delta} ({\bf{a_2}})\\ 
\vdots \\ 
V_{m_k,L}^{\sigma,\delta}({\bf{a_k}})
\end{matrix}
\right)\ \right)\ .$$
\end{definition} 

\begin{remark}
The matrix $V^{\sigma,\delta}_{(o, \dots ,o)} ({\bf{a_1}}, \dots , {\bf{a_k}})$ of order $d$ coincides with the skew Vandermonde matrix $V^{\sigma,\delta}_{d} (\{ {\bf{a_1}}, \dots , {\bf{a_k}}\})$ introduced in \cite[Definition 40]{evaluationandinterpolation}.
\end{remark}

\smallskip

The following result is a direct consequence of Theorem \ref{theorem 3.1.7} and it extends \cite[Corollary 42]{evaluationandinterpolation}.

\medskip

\begin{corollary}\label{Vandermonde}
Let $\Omega=\{\bf{a_{1}},\bf{a_2}, \dots ,\bf{a_k}\} \subseteq \mathbb{F}^{n}$ be a finite set and let $m_1,\dots,m_k\in \mathcal{M}$. Putting $N:=\sum_{i=1}^{k}(\deg(m_i)+1)$, the following conditions are equivalent:
\begin{itemize}
\item[$1)$] $\Omega$ is left (right) DP-independent of type $(m_1, \dots ,m_k)$ ;
 \item[$2)$] given any finite set of $N$ values in $\mathbb{F}$
$$\{b_{j,o},b_{j,x_{j_1}},b_{j,x_{j_2}x_{j_1}},\dots,b_{j,m_j}:j=1,2,\dots,k\}\ ,$$ 
there exists a skew polynomial $F\in \mathcal{R}$ with $\deg(F)< N$ such that 
$$\qquad F({\bf{a_j}})=b_{j,o}, \Delta_{{\bf{a_j}}}^{x_{j_1}}F({\bf{a_j}})=b_{j,x_{j_1}},\Delta_{{\bf{a_j}}}^{x_{j_2}x_{j_1}}F({\bf{a_j}})=b_{j,x_{j_2}x_{j_1}}, \dots , \Delta_{{\bf{a_j}}}^{m_j}F({\bf{a_j}})=b_{j,m_j}$$ 
$$\qquad\left(F_L({\bf{a_j}})=b_{j,o}, \Delta_{{\bf{a_j}},L}^{x_{j_1}}F({\bf{a_j}})=b_{j,x_{j_1}},\Delta_{{\bf{a_j}},L}^{x_{j_2}x_{j_1}}F({\bf{a_j}})=b_{j,x_{j_2}x_{j_1}}, \dots , \Delta_{{\bf{a_j}},L}^{m_j}F({\bf{a_j}})=b_{j,m_j}\right)$$
for all $j=1,\dots,k$ ;
\item[$3)$] writing $r:=\frac{n^N-1}{n-1}=1+n+\dots+n^{N-1}$, for any 
$${\bf{b_j}}:=(b_{j,o},b_{j,x_{j_1}},b_{j,x_{j_2}x_{j_1}},\dots,b_{j,m_j})\in\mathbb{F}^{\deg (m_j)+1}$$ with $j=1, \dots ,k$, there exists a solution $(F_1,\dots,F_r)\in\mathbb{F}^{r}\ \left(\ {}^t(F_1,\dots,F_r)\in\mathbb{F}^{r\times 1}\ \right)$ to the linear system 
$$(z_1,\dots,z_r)\cdot V^{\sigma,\delta}_{(m_1, \dots ,m_k)} ({\bf{a_1}}, \dots , {\bf{a_k}})=(\bf{b_{1}},\dots ,\bf{b_k})$$
$$\left(V^{\sigma,\delta}_{(m_1, \dots ,m_k),L} ({\bf{a_1}}, \dots , {\bf{a_k}})\cdot {}^{t}(z_1,\dots,z_r)=
\left(
\begin{matrix}
{}^{t}\bf{b_{1}} \\
\vdots \\
{}^{t}\bf{b_k}
\end{matrix}
\right)\ \right) ,$$
where $(z_1,\dots,z_r)\ ({}^t(z_1,\dots,z_r))$ is a vector of variables in $\mathbb{F}^r \left(\ \mathbb{F}^{r\times 1}\ \right)$ and 
$$V^{\sigma,\delta}_{(m_1, \dots ,m_k)} ({\bf{a_1}}, \dots , {\bf{a_k}})\ \left(V^{\sigma,\delta}_{(m_1, \dots ,m_k),L} ({\bf{a_1}}, \dots , {\bf{a_k}})\right)$$
is the $r\times N\ \left(N\times r\right)$ right (left) $(\sigma,\delta)$-confluent Vandermonde matrix of order $N$ on the set $\{ {\bf{a_1}}, \dots , {\bf{a_k}}\}$.
\end{itemize}
\end{corollary}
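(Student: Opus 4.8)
The plan is to deduce the corollary directly from Theorem~\ref{theorem 3.1.7}, from the linearity of the right (left) $(\sigma,\delta)$-partial derivatives established in Lemma~\ref{Der1.5}, and from the fact that, by Definition~\ref{defR}, the set $\mathcal{M}$ of monomials is a left (right) $\mathbb{F}$-basis of $\mathcal{R}$. The equivalence $1)\Leftrightarrow 2)$ is precisely the equivalence of conditions $1)$ and $3)$ in Theorem~\ref{theorem 3.1.7}, so the only thing left to prove is $2)\Leftrightarrow 3)$; as elsewhere in the paper, I would give the argument for the right version, the left one being obtained by the usual notational changes (using $\prec_L$, writing scalars on the right, and reading the confluent Vandermonde matrix of Definition~\ref{DefVandermonde} row-wise).

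For $2)\Leftrightarrow 3)$ I would first enumerate as $m'_1,\dots,m'_r$ all monomials of $\mathcal{M}$ of degree $<N$, fixing the same ordering used in Definition~\ref{DefVandermonde}; since there are $n^i$ strings of length $i$ over an $n$-letter alphabet, one has $r=\sum_{i=0}^{N-1}n^{i}=\frac{n^N-1}{n-1}$ (read as $N$ when $n=1$). Because $\mathcal{M}$ is a left $\mathbb{F}$-basis of $\mathcal{R}$, a skew polynomial $F$ satisfies $\deg(F)<N$ if and only if $F=\sum_{l=1}^{r}z_l\,m'_l$ for a unique $(z_1,\dots,z_r)\in\mathbb{F}^{r}$. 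Applying Lemma~\ref{Der1.5}~$1)$ to each prefix $x_{j_i}\cdots x_{j_1}$ of $m_j$, I would get
$$\Delta_{{\bf a_j}}^{x_{j_i}\cdots x_{j_1}}F({\bf a_j})=\sum_{l=1}^{r}z_l\,\Delta_{{\bf a_j}}^{x_{j_i}\cdots x_{j_1}}m'_l({\bf a_j})\ ,$$
so that the $j$-th block $(F({\bf a_j}),\Delta_{{\bf a_j}}^{x_{j_1}}F({\bf a_j}),\dots,\Delta_{{\bf a_j}}^{m_j}F({\bf a_j}))$ equals $(z_1,\dots,z_r)\cdot V_{m_j}^{\sigma,\delta}({\bf a_j})$. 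Concatenating the blocks over $j=1,\dots,k$ and using the column-wise definition of $V^{\sigma,\delta}_{(m_1,\dots,m_k)}$ gives
$$(F({\bf a_1}),\dots,\Delta_{{\bf a_k}}^{m_k}F({\bf a_k}))=(z_1,\dots,z_r)\cdot V^{\sigma,\delta}_{(m_1,\dots,m_k)}({\bf a_1},\dots,{\bf a_k})\ .$$
Hence the existence of an $F$ with $\deg(F)<N$ realizing prescribed values $({\bf b_1},\dots,{\bf b_k})$ is equivalent to the solvability, in the variables $z_1,\dots,z_r$, of the linear system appearing in $3)$, which is exactly what is claimed.

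I do not expect a serious obstacle: the statement is essentially a reformulation of Theorem~\ref{theorem 3.1.7} in matrix language. The only point requiring a little care is to check that the assignment $F\mapsto(z_1,\dots,z_r)$ of left coordinates of a degree-$<N$ polynomial with respect to the chosen monomial basis is a well-defined $\mathbb{F}$-linear bijection onto $\mathbb{F}^{r}$ (immediate from Definition~\ref{defR}) and that, through this bijection, the evaluation/partial-derivative map of Theorem~\ref{theorem 3.1.7} corresponds exactly to right multiplication by the confluent Vandermonde matrix --- which is precisely what Lemma~\ref{Der1.5} and the computation above provide. For the left case one replaces Lemma~\ref{Der1.5}~$1)$ by Lemma~\ref{Der1.5}~$2)$, writes $F=\sum_l m'_l z_l$, and everything goes through verbatim with the matrix transposed; no noetherianity or finiteness assumption on $\mathbb{F}$ is needed.
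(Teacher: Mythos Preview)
Your proposal is correct and matches the paper's approach: the paper does not give an explicit proof, stating only that the corollary is ``a direct consequence of Theorem~\ref{theorem 3.1.7}'', and your argument spells out exactly this deduction, using Lemma~\ref{Der1.5} and Definition~\ref{DefVandermonde} to identify the interpolation conditions of Theorem~\ref{theorem 3.1.7} with the linear system in~$3)$.
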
 

\begin{remark}
In Corollary \ref{Vandermonde}, the right (left) $(\sigma,\delta)$-confluent Vandermonde matrix of order $N$ on the set $\{ {\bf{a_1}}, \dots , {\bf{a_k}}\}$ is of square form if and only if $n^N=(n-1)N+1$. This numerical condition is alway satisfied for all $N\in\mathbb{Z}_{\geq 1}$ when $n=1$, while for any fixed $n\geq 2$, it holds only for $N=0,1$. This shows that for $n\geq 2$ the matrix $V^{\sigma,\delta}_{(m_1, \dots ,m_k)} ({\bf{a_1}}, \dots , {\bf{a_k}})\ \left(V^{\sigma,\delta}_{(m_1, \dots ,m_k),L} ({\bf{a_1}}, \dots , {\bf{a_k}})\right)$ is not of square form, except for $N=1$. This fact implies that for $N>1$ one can only ensure the existence of skew interpolating polynomials, and not the uniqueness, except for the univariate case (i.e. when $n=1$).  
\end{remark}

\begin{remark}
In general, the skew Vandermonde matrices are useful tools to explicitly construct Lagrange (Hermite-type) skew interpolating polynomials, although the uniqueness of these polynomials cannot generally be guaranteed when $n\geq 2$ due to the non-square form of these matrices, as shown in the previous remark. More precisely, from Corollary \ref{Vandermonde}, it follows that if $(F_1,\dots,F_r)\in\mathbb{F}^{r}\ \left(\ {}^t(F_1,\dots,F_r)\in\mathbb{F}^{r\times 1}\ \right)$ is a solution to the linear system 
$$(z_1,z_2,\dots,z_r)\cdot 
(V_{m_1}^{\sigma,\delta}({\bf{a_1}})\ |\ V_{m_2}^{\sigma,\delta} ({\bf{a_2}})\ |\ \dots \ |\ V_{m_k}^{\sigma,\delta}({\bf{a_k}}))
=(\bf{b_{1}},\dots ,\bf{b_k})$$
$$\left(\
\left(
\begin{matrix}
V_{m_1,L}^{\sigma,\delta}({\bf{a_1}})\\ 
V_{m_2,L}^{\sigma,\delta} ({\bf{a_2}})\\ 
\vdots \\ 
V_{m_k,L}^{\sigma,\delta}({\bf{a_k}})
\end{matrix}
\right)
\cdot 
\left(
\begin{matrix}
z_1 \\
z_2 \\
\vdots \\
z_r
\end{matrix}
\right)
=
\left(
\begin{matrix}
{}^{t}\bf{b_{1}} \\
\vdots \\
{}^{t}\bf{b_k}
\end{matrix}
\right)\ \right) ,$$ 
where $\{ m'_1,\dots,m'_r\}$ is an ordered set of all distinct monomials $m'_i$ in $\mathcal{M}$ of degree less than $\sum_{i=1}^{k}(\deg(m_i)+1)$ as in Definition \ref{DefVandermonde}, then a skew interpolating polynomial $F\in \mathcal{R}$ with $\deg(F)< \sum_{i=1}^{k}(\deg(m_i)+1)$ and such that 
$$\qquad F({\bf{a_j}})=b_{j,o}, \Delta_{{\bf{a_j}}}^{x_{j_1}}F({\bf{a_j}})=b_{j,x_{j_1}},\Delta_{{\bf{a_j}}}^{x_{j_2}x_{j_1}}F({\bf{a_j}})=b_{j,x_{j_2}x_{j_1}}, \dots , \Delta_{{\bf{a_j}}}^{m_j}F({\bf{a_j}})=b_{j,m_j}$$ 
$$\qquad\left(F_L({\bf{a_j}})=b_{j,o}, \Delta_{{\bf{a_j}},L}^{x_{j_1}}F({\bf{a_j}})=b_{j,x_{j_1}},\Delta_{{\bf{a_j}},L}^{x_{j_2}x_{j_1}}F({\bf{a_j}})=b_{j,x_{j_2}x_{j_1}}, \dots , \Delta_{{\bf{a_j}},L}^{m_j}F({\bf{a_j}})=b_{j,m_j}\right)$$
for all $j=1,\dots,k$, is given by
$F=\sum_{i=1}^{r}F_{i}\ m'_{i} \ \ \ \left(\ F=\sum_{i=1}^{r}m'_{i}\ {F}_{i}\ \right)\ .$
\end{remark}

\bigskip

\noindent {\bf Acknowledgement.} This work was partially supported by the Proyecto VRID N. 219.015.023-INV of the University of Concepci\'on.

\bigskip


\begin{thebibliography}{0}
\bibitem{resultantepaper}
A.E. Almendras Valdebenito, J.A. Briones Donoso, A.L. Tironi, \textit{Resultants of skew polynomials over division rings}, ArXiv preprint arXiv: 2109.13092  (2021).
\bibitem{BGGRU}
D. Boucher, P. Gaborit, W. Geiselmann, O. Ruatta, F. Ulmer, \textit{Key exchange and encryption schemes based on non-commutative skew polynomials}, Post-quantum cryptography, 126--141, Lecture Notes in Comput. Sci. {\bf 6061}, Springer, Berlin, 2010.
\bibitem{BGU}
D. Boucher, W. Geiselmann, F. Ulmer, \textit{Skew-cyclic codes},
Appl. Algebra Engrg. Comm. Comput. {\bf 18} (2007), no. 4, 379--389.
\bibitem{CohnFree} 
P.M. Cohn, \textit{Free Rings and Their Relations}, London Math. Soc. Monographs {\bf 2}, Academic Press, London-New York, 1971.
\bibitem{Er1}
A.Lj. Eri\'c, \textit{Polynomial interpolation problem for skew polynomials}, Appl. Anal. Discrete Math. {\bf 1} (2) (2007), 403--414.
\bibitem{GU} W. Geiselmann, F. Ulmer, \textit{Skew Reed Muller codes},
Rings, Modules and Codes, Contemporary Mathematics {\bf 727}, AMS, 107--116, 2019.
\bibitem{Jaco}
N. Jacobson, \textit{Non-commutative polynomials and cyclic algebras}, Ann. of Math. (2) {\bf 35} (1934), no. 2, 197--208.
\bibitem{Jate1}
A.V. Jategaonkar, \textit{A counter-example in ring theory and homological algebra}, J. Algebra {\bf 12} (3) (1969), 418--440.
\bibitem{Jate2}
A.V. Jategaonkar, \textit{Skew polynomial rings over semisimple rings}, J. Algebra {\bf 19} (1971), 315--328.
\bibitem{Lam}
T.Y. Lam, \textit{A general theory of Vandermonde matrices}, Exposition. Math. {\bf 4} (1986), 193--215.
\bibitem{LL}
T.Y. Lam, A. Leroy, \textit{Vandermonde and Wronskian matrices over division rings}, J. Algebra {\bf 119} (2) (1988), 308--336.
\bibitem{Len}
H.W. Lenstra, \textit{Lectures on Euclidean Rings}, Universit\"at Bielefeld, 1974.
\bibitem{LMK}
S. Liu, F. Manganiello, Frank R. Kschischang, \textit{K\"otter interpolation in skew polynomial rings}, Des. Codes Cryptogr. {\bf 72} (3) (2014), 593--608.
\bibitem{Maj} 
S. Majid, \textit{What is... a quantum group?}, Notices Amer. Math. Soc. {\bf 53} (2006), no. 1, 30--31. 
\bibitem{evaluationandinterpolation} 
U. Mart\'inez-Pe\~nas, Frank R. Kschischang, \textit{Evaluation and interpolation over multivariate skew polynomial rings}, J. Algebra {\bf{525}} (2019), 111--139. 
\bibitem{zerosmartinez} 
U. Mart\'inez-Pe\~nas, \textit{Zeros with multiplicity, Hasse derivatives and linear factors of general skew polynomials}, ArXiv preprint arXiv: 2103.07239 (2021).
\bibitem{Ore}
O. Ore, \textit{Theory of non-commutative polynomials}, Ann. of Math. (2) {\bf{34}} (1933), no. 3, 480--508.
\bibitem{Zhang}
Y. Zhang, \textit{A secret sharing scheme via skew polynomials}, in Proc. 2010 International Conference on Computational Science and Its Applications, 2010, 33--38.
\end{thebibliography}
\end{document}